\newtheorem{theorem}{Theorem}[section]
\newtheorem{corollary}[theorem]{Corollary}
\newtheorem{proposition}[theorem]{Proposition}
\newtheorem{lemma}[theorem]{Lemma}
\theoremstyle{definition}    
\newtheorem{definition}[theorem]{Definition}
\theoremstyle{remark}
\newtheorem{remark}[theorem]{Remark}
\newtheorem{example}[theorem]{Example}
\newtheorem{examples}[theorem]{Examples}
\renewcommand{\L}{\mathcal{L}}
\renewcommand{\O}{\mathcal{O}}
\newcommand{\Co}{\mathcal{C}}
\newcommand{\ca}{\mathcal}
\newcommand{\U}{\on{U}}
\newcommand{\N}{\mathbb{N}}
\newcommand{\R}{\mathbb{R}}
\newcommand{\C}{\mathbb{C}}
\newcommand{\SU}{\on{SU}}
\newcommand{\Z}{\mathbb{Z}}
\renewcommand{\gg}{\mathsf{g}}
\newcommand\lie[1]{\mathfrak{#1}}
\newcommand{\g}{\lie{g}}
\renewcommand{\t}{\lie{t}}
\newcommand{\Alc}{\lie{A}}
\renewcommand{\subset}{\subseteq}
\newcommand{\on}{\operatorname}
\newcommand{\Ad}{ \on{Ad} }
\newcommand{\Hom}{ \on{Hom}}
\renewcommand{\ker}{ \on{ker}}
\newcommand{\SO}{ \on{SO}}
\newcommand{\tpi}{{2\pi i}}
\newcommand\qu{/\kern-.7ex/} % Categorical quotients
\newcommand{\hra}{\hookrightarrow}
\renewcommand{\d}{{\mbox{d}}}
\newcommand\Phinv{\Phi^{-1}}
\newcommand\eps{\epsilon}
\newcommand\Om{\Omega}
\newcommand\om{\omega}
\newcommand{\f}{\frac}
\renewcommand{\l}{\langle}
\renewcommand{\r}{\rangle}
\newcommand\hh{{\f{1}{2}}}
\newcommand{\eeq}{\end{eqnarray*}}
\newcommand{\beq}{\begin{eqnarray*}}
\newcommand{\pr}{\on{pr}}
\newcommand{\wh}{\widehat}
\newcommand{\wt}{\widetilde}
\newcommand{\mf}{\mathfrak}
\newcommand{\cox}{h^\vee}
\newcommand\dirac{/\kern-1.2ex\partial} % Dirac operator
\begin{document}
\sloppy
\title{Verlinde formulas for nonsimply connected groups}
\author{Eckhard Meinrenken}
\address{Mathematics Department, University of Toronto, 40 St George Street, Toronto Ontario M6S3W6}
\begin{abstract}
In 1999, Fuchs and Schweigert proposed formulas of Verlinde type for moduli spaces of 
surface group representations in  compact nonsimply connected Lie groups. In this paper, we will prove a symplectic version of their conjecture for 
surfaces with at most one boundary component. A key tool in our computations is Kostant's notion of a \emph{maximal torus in apposition}. 
\end{abstract}
\maketitle

\begin{quote}
  {\it \small To the memory of Bert Kostant.}
\end{quote}
\vskip1cm

\section{Introduction}
Let $G$ be a compact, simple, simply connected Lie group, $\Sigma_\gg$ a compact oriented surface of genus $\gg$ without boundary, and 
\begin{equation}\label{eq:space1}
 M_G(\Sigma_\gg)=\Hom(\pi_1(\Sigma_\gg),G)/G
 \end{equation}
the representation variety. Choosing standard generators of the fundamental group, 
we have $M_G(\Sigma_\gg)=\Phinv(e)/G$, where 
\begin{equation}\label{eq:space1b}
\Phi\colon G^{2\gg}\to G,\ \ (a_1,b_1,\ldots,a_\gg,b_\gg)\mapsto  \prod_{i=1}^\gg a_ib_ia_i^{-1}b_i^{-1}\ .
 \end{equation}
By interpreting \eqref{eq:space1} as a moduli space of flat $G$-bundles over $\Sigma_\gg$ modulo gauge transformations, Atiyah-Bott \cite{at:mo} defined a symplectic structure on the  smooth part of $M_G(\Sigma_\gg)$, with 2-form depending on the choice of an invariant inner product $B$ on the Lie algebra. (Taking into account the singularities, $M_G(\Sigma_\gg)$ is a stratified symplectic space in the sense of \cite{sj:st}; for the purpose of this introduction we will ignore the singularities.) If $B$ is a $k$-th multiple of the \emph{basic inner product} on $\g$, for some integral level $k\in\N$,  
then $M_G(\Sigma_\gg)$ acquires a prequantum line bundle $L$. After suitable desingularization, one can define a quantization $\ca{Q}(M_G(\Sigma))\in \Z$ 
as the index of a spin-c Dirac operator with coefficients in $L$. This index is given by the (symplectic) \emph{Verlinde formula}
\begin{equation}\label{eq:verlinde}
 \ca{Q}(M_G(\Sigma_\gg))=\sum_{\lambda\in P_k} S_{0,\lambda}^{2-2\gg},\end{equation}
where $P_k$ is the finite set of  level $k$ weights of $G$ and $S_{0,\lambda}$ are components of the $S$-matrix. (The relevant notation is explained in Section \ref{sec:prel} below.) The numbers \eqref{eq:verlinde} are known to compute 
the dimension of `space of conformal blocks' \cite{ts:cf} or `generalized theta functions' \cite{fa:vl}, see \cite{be:co} for background and references; this dimension formula is equivalent to the  index formula in the presence of suitable vanishing theorems \cite{te:bo}.

Let $Z\subset Z(G)$ be a finite subgroup of the center, and $G'=G/Z$ the resulting nonsimply connected simple Lie group. Fuchs-Schweigert \cite{fu:ac} proposed a generalization of \eqref{eq:verlinde} to the space \[ M_{G'}(\Sigma_\gg)=\Hom(\pi_1(\Sigma_\gg),G')/G'.\] 
In general, this space has several connected 
components $M_{G'}(\Sigma_\gg)_{(c)}$ indexed by the elements $c\in Z$. 
Fuchs-Schweigert suggested that the resulting formula would involve the natural action 
$\bullet_k$ of the center $Z(G)$ on $P_k$, and e.g. for $c=e$ is of the form
\begin{equation}\label{eq:fuchs}
\ca{Q}\big(M_{G'}(\Sigma_\gg)_{(e)}\big)=\f{1}{\# Z^{2\gg}}
\sum_{c_1,\cdots c_{2\gg}\in Z} \varepsilon(c_1,\ldots,c_{2\gg}) \sum_{\stackrel{\lambda\in P_k}{c_i\bullet_k \lambda=\lambda}} S_{0,\lambda}^{2-2\gg}\ \end{equation}
for suitable phase factors $\varepsilon(c_1,\ldots,c_{2\gg})\in \U(1)$. Their paper is not 
specific on the precise conditions for prequantizability, or how to compute these phase factors. More general formulas of a similar kind are conjectured for surfaces with 
boundary, with prescribed holonomies around the boundary components.  Some special cases of \eqref{eq:fuchs} were already known in the algebro-geometric context: Pantev \cite{pa:cm} obtained the Verlinde numbers for $G'=\SO(3)$, and  Beauville \cite{be:ve} for $G'=\on{PU}(n)$ for $n$ prime. In the symplectic framework, the case of $G'=\SO(3)$ had been worked out for an arbitrary number of boundary components in \cite{kre:ver, me:su}. 

In this article, we will prove the symplectic version of the  Fuchs-Schweigert conjectures
for arbitrary compact, simple $G'$, for surfaces with at most one boundary component. The case with several boundary components is more difficult, and we hope to return  to it elsewhere. Similar to the argument in \cite{al:fi} towards a proof of \eqref{eq:verlinde} as a fixed point formula, we will obtain these results by reduction from a 
suitable quasi-Hamiltonian $G$-space \cite{al:mom} $(M,\omega,\Phi)$ with 
$G$-valued moment map $\Phi\colon M\to G$.  Given a prequantization of such a space, at integral level $k$, its quantization is an element 
\[ \ca{Q}(M)\in R_k(G)\]
of the level $k$ fusion ring (Verlinde algebra). By the `quantization commutes with 
reduction principle', the multiplicity of the zero weight computes the quantization of the symplectic quotient $\ca{Q}(M\qu G)\in \Z$. On the other hand, the quantization of $M$ is determined by its values $\ca{Q}(M)(t)\in \C$ at a certain finite collection of regular elements of the maximal torus $T$, which in turn are computed by a localization formula 
as a sum of contributions from the fixed point manifolds $F\subset M$ of $t$.

For the case at hand, $M_{G'}(\Sigma_\gg)_{(e)}$ will be obtained by reduction from $M\cong (G')^{2\gg}$, with $G$-action by conjugation on each factor, and with 
$G$-valued moment map a lift of the product of Lie group commutators.
By a result of Krepski \cite{kre:pr} (which we will re-prove in this paper),  $M$ is prequantizable if and only if the bilinear form $B$ is integer-valued on the lattice $\Lambda_Z:=\exp_T^{-1}(Z)\subset \t$. If $Z=\{1\}$, the fixed point manifolds are simply $T^{2\gg}\subset G^{2\gg}$,  and the evaluation of the fixed point contributions is straightforward \cite{al:fi}. For general $Z$, the fixed point manifolds of $t$ may be disconnected, and the evaluation of the fixed point data becomes much more involved. The main difficulty is the computation of the phase factors: since the stabilizer groups of the new fixed point components are disconnected, the phase factors are not directly determined by the values 
of the moment map.  

Our main tool for resolving these difficulties is Kostant's notion of a \emph{maximal torus in apposition}, introduced in his famous 1959 paper, \emph{The Principal Three-Dimensional Subgroup and the Betti Numbers of a Complex Simple Lie Group} \cite{ko:tds}. In this work, Kostant shows that among regular elements of the maximal torus $T$, there is a unique $W$-conjugacy class of elements $t_*$ such that the 
adjoint action of $t_*$ has minimal order, and this order coincides with the order of a Coxeter element of the Weyl group. To explain this coincidence, he constructs a second maximal torus $U$ that is normalized by the conjugation action of $t_*$, and on which $\Ad_{t_*}$  acts as a Coxeter transformation. For our application to moduli spaces, we will use this second torus $U$ to relate the various fixed point manifolds of the regular element $t$, using the observation that $U'=U/Z$ meets every component of the fixed point set of $\Ad_t$ on $G'$. In this way, we are able to study the prequantizations of $(G')^{2\gg}$, and evaluate the fixed point data, in terms of their restrictions to $(U')^{2\gg}\subset (G')^{2\gg}$. In particular, we obtain an explicit formula for the phase factors $\varepsilon(c_1,\ldots,c_{2\gg})$ in terms of the Coxeter transformation.

\bigskip 
\noindent{\bf Acknowledgements.}  This project started out many years ago as a collaboration with Anton Alekseev and Chris Woodward. Following our approach \cite{al:fi} to Verlinde formulas as fixed point formulas, we had computed the quantization of moduli spaces for nonsimply connected groups, but only for specific levels $k$ that were not expected to be optimal.  Section \ref{subsec:fixed} of this paper will use some material from our unfinished manuscript \cite{al:ve}, and I thank my coauthors for letting me reproduce it here.  I also thank the referee for helpful suggestions.

\section{Lie-theoretic preliminaries}\label{sec:prel}
\subsection{Notation}
Throughout this section, $G$ will denote a compact, simple, simply connected Lie group of rank $l$, with center $Z(G)$ and maximal torus $T$. Let 
$\g,\t$ be their Lie algebras.  We denote by $Q^\vee \subset P^\vee \subset \t$ the coroot and coweight lattices, and by $Q\subset P\subset \t^*$ the root and weight lattices so that  $Q=\Hom(P^\vee,\Z),\  P=\Hom(Q^\vee,\Z)$.
(Note that we are working with `real' weights as in \cite[p.~185]{br:rep}.) Then $Q^\vee$ is the integral lattice of $G$, while $P^\vee$ is the integral lattice of the adjoint group 
$G/Z(G)$:
\begin{equation} Q^\vee=\exp_T^{-1}(\{e\}), \ \ \ P^\vee=\exp_T^{-1}(Z(G)) .\end{equation} 
The coroot of any root  $\alpha$ is denoted $\alpha^\vee$. Let 
\begin{equation}
\{\alpha_1,\ldots,\alpha_l\}\subset Q,\ \ \ 
\{\alpha_1^\vee,\ldots,\alpha_l^\vee\}\subset Q^\vee.\end{equation} 
be the $\Z$-bases of the two lattices given by a choice of simple roots and coroots, 
with dual bases the fundamental coweights and weights,
\begin{equation}
\{\varpi_1^\vee,\ldots,\varpi_l^\vee\}\subset P^\vee,\ \ \{\varpi_1,\ldots,\varpi_l\}\subset P. 
\end{equation}
We put $\rho=\sum_{i=1}^l \varpi_i\in P,\ \ \rho^\vee=\sum_{i=1}^l \varpi_i^\vee\in P^\vee$. The highest root is denoted $\theta$; 
the Coxeter number and the dual Coxeter number are given by 
\begin{equation}\label{eq:coxeter}
 h=1+\l\theta,\rho^\vee\r,\ \ \ h^\vee=1+\l\rho,\theta^\vee\r.
\end{equation}
The \emph{basic inner product} $B_{\on{basic}}$ on $\g$ is the unique invariant inner product such that $B_{\on{basic}}(\theta^\vee,\theta^\vee)=2$. It has the property that 
\begin{equation}\label{eq:basicfact}
B_{\on{basic}}(Q^\vee,P^\vee)\subset \Z.
\end{equation}
Put differently, the  isomorphism $\t\to \t^*$ given by the basic inner product takes $Q^\vee$ into $Q$ and $P^\vee$ into $P$.
%The length-square 
%any element in the coroot lattice is then even; hence 
%
%\begin{equation}\label{eq:pairing}B_{\on{basic}}(\alpha_i^\vee,\varpi_j^\vee)=\f{1}{2} B_{\on{basic}}(\alpha_i^\vee,\alpha_i^\vee)\delta_{ij}\in\Z;\end{equation}

We denote by $\t_+\subset \t$ the fundamental Weyl chamber, defined by the inequalities $\l\alpha_i,\xi\r\ge 0,\ i=1,\ldots,l$, and by 
\begin{equation}
\Alc=\{\xi\in\t_+|\ \l\theta,\xi\r\le 1\}
\end{equation} 
the fundamental Weyl alcove. The 
maps
\[ \t_+\to \g/\Ad(G),\ \xi\mapsto G.\xi,\ \ \ \ \ \ \Alc\to G/\Ad(G),\ \xi\mapsto G.\exp(\xi)\] are bijections, identifying $\t_+$ and $\Alc$ with the 
set of adjoint orbits in $\g$ and in $G$, respectively.

\subsection{Coxeter elements}
Let $W=N(T)/T$ be the Weyl group. It acts as a group of reflections on $\t$, 
with $\t_+$ as a fundamental domain. An element ${w_*}\in W$  is called a  \emph{Coxeter element} if it is $W$-conjugate to the product $s_l\cdots s_1$ of simple reflections. The Coxeter number \eqref{eq:coxeter} is the order of such a Coxeter element:
\[ \on{ord}({w_*})=h.\]
One of the key properties of the Coxeter element is that its action on 
$\t$ has trivial fixed point set. More precisely, there is the following statement.
\begin{proposition} \cite[Theorem 1.6]{dwy:ce}  \label{prop:dwyer}
The transformation $1-{w_*}$ of $\t$ 
restricts to an isomorphism from the coweight lattice to the coroot lattice: 
\begin{equation}\label{eq:important}
 (1-{w_*})P^\vee=Q^\vee.\end{equation}
Equivalently, the fixed point set of the action of a Coxeter element ${w_*}\in W$ on the maximal torus $T$ 
is exactly the center:
\begin{equation}
T^{w_*}=Z(G).
\end{equation} 
\end{proposition}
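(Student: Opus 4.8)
The plan is to prove the lattice identity \eqref{eq:important} first and then read off the statement about $T^{w_*}$ as a formality. Since $\exp_T^{-1}(\{e\})=Q^\vee$ we may identify $T=\t/Q^\vee$, so a point $t=\exp_T(\xi)$ is fixed by ${w_*}$ exactly when $(1-{w_*})\xi\in Q^\vee$. Granting \eqref{eq:important} (and in particular the invertibility of $1-{w_*}$), this condition becomes $\xi\in(1-{w_*})^{-1}Q^\vee=P^\vee$, i.e. $t\in\exp_T(P^\vee)=Z(G)$. Thus the whole content is concentrated in \eqref{eq:important}.

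For the lattice identity I would first dispose of the easy inclusion $(1-{w_*})P^\vee\subseteq Q^\vee$. Indeed, for a simple reflection one has $s_i\mu-\mu=-\l\alpha_i,\mu\r\,\alpha_i^\vee\in Q^\vee$, so $W$ acts trivially on the finite group $P^\vee/Q^\vee\cong Z(G)$; hence $(1-w)\mu\in Q^\vee$ for every $w\in W$ and $\mu\in P^\vee$. To upgrade this to equality I would run a counting argument whose single nontrivial input is $|\det(1-{w_*})|=f$, where $f=\#Z(G)=[P^\vee:Q^\vee]$ (this also forces $1-{w_*}$ to be invertible). Granting that, the chain $(1-{w_*})Q^\vee\subseteq(1-{w_*})P^\vee\subseteq Q^\vee$ together with $[(1-{w_*})P^\vee:(1-{w_*})Q^\vee]=[P^\vee:Q^\vee]=f$ and $[Q^\vee:(1-{w_*})Q^\vee]=|\det(1-{w_*})|=f$ forces $(1-{w_*})P^\vee=Q^\vee$.

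The crux, and the step I expect to be the genuine obstacle, is therefore the determinant computation $|\det(1-{w_*})|=\#Z(G)$. One sees heuristically why a clean answer should exist: the eigenvalues of ${w_*}$ on $\t$ are the roots of unity $e^{2\pi i m_j/h}$ attached to the exponents $1\le m_j\le h-1$, so $\det(1-{w_*})=\prod_j(1-e^{2\pi i m_j/h})>0$; but extracting the value $f$ from this product uniformly across all types is awkward. I would instead use an elementary factorization. Since all Coxeter elements are conjugate and both the determinant and the cokernel of $1-{w_*}$ are conjugacy invariants, I may take ${w_*}=s_l\cdots s_1$. Writing each reflection on $\t$ as $s_i=1-\alpha_i^\vee\otimes\alpha_i$ and telescoping, $1-{w_*}=\sum_{k=1}^l (s_l\cdots s_{k+1})(1-s_k)$, one computes $(1-{w_*})\xi=\sum_{k}\l\alpha_k,\xi\r\,\beta_k^\vee$ with $\beta_k^\vee=(s_l\cdots s_{k+1})\alpha_k^\vee$; since none of $s_{k+1},\dots,s_l$ equals $s_k$, this $\beta_k^\vee$ is $\alpha_k^\vee$ plus a $\Z$-combination of the $\alpha_j^\vee$ with $j>k$. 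In the basis $\{\alpha_i^\vee\}$ of $Q^\vee$ this reads $1-{w_*}=LC$, where $C=\big(\l\alpha_k,\alpha_i^\vee\r\big)$ is the Cartan matrix and $L$ is unipotent triangular with $\det L=1$.

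With this factorization the computation is finished: $\det(1-{w_*})=\det C$, and since $C$ is precisely the matrix of the inclusion $Q^\vee\hookrightarrow P^\vee$ in the bases $\{\alpha_i^\vee\}$ and $\{\varpi_i^\vee\}$ (because $\alpha_i^\vee=\sum_k\l\alpha_k,\alpha_i^\vee\r\,\varpi_k^\vee$), its cokernel is $P^\vee/Q^\vee\cong Z(G)$ of order $f$. Hence $|\det(1-{w_*})|=f\neq 0$, which simultaneously supplies the invertibility of $1-{w_*}$ used above and closes the counting argument, yielding \eqref{eq:important} and therefore $T^{w_*}=Z(G)$. In fact, because $L\in\GL_l(\Z)$, the same factorization identifies the full cokernel $Q^\vee/(1-{w_*})Q^\vee\cong Z(G)$, a slightly stronger statement than the bare determinant that I would record along the way.
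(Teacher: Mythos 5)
Your proof is correct, and there is in fact nothing internal to compare it against: the paper does not prove this proposition at all, but imports it verbatim from Dwyer--Wilkerson \cite[Theorem 1.6]{dwy:ce}. So your argument supplies a self-contained proof where the paper has only a citation. Every step checks out: the equivalence of the two formulations via $T=\t/Q^\vee$ (using, as you note, the injectivity of $1-w_*$, which your determinant computation supplies); the inclusion $(1-w_*)P^\vee\subseteq Q^\vee$ from triviality of the $W$-action on $P^\vee/Q^\vee$; the telescoping identity $1-w_*=\sum_{k=1}^l(s_l\cdots s_{k+1})(1-s_k)$, which is an exact algebraic identity; the unitriangularity of $L$, valid because each $s_j$ with $j>k$ only adds integer multiples of $\alpha_j^\vee$ and so leaves the $\alpha_k^\vee$-coefficient of $\beta_k^\vee$ equal to $1$; the identification of $\det C$ with $[P^\vee:Q^\vee]=\#Z(G)$ via $\alpha_i^\vee=\sum_k\l\alpha_k,\alpha_i^\vee\r\varpi_k^\vee$; and the index bookkeeping in the chain $(1-w_*)Q^\vee\subseteq(1-w_*)P^\vee\subseteq Q^\vee$, which forces $[Q^\vee:(1-w_*)P^\vee]=1$. (Reducing to $w_*=s_l\cdots s_1$ is harmless here, and with the paper's definition of Coxeter element it is immediate.) Your factorization $1-w_*=LC$ is the classical device, going back to Kostant and Steinberg, for showing that a Coxeter element acts without nonzero fixed vectors; what your route buys beyond the bare citation is precisely the refinement you record at the end, $Q^\vee/(1-w_*)Q^\vee\cong P^\vee/Q^\vee\cong Z(G)$, which is the statement the paper actually leans on in the very next proposition, where the exact sequence $1\to Z(G)/Z\to (T')^{w_*}\to Z\to 1$ is extracted from the isomorphism $(1-w_*)^{-1}\Lambda_Z/\Lambda_Z\cong\Lambda_Z/(1-w_*)\Lambda_Z$ together with $(1-w_*)P^\vee=Q^\vee$. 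The citation to \cite{dwy:ce} buys brevity and a more general, case-free framework; your argument buys elementarity and keeps the logical dependencies of Section 2 entirely within linear algebra over $\Z$ and the Cartan matrix.
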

%Equation \eqref{eq:important} implies that $  \det(1-w_*)=\# (P^\vee/Q^\vee)=\# Z(G)$. 
More generally, we are interested in the fixed point set of $w_*$ on the maximal torus $T'=T/Z$ of $G'=G/Z$, for any subgroup 
 $Z\subset Z(G)$. Let $\Lambda_Z=\exp_T^{-1}(Z)$, so that $\Lambda_Z$ is the 
 integral lattice of $G'$. 
\begin{proposition}
The fixed point set of the action of a Coxeter element ${w_*}\in W$ on  $T'=T/Z$ is 
the finite subgroup 
\begin{equation}\label{eq:fixed2} (T')^{w_*}=((1-{w_*})^{-1}\Lambda_Z)/\Lambda_Z.\end{equation}
It fits into an exact sequence of groups
\begin{equation}\label{eq:exactsequence}
1\to Z(G)/Z\to (T')^{w_*}\to Z\to 1;
\end{equation}
in particular, $\# (T')^{w_*}=\# Z(G)$. 
\end{proposition}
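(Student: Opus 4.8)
The plan is to lift everything to the Lie algebra and argue entirely with lattices. Write $T=\t/Q^\vee$, so that $T'=T/Z=\t/\Lam_Z$, and recall the inclusions $Q^\vee\subset\Lam_Z\subset P^\vee$ coming from $Z\subset Z(G)$ together with $P^\vee=\exp_T^{-1}(Z(G))$. The first point to establish is that $w_*$ descends to an automorphism of $T'$. Since the Weyl group fixes the center $Z(G)=P^\vee/Q^\vee$ pointwise, $w_*$ acts trivially on $P^\vee/Q^\vee$; hence it preserves each coset of $Q^\vee$ in $P^\vee$, and in particular stabilizes the intermediate lattice $\Lam_Z$. Thus $w_*$ acts on $T'=\t/\Lam_Z$, and moreover $(1-w_*)\Lam_Z\subset\Lam_Z$.

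Next I would compute the fixed point set directly. An element $\xi+\Lam_Z\in T'$ is $w_*$-fixed exactly when $(1-w_*)\xi\in\Lam_Z$. Because a Coxeter element has no eigenvalue $1$ on $\t$ --- equivalently, by Proposition \ref{prop:dwyer} the fixed set $T^{w_*}=Z(G)$ is finite, so $1-w_*$ is invertible --- this condition is $\xi\in(1-w_*)^{-1}\Lam_Z$, and we obtain the asserted formula $(T')^{w_*}=\big((1-w_*)^{-1}\Lam_Z\big)/\Lam_Z$, a finite abelian group by the inclusion $\Lam_Z\subset(1-w_*)^{-1}\Lam_Z$ noted in the previous step.

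The order and the exact sequence are then both driven by the key identity $(1-w_*)P^\vee=Q^\vee$ of Proposition \ref{prop:dwyer}. For the order, applying the invertible map $1-w_*$ to both lattices gives $\#(T')^{w_*}=[\,(1-w_*)^{-1}\Lam_Z:\Lam_Z\,]=[\,\Lam_Z:(1-w_*)\Lam_Z\,]$; this last index equals $|\det(1-w_*)|$, which is independent of the particular full lattice stabilized by $1-w_*$, so I may evaluate it on $P^\vee$ to get $[\,P^\vee:(1-w_*)P^\vee\,]=[\,P^\vee:Q^\vee\,]=\#Z(G)$. For the exact sequence, the automorphism $1-w_*$ induces an isomorphism $(T')^{w_*}\cong\Lam_Z/(1-w_*)\Lam_Z$. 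From $\Lam_Z\subset P^\vee$ we get $(1-w_*)\Lam_Z\subset(1-w_*)P^\vee=Q^\vee\subset\Lam_Z$, so the natural projection $\Lam_Z/(1-w_*)\Lam_Z\to\Lam_Z/Q^\vee=Z$ is well defined and surjective, realizing the map $(T')^{w_*}\to Z$, with kernel $Q^\vee/(1-w_*)\Lam_Z$. Finally I would identify this kernel with $Z(G)/Z$: under $\exp_T$ the latter is $P^\vee/\Lam_Z\subset(T')^{w_*}$ (it lies in the fixed set since $(1-w_*)P^\vee=Q^\vee\subset\Lam_Z$), and the isomorphism $1-w_*$ carries $P^\vee/\Lam_Z$ onto $(1-w_*)P^\vee/(1-w_*)\Lam_Z=Q^\vee/(1-w_*)\Lam_Z$, exactly the kernel. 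This yields \eqref{eq:exactsequence}, and the count $\#(T')^{w_*}=\#Z(G)$ is recovered as $\#(Z(G)/Z)\cdot\#Z$.

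The one genuinely delicate point I anticipate is the bookkeeping in this last step: verifying that the copy of $Z(G)/Z$ sitting inside $(T')^{w_*}$ as $P^\vee/\Lam_Z$ is carried by $1-w_*$ precisely onto the kernel of the projection to $Z$, so that the sequence is exact and not merely balanced on orders. Everything else --- the invertibility of $1-w_*$ and the evaluation of its determinant as $\#Z(G)$ --- is an immediate consequence of Proposition \ref{prop:dwyer}.
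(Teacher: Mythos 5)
Your proof is correct and follows essentially the same route as the paper: both identify $(T')^{w_*}$ with $(1-w_*)^{-1}\Lambda_Z/\Lambda_Z$ by the same lattice computation, transport it via $1-w_*$ to $\Lambda_Z/(1-w_*)\Lambda_Z$, and obtain the exact sequence from the projection to $\Lambda_Z/(1-w_*)P^\vee=\Lambda_Z/Q^\vee\cong Z$ with kernel $(1-w_*)^{-1}Q^\vee/\Lambda_Z=P^\vee/\Lambda_Z\cong Z(G)/Z$, all resting on the key identity $(1-w_*)P^\vee=Q^\vee$ of Proposition \ref{prop:dwyer}. Your two small additions --- checking explicitly that $w_*$ stabilizes $\Lambda_Z$, and recovering the order independently as $|\det(1-w_*)|=[P^\vee:Q^\vee]$ --- are sound but inessential, since the paper reads the count off the exact sequence.
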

\begin{proof}
% Note\begin{equation}(1-w_*)\Lambda_Z\subset Q^\vee\subset \Lambda_Z\subset P^\vee\subset(1-w_*)^{-1}\Lambda_Z.\end{equation}
%
An element $\xi\in \t$ satisfies $w_*\exp_{T'}\xi=\exp_{T'}\xi$  if and only if $(1-{w_*})\xi\in \Lambda_Z$, i.e.,  
$\xi\in (1-w_*)^{-1}\Lambda_Z$.  This proves \eqref{eq:fixed2}. The inclusion 
 $P^\vee =(1-w_*)^{-1} Q^\vee \hra (1-w_*)^{-1}\Lambda_Z$ defines an 
injective map $Z(G)/Z\hra (T')^{w_*}$. On the other hand, we have a surjective map 
$(T')^{w_*}\to Z$  induced by 
\begin{align*}
(1-{w_*})^{-1}\Lambda_Z/\Lambda_Z &\cong \Lambda_Z/(1-w_*)\Lambda_Z \\
& \to  \Lambda_Z/(1-w_*)P^\vee=
\Lambda_Z/Q^\vee\cong Z
\end{align*}
with kernel  $(1-{w_*})^{-1}Q^\vee/\Lambda_Z=P^\vee/Z=Z(G)/Z$. This establishes exactness of \eqref{eq:exactsequence}. 
\end{proof}

\subsection{Kostant's maximal torus `in apposition'}\label{sec:app}
 In \cite{ko:tds}, Kostant observed that the Coxeter number $h$ also arises in the following 
context. For $g\in G$, let $\on{ord}(\Ad_g)$ be the order of its adjoint action on $\g$. 
\begin{theorem}[Kostant \cite{ko:tds}] \label{th:kostant1}
Suppose $g\in G$ is regular. Then 
\begin{equation}\label{eq:inequ}
 \on{ord}(\Ad_g)\ge h.
\end{equation}
Equality holds if and only if $g$ is conjugate to $t_*=\exp(\zeta_*)$, where 
\begin{equation}
\zeta_*=\f{1}{h}\rho^\vee\in \on{int}(\Alc).
\end{equation} 
\end{theorem}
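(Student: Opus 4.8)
The plan is to prove Theorem \ref{th:kostant1} by analyzing the order of $\Ad_g$ through the eigenvalues of $\Ad_g$ on the complexified Lie algebra $\g_{\C}$, reducing everything to a question about a single regular element $\xi\in\on{int}(\Alc)$ via the conjugacy-class parametrization already established in the excerpt. Since $g$ is regular, we may conjugate it into $T$, so write $g=\exp(\xi)$ with $\xi\in\on{int}(\Alc)$ (regularity forces $\xi$ into the interior). The eigenvalues of $\Ad_g$ on $\g_{\C}$ are then $1$ (with multiplicity $l=\on{rank}\,G$, on $\t_{\C}$) together with $e^{2\pi i\l\alpha,\xi\r}$ as $\alpha$ ranges over the roots. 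Thus $\on{ord}(\Ad_g)$ is the least common denominator of the rational numbers $\l\alpha,\xi\r$ — equivalently, $\on{ord}(\Ad_g)=n$ is the smallest positive integer with $n\xi\in P^{\vee}$ (recall $P^{\vee}=\exp_T^{-1}(Z(G))$, so $\Ad_g^n=1$ means $g^n\in Z(G)$). So the theorem becomes: for $\xi\in\on{int}(\Alc)$, the smallest $n$ with $n\xi\in P^{\vee}$ satisfies $n\ge h$, with equality exactly at $\xi=\zeta_*=\tfrac1h\rho^{\vee}$.

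First I would establish the inequality. The condition $g$ regular with $\Ad_g^n=1$ says $g^n\in Z(G)$, i.e. $\exp(n\xi)\in Z(G)$, i.e. $n\xi\in P^{\vee}$. Write $n\xi=\eta\in P^{\vee}$. The key is to pair against $\theta$: since $\xi\in\on{int}(\Alc)$ we have $0<\l\alpha_i,\xi\r$ for all simple roots and $\l\theta,\xi\r<1$, hence $0<\l\theta,n\xi\r<n$, i.e. $\l\theta,\eta\r<n$. On the other hand $\l\alpha_i,\eta\r=n\l\alpha_i,\xi\r$ is a positive integer for each $i$ (because $\eta\in P^{\vee}$ and $\l\alpha_i,\cdot\r$ takes integer values on $P^{\vee}$), so each $\l\alpha_i,\eta\r\ge 1$. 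Writing $\theta=\sum_i m_i\alpha_i$ with the marks $m_i\ge 1$, we get $\l\theta,\eta\r=\sum_i m_i\l\alpha_i,\eta\r\ge\sum_i m_i=\l\theta,\rho^{\vee}\r=h-1$ by \eqref{eq:coxeter}. Combining, $h-1\le\l\theta,\eta\r<n$, whence $n\ge h$. This proves \eqref{eq:inequ}.

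Next I would pin down the equality case. Equality $n=h$ forces $\l\theta,\eta\r=h-1$ and forces every inequality $\l\alpha_i,\eta\r\ge 1$ to be an equality, i.e. $\l\alpha_i,\eta\r=1$ for all $i$. By definition of the fundamental coweights $\varpi_i^{\vee}$ (the dual basis to the simple roots), $\l\alpha_i,\cdot\r=1$ for all $i$ holds precisely when $\eta=\sum_i\varpi_i^{\vee}=\rho^{\vee}$. Therefore $h\xi=\eta=\rho^{\vee}$, i.e. $\xi=\tfrac1h\rho^{\vee}=\zeta_*$. Conversely, for $\zeta_*=\tfrac1h\rho^{\vee}$ one checks $h\zeta_*=\rho^{\vee}\in P^{\vee}$ and, using $\l\theta,\zeta_*\r=\tfrac{h-1}{h}<1$ together with $\l\alpha_i,\zeta_*\r=\tfrac1h>0$, that $\zeta_*\in\on{int}(\Alc)$ and that $h$ is genuinely the minimal such $n$ (no smaller denominator works, since $m\zeta_*\in P^{\vee}$ would need $\l\alpha_i,m\zeta_*\r=m/h\in\Z$). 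This gives both directions of the equivalence and identifies $t_*=\exp(\zeta_*)$.

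The main obstacle I anticipate is the bookkeeping that converts $\on{ord}(\Ad_g)$ into the arithmetic statement $n\xi\in P^{\vee}$ — specifically justifying that $\Ad_g$ has order $n$ exactly when $g^n\in Z(G)$ (using that $G$ is simply connected, so $\Ad$ has kernel exactly $Z(G)$ and $\ker(\exp|_T)=Q^{\vee}$, $\exp_T^{-1}(Z(G))=P^{\vee}$), and the clean appearance of $h-1=\l\theta,\rho^{\vee}\r$ as the sum of the marks $m_i$. Everything downstream is then a short pairing argument; the geometric input (regular $\Leftrightarrow$ interior of the alcove) and the definition of $h$ from \eqref{eq:coxeter} do all the work, and no appeal to the Coxeter-element torus $U$ or to the representation theory of the principal $\SU(2)$ is needed for this particular statement.
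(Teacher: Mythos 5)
Your proof is correct and follows essentially the same route as the paper's: reduce to $\xi\in\on{int}(\Alc)$, characterize $\on{ord}(\Ad_g)=n$ by $n\xi\in P^\vee$, and pair against $\theta$ using $\l\theta,\xi\r<1$ together with $\l\theta,\rho^\vee\r=h-1$ to get $n\ge h$ with equality forcing all the simple-root pairings to equal $1$, i.e.\ $\xi=\tfrac1h\rho^\vee$. The only difference is notational bookkeeping — you pair the simple roots against $\eta=n\xi$ where the paper expands $\xi=\sum_i r_i\varpi_i^\vee$ and argues with the integers $n r_i$ — and you additionally spell out the step $\on{ord}(\Ad_g)=n \Leftrightarrow n\xi\in P^\vee$ that the paper asserts without comment.
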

\begin{proof} 
It is enough to prove \eqref{eq:inequ} for elements of $\exp(\on{int}(\Alc))$. Let $t=\exp(\zeta)$ with 
$\zeta\in \on{int}(\Alc)$. Write $\zeta=\sum_{i=1}^l r_i\varpi_i^\vee$ where 
$r_i>0$. Then $\on{ord}(\Ad_t)=m$ if and only if $m\zeta\in P^\vee$, if and only if 
all $mr_i$ are integers. The condition $\l\theta,\zeta\r<1$ gives $\l\theta,m\zeta\r\le m-1$, thus
\[ m\ge 1+\l\theta,m\zeta\r=
 1+\sum_{i=1}^l m r_i \l\theta,\,\varpi_i^\vee\r  \ge 
1+\sum_{i=1}^l \l\theta,\,\varpi_i^\vee\r  =h.\]
Equality $m=h$ holds if and only if all $m r_i$ are equal to $1$, that is, $r_i=\f{1}{h}$. 
\end{proof}
The elements in the conjugacy class $\Co_*=\Ad(G)t_*$ are called the \emph{principal elements} of $G$. An explanation of why the order of the adjoint action by a principal element coincides with the order of a Coxeter element is provided by the following
result. 

\begin{theorem}[Kostant \cite{ko:tds}] \label{th:kostant2}
There exists a maximal torus $U\subset G$, invariant under the adjoint action of $t_*=\exp(\zeta_*)$, 
such that the action of $\Ad_{t_*}$ on $U$ is given by the action of a Coxeter element of $N_{G}(U)/U$. 
\end{theorem}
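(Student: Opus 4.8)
The plan is to realize $U$ as the centralizer of a suitable ``cyclic element'' lying in a single eigenspace of $\sigma:=\Ad_{t_*}$. By Theorem~\ref{th:kostant1} the automorphism $\sigma$ has order $h$, and since $t_*=\exp(\tf1h\rho^\vee)$ it acts on the root space $\g_\alpha$ by the scalar $\zeta^{\l\alpha,\rho^\vee\r}$, where $\zeta=e^{2\pi i/h}$; thus $\sigma$ is semisimple with fixed space $\t_\C$, and its eigenvalues are powers of $\zeta$ graded by the height $\l\alpha,\rho^\vee\r$ of roots modulo $h$. First I would identify the $\zeta$-eigenspace $\g_\C[1]:=\ker(\sigma-\zeta)$. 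The positive roots contributing are exactly the simple roots (height $1$), while among negative roots only $-\theta$ qualifies, since $\l\theta,\rho^\vee\r=h-1$ forces $\l-\theta,\rho^\vee\r\equiv 1 \pmod h$. Hence $\g_\C[1]=\bigoplus_{i=1}^l\g_{\alpha_i}\oplus\g_{-\theta}$, of dimension $l+1$, and I would set
\[ \gamma=\sum_{i=1}^l r_i\,e_{\alpha_i}+e_{-\theta}\in\g_\C[1],\]
where $e_{\alpha_i},e_{-\theta}$ are root vectors and the positive reals $r_i$ are chosen with $r_i^2=n_i$, $\theta^\vee=\sum_i n_i\alpha_i^\vee$ being the comarks.

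Let $\tau$ be the conjugation of $\g_\C$ defining the compact form $\g$, normalized so that $\tau(e_\alpha)=-e_{-\alpha}$ and $[e_\alpha,e_{-\alpha}]=\alpha^\vee$. A direct bracket computation, using that a difference of distinct simple roots is never a root and that $\theta$ is highest, gives $[\gamma,\tau\gamma]=-\sum_i r_i^2\,\alpha_i^\vee+\theta^\vee$, which vanishes precisely because of the balancing $r_i^2=n_i$. So $\gamma$ commutes with $\tau\gamma$; writing $\gamma=\tf12(\xi+i\eta)$ with $\xi=\gamma+\tau\gamma$ and $\eta=\tf1i(\gamma-\tau\gamma)$ two commuting elements of $\g$, I conclude that $\gamma$ is semisimple. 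Then $\u_\C:=Z_{\g_\C}(\gamma)$ is stable under $\tau$ (because $\tau\gamma\in\u_\C$, so $\tau\u_\C=Z_{\g_\C}(\tau\gamma)=\u_\C$) and under $\sigma$ (because $\sigma\gamma=\zeta\gamma$, so $Z_{\g_\C}(\sigma\gamma)=\u_\C$). Granting that $\gamma$ is in fact \emph{regular}, $\u_\C$ is a Cartan subalgebra; its real form $\u:=\u_\C\cap\g$ is then a maximal abelian subalgebra of $\g$, and $U:=\exp(\u)$ is a maximal torus of $G$ preserved by $\Ad_{t_*}$, since $\sigma$ commutes with $\tau$ and preserves $\u_\C$.

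To finish, I would observe that $\Ad_{t_*}$ preserves $\u$, so $t_*\in N_G(U)$ and its class $w:=\Ad_{t_*}|_\u\in N_G(U)/U\cong W$. By construction $w$ has the eigenvector $\gamma$ with eigenvalue $\zeta$, a primitive $h$-th root of unity, and $\gamma$ is a regular element of $\u_\C$; by Springer's theory of regular elements of reflection groups, an element of $W$ admitting a regular eigenvector whose eigenvalue is a primitive $h$-th root of unity is necessarily a Coxeter element. Hence $\Ad_{t_*}$ acts on $U$ as a Coxeter element, as claimed. The main obstacle is the \emph{regularity} of the cyclic element $\gamma$: semisimplicity is automatic from the balancing, but showing $\dim Z_{\g_\C}(\gamma)=l$ is the substantive point. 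I expect to establish it in Kostant's spirit --- for instance by checking that $\ad_\gamma$ carries $l$ independent invariants of the correct degrees (as one sees explicitly in type $A$, where $\gamma$ becomes a cyclic companion-type matrix with distinct eigenvalues), or by adapting a principal $\mathfrak{sl}_2$-triple to the grading --- this being exactly the input for which the result is attributed to \cite{ko:tds}.
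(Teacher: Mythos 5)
Your construction is, in essence, exactly the one the paper sketches following Kostant: a cyclic element in the $e^{2\pi i/h}$-eigenspace of $\Ad_{t_*}$, with coefficients balanced so that it commutes with its $\tau$-conjugate, whose centralizer is a conjugation- and $\Ad_{t_*}$-stable Cartan subalgebra whose compact real part exponentiates to $U$. The apparent discrepancy between your comarks $n_i$ and the paper's marks $k_i$ is only a normalization artifact: the paper normalizes root vectors by an invariant metric, so $[e_\alpha,e_{-\alpha}]$ is the metric dual of $\alpha$ rather than $\alpha^\vee$, and the balancing condition transforms accordingly; your bracket computation $[\gamma,\tau\gamma]=-\sum_i r_i^2\alpha_i^\vee+\theta^\vee$ is correct for the Chevalley normalization $[e_\alpha,e_{-\alpha}]=\alpha^\vee$. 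Where you genuinely diverge, you add value: your derivation of semisimplicity directly from $[\gamma,\tau\gamma]=0$, via the commuting elements $\xi,\eta\in\g$, is complete and replaces the paper's citation of \cite[Lemma 6.3]{ko:tds}; and your appeal to Springer's theory of regular elements to identify $\Ad_{t_*}|_U$ as a Coxeter element is legitimate --- regular semisimplicity of $\gamma$ in $\g_\C$ is precisely Springer-regularity of $\gamma$ in $\u_\C:=Z_{\g_\C}(\gamma)$, and $e^{2\pi i/h}$ is a primitive $h$-th root of unity --- giving a clean modern substitute for Kostant's original 1959 argument, which the paper does not reproduce either. The one substantive point you leave open, the regularity $\dim Z_{\g_\C}(\gamma)=l$, is also the one point the paper outsources entirely to \cite{ko:tds}, so your proposal matches the paper's own level of completeness; but be aware it is not yet self-contained, and as you correctly flag, this dimension count (Kostant carries it out using the $\Z_h$-grading $\g_\C=\bigoplus_j\g_\C[j]$ by eigenspaces of $\Ad_{t_*}$ and the maps $\ad_\gamma\colon\g_\C[j]\to\g_\C[j+1]$) is the real content hiding behind the citation.
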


The maximal torus $U$ is called \emph{in apposition to $T$} \cite[Section 7]{ko:tds}.  
Kostant gives the following construction: Let $e_1,\ldots,e_l\in\g^\C$ be root vectors 
for the simple roots $\alpha_1,\ldots,\alpha_l$, with normalization $\l e_i,\ e_i^*\r=1$
(for some invariant metric on $\g$), and let 
$e_0$ be a root vector for $\alpha_0:=-\theta$, with $\l e_0,e_0^*\r=1$.
Write $\theta=\sum_{i=1}^l k_i\alpha_i$ with the Dynkin marks $k_i$. Then the \emph{cyclic element}
\[ z:=e_0+\sum_{i=1}^l \sqrt{k_i}\ e_i\in \g^\C\]
is  semi-simple \cite[Lemma 6.3]{ko:tds}. One checks that $[z,z^*]=0$, which implies that 
the centralizer of $z$ is invariant under complex conjugation. Let  
$\mf{u}\subset \g$ be the real part of the centralizer; then 
$U=\exp(\mf{u})$ is the desired maximal torus.
\begin{example}
Let $G=\SU(l+1)$, with the standard maximal torus $T$ consisting of diagonal matrices, and the standard choice of positive Weyl 
chamber. The Coxeter number is $h=l+1$. Letting $q$ be the primitive $2l+2$-nd root of unity, the element $t_*=\exp(\zeta_*)$ is a diagonal matrix with entries 
\[ q^l,\ q^{l-2},\ldots,q^{-l}\]
down the diagonal; note that $t_*^{l+1}$ is $(-1)^l$ times the identity matrix. A maximal torus $U$ in apposition to $T$ is given by all those matrices $A\in \SU(l+1)$ that are constant along diagonals:
That is, $A_{ij}=A_{i+1,j+1}$ for all $i,j$, where indices are taken modulo $\mod l+1$. If $l=1$, these are matrices of the form
\[ A=\left(\begin{array}{cc}
\cos(\theta) & i\sin(\theta\\i\sin(\theta) & \cos(\theta)
\end{array}\right).\]
\end{example}

\subsection{Action of the center on the alcove}
The action of the center $Z(G)$ on $G$ given by multiplication commutes with the $G$-action by conjugation. It thereby induces an action on the set of conjugacy classes, and 
hence on the alcove:
\begin{equation}\label{eq:alcoveaction}
Z(G)\times  \Alc\to \Alc. 
\end{equation} 
As a consequence of Theorem \ref{th:kostant1}, the  distinguished conjugacy class $\Co_*=\Ad_G\,t_*$ is invariant under left translation by $Z(G)$, thus $\zeta_*$ is a fixed point for the action \eqref{eq:alcoveaction}. Put differently, for every $c\in Z(G)$ the element $c^{-1} t_*$ is $W$-conjugate to $t_*$. This defines an injective group homomorphism 
\begin{equation}\label{centerweil}
 Z(G)\to W,\ \ c\mapsto w_c
 \end{equation}
such that $c^{-1} t_* =w_c(t_*)$. (See \cite{to:po} for a characterization of its image.) Letting $\zeta_c\in 
P^\vee$ be the unique vertex of $\Alc$ such that $\exp(\zeta_c)=c$, the 
action \eqref{eq:alcoveaction} is given by restriction of the affine action on $\t$, 
\begin{equation}\label{eq:affineaction}
\zeta\mapsto w_c(\zeta-\zeta_{c^{-1}}).
\end{equation}
% the element $\zeta_*$ is fixed under this action. 
Let $N(T)$ be the normalizer of $T$, and  
$N(T)^{(c)}$ its component mapping to $w_c$ under the quotient map $N(T)\to W$. Equivalently,
\begin{equation}\label{eq:ntc}
 N(T)^{(c)}=\{g\in G|\  \Ad_{t_*}(g)=cg\}.
 \end{equation}
Let $U\subset G$ be a maximal torus in apposition to $T$. 
\begin{proposition}\label{prop:kostant2}
The maximal torus in apposition $U$ intersects each of the components $N(T)^{(c)}$ for $c\in Z(G)$ in an orbit of the left action of $Z(G)$. 
It does not intersect any of the other components of $N(T)$. 
\end{proposition}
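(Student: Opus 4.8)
The plan is to reduce the entire statement to the study of a single homomorphism
\[
 \phi\colon U\to U,\qquad \phi(u)=\Ad_{t_*}(u)\,u^{-1}.
\]
Since $U$ is invariant under $\Ad_{t_*}$ and, by Theorem \ref{th:kostant2}, $\Ad_{t_*}$ acts on $U$ as a Coxeter element $w_*'$ of $N_{G}(U)/U$, we have $\phi(u)=w_*'(u)\,u^{-1}$; as $U$ is abelian, $\phi$ is a group homomorphism. The reason for introducing $\phi$ is that, by the characterization \eqref{eq:ntc}, an element $u\in U$ lies in $N(T)^{(c)}$ exactly when $\Ad_{t_*}(u)=cu$, i.e. when $\phi(u)=c$. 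Thus $U\cap N(T)^{(c)}=\phi^{-1}(c)$, so the first assertion becomes a statement about the fibers of $\phi$, while the second becomes the claim that $\phi\big(U\cap N(T)\big)\subset Z(G)$.

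First I would pin down the structure of $\phi$. Its kernel is $\{u\in U\mid w_*'(u)=u\}=U^{w_*'}$, which equals $Z(G)$ by Proposition \ref{prop:dwyer} applied to the maximal torus $U$ with its Coxeter element $w_*'$ (the center is intrinsic to $G$, independent of the choice of torus). The differential $d\phi_e=w_*'-1$ on $\lie{u}$ is invertible, since a Coxeter element has no nonzero fixed vectors; hence $\phi$ is a surjective homomorphism of tori with finite kernel $Z(G)$. Consequently each fiber $\phi^{-1}(c)$ is nonempty and is a single coset of $\ker\phi=Z(G)$, that is, exactly one orbit of the left $Z(G)$-action. (Recall $Z(G)\subset U$, so these cosets do lie in $U$.) This establishes that $U$ meets each $N(T)^{(c)}$ in one $Z(G)$-orbit.

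It remains to show that $U$ meets no other component, i.e.\ $\phi\big(U\cap N(T)\big)\subset Z(G)$, and this is the step I expect to carry the real content. Take $u\in U\cap N(T)$ with image $w\in W$. Computing inside the abelian group $U$, one gets $\phi(u)=u^{-1}\Ad_{t_*}(u)=\Ad_{u^{-1}}(t_*)\,t_*^{-1}=w^{-1}(t_*)\,t_*^{-1}$, so that $\phi(u)$ lies in $T\cap U$. Now I would exploit its two roles: as an element of $T$ it is fixed by $\Ad_{t_*}$ (because $t_*\in T$ and $T$ is abelian), whereas as an element of $U$ it is transformed by $\Ad_{t_*}=w_*'$. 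Equating the two descriptions gives $w_*'(\phi(u))=\phi(u)$, so $\phi(u)\in U^{w_*'}=Z(G)$ by Proposition \ref{prop:dwyer} once more. Hence $u\in N(T)^{(\phi(u))}$ with $\phi(u)\in Z(G)$ (so $w=w_{\phi(u)}$ by regularity of $t_*$), completing the proof. The crux is exactly this double-role argument: the identity $\phi(u)=w^{-1}(t_*)t_*^{-1}\in T\cap U$ forces $\phi(u)$ to be simultaneously $\Ad_{t_*}$-fixed (via $T$) and $w_*'$-rotated (via $U$), and the Dwyer--Wilkerson description of the Coxeter fixed locus then confines it to the center.
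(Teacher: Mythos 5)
Your proof is correct and follows essentially the same route as the paper: the same homomorphism (the paper uses its inverse, $u\mapsto u(\Ad_{t_*}u)^{-1}$), with kernel $U^{t_*}=Z(G)$ forcing surjectivity and identifying the fibers as $Z(G)$-orbits, and the same converse computation showing $\phi(u)=w^{-1}(t_*)\,t_*^{-1}\in T\cap U$. The only (minor, welcome) difference is that where the paper simply asserts $T\cap U=Z(G)$, you prove the needed inclusion $T\cap U\subseteq Z(G)$ via your double-role observation that such elements are $\Ad_{t_*}$-fixed as elements of $T$ but rotated by the Coxeter element as elements of $U$, and then invoke Proposition \ref{prop:dwyer}.
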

\begin{proof}
Consider the group homomorphism 
\begin{equation}\label{eq:surj}
 U\to U,\ u\mapsto u\, (\Ad_{t_*}u)^{-1}.
\end{equation}
Its kernel is a finite subgroup $U^{t_*}= Z(G)$ (cf. Proposition \ref{prop:dwyer}); hence \eqref{eq:surj}  is surjective. In particular, given $c\in Z(G)$, the equation $\Ad_{t_*} u=cu$ admits a solution $u\in U$, and this solution is unique up to elements of $Z(G)$. By \eqref{eq:ntc}, we have that $u\in N(T)^{(c)}$. Conversely, suppose that $u\in N(T)\cap U$ is given, 
representing a Weyl group element $w\in W$. Then $\Ad_u  t_* =w(t_*)$ gives 
$u \Ad_{t_*}(u^{-1})=w(t_*)t_*^{-1}$. The left-hand side is in $U$, the right-hand side is in $T$. Hence, both sides are 
in $T\cap U=Z(G)$. It follows that $w=w_c$ for some $c\in Z(G)$.
\end{proof}

%\begin{remark}Elements of $N(T)^{(c)}\cap U$ can be constructed as follows. Let $\wt{w}_*=\Ad_{t_*}|_\u$ be the Coxeter transformation defined by $t_*$, and let $\wt{\zeta}\in \u$ be an element exponentiating to $c$. Put $\wt{\xi}=(1-\wt{w}_*)^{-1}\wt{\zeta}\in\u$. Then \[ \exp(\wt{\xi})\in U\cap N(T)^{(c)}.\] Indeed, $Ad_{t_*}\exp(\wt{\xi})=\exp(\wt{w}_*(\wt{\xi}))=\exp(\wt{\xi}-\wt{\zeta})= c^{-1} \exp(\wt{\xi})$, as required. \end{remark}

%There are analogues of these elements in $T$, denoted $\zeta_c$ and $\xi_c=(1-w_*)^{-1}\zeta_c$, respectively, where $w_*\in W $ is a Coxeter element.
%

Let $Z\subset Z(G)$ be a subgroup of the center. We are interested in the fixed point set 
of regular elements $t\in T$ on $G'=G/Z$. For $\zeta\in \t$, let $Z_\zeta$ be the stabilizer under the affine action \eqref{eq:affineaction}. 

\begin{proposition}\label{prop:kostant3}
Let $\zeta\in\on{int}(\Alc)$, and $t=\exp(\zeta)$. Then the fixed point set 
for $\Ad_t$ on $G'=G/Z$ is 
\[ (G')^t=\bigcup_{c\in Z_\zeta} N(T)^{(c)}/Z.\]
In particular, every component of the fixed point set meets the maximal torus $U'=U/Z$, in an orbit of the translation action of $Z(G)/Z$.
\end{proposition}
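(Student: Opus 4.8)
The plan is to reduce the computation of $(G')^t$ to a pointwise condition on representatives and then identify the resulting cosets with the components $N(T)^{(c)}$. First I would note that a coset $gZ\in G'$ is fixed by $\Ad_t$ if and only if $\Ad_t(g)\in gZ$, that is, $\Ad_t(g)=cg$ for some $c\in Z$. Here $c=\Ad_t(g)\,g^{-1}$ is uniquely determined by $g$, and since $Z$ is central and fixed by $\Ad_t$ it depends only on the coset $gZ$. Hence $(G')^t=\bigsqcup_{c\in Z}S_c/Z$ with $S_c=\{g\in G\mid \Ad_t(g)=cg\}$, and it remains to analyze each $S_c$.

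For this I would use that $\zeta\in\on{int}(\Alc)$ makes $t$ regular, so its centralizer is exactly $T$. If $g_0,g\in S_c$ then $g_0^{-1}g$ is fixed by $\Ad_t$ and hence lies in $T$, so $S_c$, when nonempty, is a single coset $g_0T$. Rewriting $\Ad_t(g_0)=cg_0$ as $g_0^{-1}tg_0=ct$ and noting that $ct$ is again regular with the same centralizer $T$ (as $c$ is central, $\Ad_{ct}=\Ad_t$), the element $g_0$ carries the unique maximal torus through $t$ to the unique maximal torus through $ct$; both are $T$, so $g_0\in N(T)$ and $S_c$ is a connected component of $N(T)$. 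Moreover $S_c\neq\emptyset$ exactly when $ct$ is conjugate to $t$; since the conjugacy class of $ct=c\exp\zeta$ corresponds to the image of $\zeta$ under the affine action \eqref{eq:affineaction} of $c$, this happens precisely when $c\in Z_\zeta$.

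The main step, and the one requiring care, is to identify which component of $N(T)$ the coset $S_c$ is, namely $N(T)^{(c)}$, the component over $w_c$. The difficulty is that $N(T)^{(c)}$ was defined through the special element $t_*$, whereas $S_c$ is built from an arbitrary regular $t$, so I must show the associated Weyl element is independent of the chosen interior point. Letting $w$ be the image of $g_0$ in $W$, the relation $g_0^{-1}tg_0=ct$ reads $w^{-1}\zeta\equiv\zeta+\zeta_c\pmod{Q^\vee}$; on the other hand $c\in Z_\zeta$ means $w_c(\zeta-\zeta_{c^{-1}})=\zeta$ by \eqref{eq:affineaction}, which rearranges to $w_c^{-1}\zeta=\zeta-\zeta_{c^{-1}}\equiv\zeta+\zeta_c\pmod{Q^\vee}$, the same congruence. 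Comparing the two, $w_cw^{-1}$ fixes $\zeta$ modulo $Q^\vee$, hence defines an element of the affine Weyl group $\Waff=Q^\vee\rtimes W$ stabilizing the interior point $\zeta$ of the alcove; as $\Waff$ acts simply transitively on alcoves, interior points have trivial stabilizer, forcing $w=w_c$ and $S_c=N(T)^{(c)}$. Since the value of $c$ is determined by $gZ$, the sets $N(T)^{(c)}/Z$ are disjoint, and each is connected (being $\cong T'$); they are therefore the connected components of $(G')^t=\bigcup_{c\in Z_\zeta}N(T)^{(c)}/Z$.

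For the final assertion I would apply Proposition \ref{prop:kostant2}: for each $c\in Z_\zeta\subset Z(G)$ the torus in apposition $U$ meets $N(T)^{(c)}$ in a single, nonempty orbit of left translation by $Z(G)$. Because $Z\subset Z(G)=T\cap U\subset U$, this orbit descends to $G'=G/Z$ and shows that $U'=U/Z$ meets the component $N(T)^{(c)}/Z$ in exactly one orbit of the translation action of $Z(G)/Z$, which completes the proof.
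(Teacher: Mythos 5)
Your proof is correct and follows essentially the same route as the paper: reduce the fixed-point condition on $G'$ to $\Ad_t g=cg$ in $G$, use regularity of $t$ to place $g$ in $N(T)$ and identify the relevant components as $N(T)^{(c)}$ with $c\in Z_\zeta$, then invoke Proposition \ref{prop:kostant2} for the statement about $U'$. The only difference is that you spell out, via the simple transitivity of $\Waff$ on alcoves, the step the paper compresses into ``that is, $g\in N(T)^{(c)}$'' --- namely that the Weyl element attached to the component is $w_c$ independently of the interior point $\zeta$ --- which is a worthwhile elaboration rather than a departure.
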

\begin{proof}
Let $g\in G$ be a lift of 
$g'\in G'$. Then $g'\in (G')^t$  if and only if $\Ad_t g=c g$ for some $c\in Z$, 
that is, $g\in N(T)^{(c)}$. 
We obtain $w_c(t)=\Ad_g t=c^{-1}t$, thus $c\in Z_\zeta$.  
\end{proof}

\subsection{The level $k$ fusion ring}
For the material in this section, see e.g., \cite{fra:con}.
We will use the basic inner product to identify $\g\cong \g^*$, hence also 
$\t\cong \t^*$. This identification takes $Q^\vee$ to the sublattice of $Q$ spanned by the long roots. 
For $k\in \N$ we define a finite subgroup of the maximal torus, 
\begin{equation}
T_k=(\f{1}{k}P)/Q^\vee\subset T.
\end{equation}

%A simple fact, that we will use later, is that the Riemannian volume $\on{vol}(T)$, relative to the $k$-th multiple of the basic inner product on $\t$, satisfies $\on{vol}(T)=\# T_k=k^l\#(P/Q^\vee)$. (See e.g. \cite[Proposition 1.2]{bi:sy}.) If $G$ is simply laced, so that $Q=Q^\vee$, one has $\# T_k=k^l\ \#Z(G)$. 

Let $P_+=P\cap \t_+$ be the dominant weights, and $P_k=P\cap k\Alc$  the \emph{level $k$ weights}. Denote by $\chi_\mu$ the character of the irreducible representation of highest weight $\mu\in P_+$; the $\chi_\mu$ are a $\Z$-basis of 
the representation ring $R(G)$ (viewed as a ring spanned by characters of finite-dimensional representations). It is well known that for all 
$k\in \N\cup\{0\}$, a weight of $P_{k+\cox}$ lies in $(k+\cox)\on{int}(\Alc)$ if and only if it is of the form $\lambda+\rho$ with 
$\lambda\in P_k$. 
Put 
\[ \zeta_\lambda=\f{\lambda+\rho}{k+\cox}\in \on{int}(\Alc),\]
and define the \emph{special elements}
\begin{equation}
t_\lambda=\exp\zeta_\lambda\in T_{k+\cox},\ \ \lambda\in P_k.
\end{equation}
Then every regular element of 
$T_{k+\cox}$ is $W$-conjugate to a unique $t_\lambda$. The \emph{level $k$ fusion ideal} $\ca{I}_k(G)\subset R(G)$ is the ideal of characters vanishing at the special elements $t_\lambda$; the quotient ring 
\begin{equation}
R_k(G)=R(G)/\ca{I}_k(G)
\end{equation} 
is the \emph{level $k$ fusion ring} (Verlinde algebra).  It has an additive basis 
\begin{equation}
\tau_\mu\in R_k(G),\ \mu\in P_k
\end{equation}
given by the images  
of $\chi_\mu\in R(G),\ \mu\in P_k$ under the quotient map. For $\lambda\in P_k$, the evaluation map $\on{ev}_{t_\lambda}\colon R(G)\to \C$ descends to $R_k(G)$, and any $\tau\in R_k(G)$ is uniquely determined by these values $\tau(t_\lambda)$. We hence obtain another additive basis (but \emph{only over $\C$}), consisting of the elements $\wt{\tau}_\mu\in R_k(G)\otimes_\Z \C$ such that 
\begin{equation}
\wt{\tau}_\mu(t_\lambda)=\delta_{\lambda,\mu},\ \ \  \lambda,\mu\in P_k.
\end{equation} 
In this new basis, the product is diagonalized: $\wt{\tau}_\mu\wt{\tau}_{\mu'}=\delta_{\mu,\mu'}\wt{\tau}_\mu$. 
As a consequence of the Weyl character formula, one obtains the change-of-basis formulas 
\begin{equation}\label{eq:inversion}
 \wt{\tau}_\mu=\sum_{\lambda\in P_k} S_{\mu,0}S_{*\mu,\lambda}\tau_\lambda,\ \ \ \ \ \tau_\lambda=\sum_{\mu\in P_k} S_{\mu,0}^{-1} S_{\lambda,*\mu}\wt{\tau}_\mu.
\end{equation}
Here the \emph{$S$-matrix} is a symmetric, unitary $l\times l$-matrix given by (see e.g., \cite{fra:con})
%\[ S_{\mu,\lambda}=i^{\hh \dim(G/T)} (\# T_{k+\cox})^{-1/2}\ \sum_{w\in W}(-1)^{l(w)} t_\lambda^{-w(\mu+\rho)}.\]
%
\begin{equation}\label{eq:smatrix}
 S_{\mu,\lambda}=\f{i^{\hh \dim(G/T)}J(t_\lambda)}{  (\# T_{k+\cox})^{1/2}}\
\chi_\mu(t_\lambda)^*
\end{equation}
% \sum_{w\in W}(-1)^{l(w)} t_\lambda^{-w(\mu+\rho)}.\]
 where $J(t)=\sum_{w\in W}(-1)^{l(w)} t^{w\rho}$ is the $W$-skew symmetric Weyl denominator. For all $\mu,\lambda\in P_k$, 
 \begin{equation}\label{eq:sm2}  \tau_\lambda(t_\mu)=\f{S_{\lambda,\ast\mu}}{S_{0,\ast\mu}}\end{equation}
 by the Weyl character formula. 
 
 The action of $Z(G)$ on $\Alc$ also induces an action on the set $P_k$ of level $k$ weights: 
\begin{equation}\label{eq:actiononevelkweights}  c\bullet_k \lambda=w_c(\lambda-k\zeta_{c^{-1}})
\end{equation}
for $c\in Z(G),\ \ \lambda\in P_k$. 
\begin{lemma}\label{lem:equivariant}
The map $P_k\to \t,\ \lambda\mapsto  \zeta_\lambda$ is $Z(G)$-equivariant: 
\[ c.\zeta_\lambda=\zeta_{c\bullet_k\lambda}.\]
\end{lemma}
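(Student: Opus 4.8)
The plan is to verify the identity by direct substitution of the defining formulas \eqref{eq:affineaction} and \eqref{eq:actiononevelkweights}, reducing it to a single $\lambda$- and $k$-independent identity, and then to prove that identity. First I would substitute. Using $\zeta_\lambda=\frac{\lambda+\rho}{k+\cox}$ and the linearity of $w_c$ on $\t\cong\t^*$, the left-hand side is
\[ c.\zeta_\lambda = w_c\!\left(\tfrac{\lambda+\rho}{k+\cox}-\zeta_{c^{-1}}\right)=\tfrac{1}{k+\cox}\,w_c\big((\lambda+\rho)-(k+\cox)\zeta_{c^{-1}}\big), \]
while the right-hand side is
\[ \zeta_{c\bullet_k\lambda}=\tfrac{(c\bullet_k\lambda)+\rho}{k+\cox}=\tfrac{1}{k+\cox}\big(w_c(\lambda-k\zeta_{c^{-1}})+\rho\big). \]
Clearing the denominator and cancelling the common term $w_c(\lambda)$, the asserted equality collapses to the single identity
\begin{equation}\label{eq:star}
 (1-w_c^{-1})\rho=\cox\,\zeta_{c^{-1}},\qquad c\in Z(G),
\end{equation}
which no longer involves $\lambda$ or $k$.

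The main content is therefore \eqref{eq:star}. I would read it as the statement that $\rho$ is a fixed point of the center under its level-$\cox$ action; equivalently, after dividing by $\cox$, that the point $\frac{1}{\cox}\rho\in\on{int}(\Alc)$ is fixed by the affine action \eqref{eq:affineaction} of all of $Z(G)$. One is tempted to deduce this from the fixed point already in hand, namely Kostant's $\zeta_*=\frac1h\rho^\vee$, which is $Z(G)$-fixed because $\Co_*$ is invariant under left translation by the center. Using $\zeta_{c^{-1}}=(1-w_c^{-1})\zeta_*$ — the relation extracted from $c.\zeta_*=\zeta_*$ — the identity \eqref{eq:star} becomes
\[ (1-w_c^{-1})\Big(\rho-\tfrac{\cox}{h}\rho^\vee\Big)=0,\qquad\text{i.e.}\qquad h\rho-\cox\,\rho^\vee\in\t^{w_c}. \]

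This is exactly where I expect the main obstacle to be. For simply laced $G$ the basic inner product identifies $\rho^\vee$ with $\rho$ and $h=\cox$, so the vector $h\rho-\cox\rho^\vee$ vanishes and \eqref{eq:star} is immediate; but for non–simply laced $G$ the Coxeter number $h$ entering Kostant's $\zeta_*$ and the dual Coxeter number $\cox$ entering $\zeta_\lambda$ differ, the two fixed points $\frac1h\rho^\vee$ and $\frac1\cox\rho$ are genuinely distinct, and $h\rho-\cox\rho^\vee\neq 0$. So the invocation of Kostant's point does not suffice on its own, and the remaining task is to show $h\rho-\cox\rho^\vee$ is $w_c$-invariant. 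Among simple groups with nontrivial center only types $B_l$ and $C_l$ are non–simply laced, and in each case $Z(G)=\Z/2$ with a single $w_c$, so this residual claim can be verified directly in the standard coordinate models.

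Conceptually, however, I would prefer a uniform explanation that bypasses the $h$-versus-$\cox$ bookkeeping altogether. The point $\rho$ is the finite part of the affine Weyl vector $\widehat\rho$, whose level is precisely $\cox$, and the elements $w_c$ arise from the special automorphisms of the extended affine Dynkin diagram that permute the affine simple roots $\{-\theta,\alpha_1,\dots,\alpha_l\}$. Since $\widehat\rho$ pairs to $1$ with every affine simple coroot, it is fixed by these automorphisms; projecting to the finite part and reading off the induced action on level-$\cox$ weights gives $c\bullet_\cox\rho=\rho$, which is exactly \eqref{eq:star}. This recasts the desired equivariance as the classical invariance of $\widehat\rho$ under simple currents, with the mismatch between the Coxeter and dual Coxeter numbers being the one genuinely non-formal ingredient.
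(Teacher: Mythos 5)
Your proposal is correct, and its computational skeleton is the same as the paper's: after substituting \eqref{eq:affineaction} and \eqref{eq:actiononevelkweights} and cancelling $w_c(\lambda)$, everything reduces to $(1-w_c^{-1})\rho=\cox\,\zeta_{c^{-1}}$, i.e.\ to $c\bullet_{\cox}\rho=\rho$ --- exactly the key step in the paper, which runs the same substitution in the forward direction. The genuine difference is how this fact is established. The paper's justification is a one-liner requiring no case analysis: by the statement recorded just before the lemma (taken at $k=0$), $\rho$ is the \emph{unique} weight in $\cox\,\on{int}(\Alc)$, and since $\bullet_{\cox}$ preserves this set it must fix $\rho$. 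Your main route --- Kostant's fixed point $\zeta_*$, giving $\zeta_{c^{-1}}=(1-w_c^{-1})\zeta_*$ and reducing the claim to $h\rho-\cox\,\rho^\vee\in\t^{w_c}$, which is formal in the simply laced case and a finite check in types $B_l$ and $C_l$ --- does work: the residual claim is true in both remaining cases. But you leave that check unexecuted, and there is a real pitfall in it: the identity lives in $\t$, so $\rho$ must be transported by the basic inner product, which for $C_l$ is \emph{twice} the standard form on $\R^l$; if one mixes the standard coordinates of $\rho\in\t^*$ and $\rho^\vee\in\t$ without this factor, the $C_l$ verification appears to fail, whereas with the correct normalization the vector $\rho-\tfrac{\cox}{h}\rho^\vee$ has $i$-th coordinate proportional to $i-\tfrac{l+1}{2}$ and is indeed fixed by $w_c(e_i)=-e_{l+1-i}$ (in type $B_l$ it is proportional to $(0,-1,\dots,-(l-1))$ and is fixed by $w_c$, which flips only $e_1$, whose coefficient vanishes). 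Your closing uniform argument via the affine Weyl vector is sound and is essentially the representation-theoretic form of the paper's uniqueness argument: both come down to all affine Dynkin labels of $\wh\rho$ being $1$, so that diagram automorphisms, which permute these labels, fix $\wh\rho$ at level $\cox$. What the paper's route buys is brevity and uniformity; what yours buys is the explicit formula $\zeta_{c^{-1}}=(1-w_c^{-1})\zeta_*$ and a transparent explanation of why simply laced groups are formal while $B_l$ and $C_l$ require the extra invariance.
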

\begin{proof}
The weight $\rho$ is fixed under the level $\cox$ action: $c\bullet_{\cox}\rho=\rho$, since $\rho$ is the unique weight in $\cox\on{int}(\Alc)$. It follows that 
\[ c\bullet_{k+\cox}(\lambda+\rho)=
w_c\big(\lambda+\rho-(k+\cox)\zeta_{c^{-1}}\big)=(c\bullet_k \lambda)+\rho,\]
and therefore
\[c.\zeta_\lambda=\f{1}{k+\cox} \big(c\bullet_{k+\cox}(\lambda+\rho)\big)=\f{1}{k+\cox}\big( (c\bullet_k \lambda)+\rho\big)=\zeta_{c\bullet_k\lambda}.\qedhere\]
\end{proof}

\section{Quasi-Hamiltonian spaces}
We briefly recall the definition and basic properties of quasi-Hamiltonian $G$-spaces, and then discuss their prequantization and quantization.

\subsection{Basic definitions}\label{subsec:basic}
Let $G$ be a compact Lie group, together with an invariant metric $B$ on its Lie algebra $\g$. Denote by $\theta^L,\theta^R\in \Omega^1(G,\g)$ the Maurer-Cartan-forms, and by 
\begin{equation}\label{eq:cartan3form}
\eta=\f{1}{12}B(\theta^L,[\theta^L,\theta^L])\in \Omega^3(G)
\end{equation}
the \emph{Cartan 3-form}. A \emph{quasi-Hamiltonian $G$-space} \cite{al:mom} is a $G$-manifold $M$, together with an invariant 2-form $\omega\in \Omega^2(M)$ and an equivariant \emph{moment map} $\Phi\colon M\to G$ satisfying the following axioms: 
 \begin{enumerate}
 \item $\d\omega=-\Phi^*\eta$, 
 \item $\iota(\xi_M)\omega=-\hh \Phi^*B(\theta^L+\theta^R,\xi)$, 
 \item $\ker(\omega_m)\cap \ker(T_m\Phi)=0$ for all $m\in M$. 
 \end{enumerate}
 Here $\xi_M$ is the vector field on $M$ defined by $\xi\in\g$. 
 There are many examples of such spaces; see e.g., \cite{boa:qu,esh:ex,hu:imp,kno:mul}.
 In this paper, we will need only two classes of examples.  First, there are the $G$-conjugacy classes $\Co\subset G$, with moment map the inclusion, and 2-form 
\[ \omega((\xi_1)_\O,(\xi_2)_\O)|_g=\hh B\big( (\Ad_g-\Ad_{g^{-1}}) \xi_1,\xi_2\big).\]
  Secondly, for a compact oriented surface $\Sigma_\gg^1$ with one boundary component, the space \[ M_G(\Sigma_\gg^1)=\Hom(\pi_1(\Sigma),G)\] 
 is canonically a quasi-Hamiltonian $G$-space, with moment map given by restriction to the boundary circle $\partial \Sigma$
 (and identifying $\Hom(\pi_1(\partial\Sigma),G)=\Hom(\Z,G)=G$.)  If $G'=G/Z$ for some subgroup $Z\subset Z(G)$, then the quasi-Hamiltonian $G'$-space  $M_{G'}(\Sigma_\gg^1)$ can also be regarded as a quasi-Hamiltonian $G$-space, 
by viewing it as a quotient 
\[ M_{G'}(\Sigma_\gg^1)=M_G(\Sigma_\gg^1)/\Hom(\pi_1(\Sigma),Z)\]
(both $\omega$ and $\Phi$ is invariant under this action). For $\gg=1$, one gets 
an explicit description of the space $M_G(\Sigma_\gg^1)$, by using standard generators of the fundamental group so that the  boundary circle is their group commutator. One finds 
$M_G(\Sigma_\gg^1)\cong D(G)$ where $D(G)=G\times G$ is the \emph{double}, with $G$ acting diagonally by conjugation, with moment map the Lie group commutator $\Phi(a,b)=aba^{-1}b^{-1}$, and with the 2-form 
\begin{equation}\label{eq:2formdouble}
\omega=\hh B(a^*\theta^L,\,b^*\theta^R)+\hh B(a^*\theta^R,\,b^*\theta^L)+\hh B((ab)^*\theta^L,\, (a^{-1}b^{-1})^*\theta^R).
\end{equation}
(In this formula, the components $(a,b)\in D(G)$ are regarded as maps $a,b\colon D(G)\to G$.)  Likewise $M_{G'}(\Sigma_\gg^1)=D(G')$, viewed as a quasi-Hamiltonian $G$-space 
\begin{equation}\label{eq:dgquotient}
D(G')=D(G)/(Z\times Z).
\end{equation} 
Given two quasi-Hamiltonian $G$-spaces $M_1,M_2$, their direct product $M_1\times M_2$ with the diagonal $G$-action, the 2-form $\omega_1+\omega_2+\hh B(\Phi_1^*\theta^L,\Phi_2^*\theta^R)$, and the moment map $\Phi_1\Phi_2$ is a quasi-Hamiltonian $G$-space. This so-called \emph{fusion product} is associative. The moduli spaces for the higher genus surfaces with boundary are fusion products,  
\begin{equation}\label{eq:isomfactors}
 M_G(\Sigma_\gg^1)\cong 
D(G)\times \cdots \times D(G),\ \ \ 
M_{G'}(\Sigma_\gg^1)\cong 
D(G')\times \cdots \times D(G');\end{equation}
the second space is obtained from the first as a quotient by $\Hom(\pi_1(\Sigma),Z)\cong Z^{2\gg}$. Symplectic quotients (reductions) of quasi-Hamiltonian $G$-spaces are defined similar to ordinary moment maps
by 
\[ M\qu G=\Phinv(e)/G.\] 
If $e$ is a regular value of the moment map, then $M\qu G$ is a symplectic orbifold, otherwise it is a stratified symplectic space as in \cite{sj:si}. Reductions with 
respect to conjugacy classes $\Co\subset G$ are defined using the `shifting trick', as $\Phinv(\Co)/G=(M\times \Co^*)\qu G$ where $\Co^*$ denotes $\Co$ with the opposite 2-form, and with the inverse of the inclusion map as a moment map. For the moduli space example, one has 
\[ (M_G(\Sigma_\gg^1)\times \Co^*)\qu G=M_G(\Sigma_\gg^1;\Co),\]
the symplectic moduli space with boundary holonomy in the 
prescribed conjugacy class. As a special case $\Co=\{e\}$, one finds 
$M_G(\Sigma_\gg^1)\qu G=M_G(\Sigma_\gg^0)$. 

Let us list the following basic properties of quasi-Hamiltonian $G$-spaces $(M,\om,\Phi)$. For proofs, see \cite{al:mom}.  
\begin{enumerate}
\item \label{it:1a}
If $G$ is connected, then $M$ is even-dimensional. 
\item \label{it:1b} If $G$ is 1-connected, or more generally if it is connected and the map $\Ad\colon G\to \SO(\g)$ lifts to the spin group, then $M$ has a canonical orientation. 
\item \label{it:1c} If $G=T$ is a torus, then $\omega$ is non-degenerate (i.e., symplectic).  
\item \label{it:1d} Let $\g_0\subset \g$ be a convex neighborhood of $0$ over which $\exp$ is a diffeomorphism, and $G_0=\exp(\g_0)$.  Then $M_0:=\Phinv(G_0)$ has (canonically) the structure of a Hamiltonian $G$-space $(M_0,\omega_0,\Phi_0)$, in such a way that $\Phi=\exp\circ\Phi_0$ and $\omega|_m=\omega_0|_m$ for all $m\in \Phinv(e)$. In fact, $\omega_0=\omega|_{M_0}+\Phi_0^*\varpi$, where $\varpi$ is the primitive of $\exp^*\eta|_{\g_0}$ defined by the homotopy operator.
\item \label{it:1e} For $a\in G$, with centralizer $G^a$, each component $F$ of the fixed point set $M^a$  is a quasi-Hamiltonian $G^a$-space. In particular, if $G^a$ is abelian, then $F$ is symplectic. 
\item\label{it:1f}  If $M$ is connected, and $G$ is 1-connected, then the fibers of $\Phi$ are connected, and the image of $\Phi(M)$ under the quotient map $G\to G/\Ad_G\cong \Alc$ is a convex polytope, referred to as the \emph{moment polytope} of $(M,\omega,\Phi)$. 
\end{enumerate}

\subsection{Pre-quantization}\label{subsec:preq}
Following Kostant \cite{ko:qu} and Souriau \cite{so:st}, one defines a prequantization of a manifold $M$ with closed 2-form $\omega$ to be a  Hermitian line bundle $L\to M$ with connection, such that the Chern form of the connection equals $\omega$. In particular, the Chern class $c_1(L)\in H^2(M,\Z)$ is an integral lift of the cohomology class of the symplectic form. If a prequantization exists, then any two such differ by a flat Hermitian line bundle, or equivalently (if $M$ is connected)  by a homomorphism $\pi_1(M)\to \U(1)$. Given a Hamiltonian $G$-action on $M$, with equivariant moment map $\Phi\colon M\to \g^*$, then the Kostant formula \cite{ko:qu} gives a lift of the  infinitesimal $\g$-action, in such a way that the moment map measures the vertical part. If $G$ is 1-connected, then this infinitesimal action integrates uniquely to a $G$-action. 

A necessary and sufficient condition for prequantizability is the integrality of $\omega$: for all closed oriented surfaces $\Sigma$ and all smooth maps  $f\colon \Sigma\to M$, the integral $\int_\Sigma f^*\omega$ must be an integer. 

We will define prequantizations of quasi-Hamiltonian $G$-spaces only for the setting needed here, namely, $G$ is simply connected and simple. (The generalization to the case of several simple factors is straightforward.) Then $H^j(G,\Z)$ vanishes  $j=1,2$ and is $\Z$ for $j=3$, with generator represented in de Rham cohomology by the Cartan 3-form \eqref{eq:cartan3form} for $B=B_{\on{basic}}$. The conditions $\d\omega=-\Phi^*\eta,\ \d\eta=0$ say that the pair $(\omega,\eta)$ is a closed form in the relative complex 
\[ \Omega^\bullet(\Phi)=\Omega^{\bullet-1}(M)\oplus \Omega^\bullet(G).\] 
We define: 
\begin{definition} \cite{kre:pr} Suppose $G$ is simply connected. Then a quasi-Hamiltonian $G$-space $(M,\omega,\Phi)$
is \emph{prequantizable} if the class $[(\omega,\eta)]\in H^3(\Phi,\R)$ admits a 
lift $x\in H^3(\Phi,\Z)$; the choice of such a lift  is called a \emph{prequantization}. 
\end{definition}
Here are some basic facts regarding prequantizations of connected quasi-Hamiltonian $G$-spaces $(M,\omega,\Phi)$ for $G$ simply connected. We will assume that $G$ is simple; there are straightforward extensions to the case of several simple factors. 
\begin{enumerate}
\item \label{it:2a}The set of prequantizations is either empty, or is a 
torsor under the torsion subgroup of $H^2(M,\Z)$, or equivalently the group $\Hom(\pi_1(M),\U(1))$.  
\item \label{it:2b}
A necessary condition for the existence of a prequantization is that $\eta$ is integral, hence 
$B$ must be an integer multiple $k B_{\on{basic}}$ of the basic inner product. We then say that $M$ is \emph{prequantized at level $k$}.
\item \label{it:2c} For any quasi-Hamiltonian space (prequantized or not), there is a canonically defined class $y\in H^3(\Phi,\Z)$ at level $2\cox$, which plays the 
role of the anti-canonical line bundle. Given a level $k$ prequantization $x$, the class $2x+y\in H^3(\Phi,\Z)$ at level $2(k+\cox)$ plays the role of a spin-c line bundle. See \cite{al:ddd,me:twi} 
\item \label{it:2d} 
A conjugacy class $\Co\subset G$ admits a level $k$ prequantization, necessarily unique,
if and only if $\Co=G.\exp(\f{1}{k}\mu)$ with $\mu\in P_k$. 
\item \label{it:2e}
A fusion product of two quasi-Hamiltonian spaces is prequantizable  if and only if both factors are prequantizable, and in this case a prequantization of the factors determines a 
prequantization of the product. 
\item \label{it:2f}
Integrality of relative cocycles can be tested by pairing with relative cycles: The 
cocycle $(\omega,\eta)\in \Omega^3(\Phi)$ is integral if and only if 
for every smooth map $f\colon \Sigma\to M$ of a compact oriented surface $\Sigma$ into $M$, and every smooth homotopy 
$h\colon [0,1]\times \Sigma\to G$ between $h_1=\Phi\circ f$ and a constant map $h_0$, the sum
\[ \int_\Sigma f^*\omega+\int_{[0,1]\times \Sigma}h^*\eta\] 
is an integer. (This uses the fact that $G$ is 2-connected.) 
%If $M$ is simply connected, it suffices to check the criterion for $\Sigma$ a 2-sphere. 
\item \label{it:2g}
If $H_2(M,\Z)=0$, then integrality of $\eta$ is sufficient for prequantizability. For example, $D(G)$ has a unique prequantization for any integer level. 
\item \label{it:2h}
Let $(M_0,\omega_0,\Phi_0)$ as in \ref{subsec:basic}\eqref{it:1d}. Regarding 
$\Phi_0$ as a map into the contractible subset $\g_0=\log(G_0)\subset \g$, we have a pullback map 
\[ H^3(\Phi,\Z)\to H^3(\Phi_0,\Z)\cong H^2(M_0,\Z).\]
In particular, every prequantization  $x\in H^3(\Phi,\Z)$ determines a class $x_0\in H^2(M_0,\Z)$, which is an integral lift of the de Rham class of $\omega_0$. 
Hence it corresponds to a prequantum line bundle $L_0\to M_0$ for the Hamiltonian $G$-spaces $(M_0,\omega_0,\Phi_0)$, with $x_0=c_1(L_0)$. Similarly, the image of the class $y$ (cf.~\eqref{it:2c}) under this map is the Chern class $y_0=c_1(K^{-1})$ of the anti-canonical line bundle over $M_0$. 
\item \label{it:2i}
Suppose $\iota_F\colon F\hra M$ is a submanifold such that $\Phi(F)\subset T$. (For example, a component of the fixed 
point set of a regular element $a\in T$.) Denoting by $\eta_r$ the Cartan 3-form \eqref{eq:cartan3form} defined by $r B_{\on{basic}}$, there is a linear map in de Rham cohomology
\[ H^3(\Phi)\to H^2(F),\ \ \ [(\sigma,\eta_r)]\mapsto [\iota_F^*\sigma].\]
As explained in \cite[Section 5.1]{me:twi}, this map lifts to integral classes 
at \emph{even} level: Letting 
$H^3(\Phi,\Z)_r$ be elements at level $r$ (i.e., the image in $H^3(G,\Z)=\Z$ is $r$), 
one obtains a canonical map $ H^3(\Phi,Z)_r\to H^2(F,\Z)$ for $r$ \emph{even}. (In
the resulting line bundle over $F$ is $T_r$-equivariant.)   In particular, if $M$ is prequantized at an even level, then $F$ inherits a $T_k$-equivariant prequantization. If $F\subset M_0$, the restriction to even levels is not needed, since one can simply take the restriction of $L_0$. 
\end{enumerate}
In Appendix \ref{app:a}, we will prove the following results about the descent of prequantizations to quotients. Suppose $(M,\om,\Phi)$ is a connected quasi-Hamiltonian $G$-space,  
such that the universal cover $\wt{M}$ (with $\wt{\omega},\wt{\Phi}$ the pullback of 2-form and moment map) is prequantized. Similar to the case of manifolds with closed 2-forms, one then obtains a  central extension of the fundamental group $\pi_1(M)$, and $M$ itself admits a prequantization  if and only if this central extension is trivial. If the fundamental group  is \emph{abelian}, then the central extension is described by its \emph{commutator function} $q\colon \pi_1(M)\times \pi_1(M)\to \U(1)$ (see Proposition \ref{prop:prequant1}). The latter 
has the following expression: 
\begin{equation}\label{eq:q}
q(u,v)= \exp\Big(\tpi \big(\int_{{S^1\times S^1}} f^*\omega +\int_{{S^1\times S^1}\times [0,1]} h^*\eta\big)\Big).
\end{equation}
Here $f\colon S^1\times S^1\to M$ is any smooth map such that the induced map on fundamental groups takes the generators to $u,v$ respectively, and $h$ 
%$h\colon S^1\times S^1\times [0,1]\to N$%
 is a smooth homotopy between $h_1=\Phi\circ f$ and 
a constant map $h_0$. That is, $(\omega,\eta)$ is integral if and only if 
$(\wt{\omega},\eta)$ is integral and $q=1$.

\subsection{Quantization and the fixed point formula}\label{subsec:quant}
Suppose $G$ is simply connected and simple, and let $M$ be a level $k$ prequantized quasi-Hamiltonian $G$-space. As shown in \cite{al:fi} (using a fixed point formula as a definition)  and \cite{me:twi} (more conceptually, as a $K$-homology push-forward  and deriving the fixed point formula as a consequence), this then determines an element 
\begin{equation}
\ca{Q}(M)\in R_k(G)
\end{equation}
called its \emph{quantization}. By \cite[Section 2.3]{al:fi} (see also \cite[Theorem 9.5]{me:twi}), the numbers 
$\mathcal{Q}(M)(t_\lambda)$ for $\lambda\in P_k$ are given as a sum of contributions 
from the fixed point manifolds of $t=t_\lambda$:
\begin{align} \label{eq:localization}
\mathcal{Q}(M)(t) &= \sum_{F\subset M^t} \chi_F(t)^{1/2} \int_F \frac{\widehat{A}(F)\
\exp(\hh c_1(\ca{L}_F))}{D_\R(\nu_F,t) }.
\end{align}
Here each $F\subset M^t$ is a quasi-Hamiltonian $G^t=T$-space; in particular, the 2-form $\omega_F$ given as the pull-back of $\omega$ is symplectic, and endows $F$ with an orientation. The class $\wh{A}(F)$ is the usual $\wh{A}$-class of $F$, while 
$D_\R(\nu_F,t)$ is an equivariant characteristic class  for the normal bundle of $F$
given by 
\begin{equation}\label{eq:dr}
 D_\R(\nu_F,t)=i^{\on{rank}(\nu_F)/2}\,{\det}^{1/2}(1-A(t)^{-1}e^{R/2\pi})\end{equation}
where $A(t)$ is the action of $t$ on the normal bundle and $R$ is the curvature 2-form of a $t$-invariant connection. (See \cite{al:fi}). The Chern class $c_1(\L_F)$ is the image of $2x+y$ under the map $H^3(\Phi,\Z)_{2(k+\cox)}\to H^2(F,\Z)$ described in \ref{subsec:preq}\eqref{it:2i}. 

The phase factor  $\chi_F(t)\in \U(1)$ is given by a natural action of $t$ on $\L_F$; it is constant on the fixed point components. The detailed definition of this phase factor, and the correct choice of square root is explained in \cite{al:fi}. If $F$ contains a point $m$ in $\Phinv(e)$ (or more generally a point in $M_0$), it may be computed as follows: 
Let $L_0\to M_0$ be the prequantum line bundle as in \ref{subsec:preq}\eqref{it:2h}, and denote by $\mu_F(t)\in \U(1)$ the weight the action of $t$ on $L_0|_m$. Recall that the 2-form $\omega_m$ on $T_mM$ is symplectic. Choose a compatible complex structure on the tangent space, invariant under the action of $t$, which hence becomes a unitary transformation $A_F(t)$. Then \cite[Remark 4.7]{al:fi}
\begin{equation}\label{eq:complex}
 \chi_F(t)^{1/2}=\mu_F(t)\ {\det}_\C(A_F(t)^{1/2}),
 \end{equation}
using the unique square root of $A_F(t)$ having all its eigenvalues in the set of $e^{i\theta}$ with 
$0\le \theta<\pi$.

\section{The space $M_{G'}(\Sigma_1^1)$}
In this section, we will compute the prequantization and the quantization of the 
moduli space $M_{G'}(\Sigma_1^1)\cong D(G')$, where $G$ is a compact, simple Lie group and $Z\subset G$ is a subgroup of the center $Z(G)$, and $G'=G/Z$.
Throughout, we will denote the basic inner product by a dot, that is $B_{\on{basic}}(\xi_1,\xi_2)=\xi_1\cdot \xi_2$.

\subsection{The basic level}
As before, we denote $\Lambda_Z=\exp_T^{-1}(Z)$. Following Toledano-Laredo 
\cite{to:po}, we refer to  the smallest $k_0\in\N$ such that $k_0 B_{\on{basic}}$ takes on integer values on $\Lambda_Z$, as the  \emph{basic level} (with respect to $Z$) . Equivalently, this is the smallest natural number such that 
\begin{equation}
\label{eq:k0} k_0\ \Lambda_Z \cdot \Lambda_Z\subset \Z.\end{equation}
We will also be interested in another level $k_1$, defined to be the smallest natural number such that  
\[ k_1\ \Lambda_Z\cdot P^\vee\subset \Z. \] 
Since 
$\Lambda_Z\subset P^\vee$, the level $k_1$ is a multiple of $k_0$. Some of the proofs below simplify whenever $k_1=k_0$; in particular, this is the case when  $Z=Z(G)$ so that $\Lambda_Z=P^\vee$.

The levels $k_0,k_1$ are easily worked out using the tables for compact Lie groups (e.g., Bourbaki \cite{bou:li2}). For $Z=Z(G)$ one obtains   \cite{to:po}:
\begin{table}[h!]
  \centering
  %\caption{Caption for the table.}
  \label{tab:table1}
  \begin{tabular}{|l|| c |c|c|c|c|c|c|c|c|}
 $G$\  & $A_l$ & $B_l$ & $C_{2r}$ & $C_{2r+1}$ & $D_{2r}$ & $D_{2r+1}$ & $E_6$ & $E_7$ \\ 
    \hline 
 $Z(G)$ & $\Z_{l+1}$    & $\Z_2$  & $\Z_2$     & $\Z_2$    & $\Z_2\times \Z_2$   
&  $\Z_4$ 
  & $\Z_3$   & $\Z_2$     \\
   \hline 
$k_0=k_1$  & $l+1$    &  $1$   &  $1$ & $2$  &  $2$  &  $4$  & $3$   & $2$      
 \\
  \end{tabular}
\end{table}
\vskip.2in

If $Z=\{1\}$ one has $k_0=k_1=1$, by the properties of the basic inner product. If $Z$ is a non-trivial proper subgroup of $Z(G)$, we have:  
\begin{itemize}
\item For $A_l$ with $Z=\Z_m$, where $m$ divides $l+1$, the basic level $k_0$ is the smallest natural number such that $k_0 (l+1) \in m^2 \Z$, while $k_1=m$. (In particular, 
$k_1=k_0$ if and only if $m$ and $(l+1)/m$ are relatively prime.) 
\item For $D_l$, there are three different subgroups  $Z\subset Z(G)$ isomorphic to $\Z_2$; with the labelings of fundamental coroots as in \cite{bou:li2}, they are 
generated by the exponentials of the coroots 
$\varpi_1^\vee, \varpi_{l-1}^\vee,\ \varpi_l^\vee$. 
\begin{itemize}
\item[(a)] $Z=\{(e,\exp(\varpi_1^\vee)\}$.
One checks that $\varpi_1^\vee\cdot \varpi_j^\vee$ equals $1$ if $j\le l-2$ 
and $\f{1}{2}$ for $j=l-1,l$. Hence $k_0=1,\ k_1=2$. 

\item[(b)] $Z=\{(e,\exp(\varpi_{l-1}^\vee)\}$.
One finds that $\varpi_{l-1}^\vee\cdot \varpi_j^\vee$ equals $\f{j}{2}$ for $j\le l-2$, 
$\f{l}{4}$ for $j=l-1$, and $\f{l-2}{4}$ for $j=l$. Hence 
$k_0=k_1=4$ if $l$ is odd, $k_0=k_1=2$ if $l\in 4\Z+2$, and 
$k_0=1,\ k_1=2$ if $l\in 4\Z$. 

\item[(c)] $Z=\{(e,\exp(\varpi_l^\vee)\}$. The calculation is similar to that in (b), with the same results.
\end{itemize}
\end{itemize}

Notice that $k_1$ is different from $k_0$ only in a few exceptional cases.

\subsection{Prequantizability of the space $D(G/Z)$}
Let $D(G)$ be the double of $G$, with the quasi-Hamiltonian structure at level $k$ (i.e., using the inner product  $B=kB_{\on{basic}}$). Since the moment map and 2-form of $D(G)$ are invariant under the $Z\times Z$-action, they descend to $D(G')$ regarded as a quasi-Hamiltonian $G$-space, 
\[ D(G')=D(G)/(Z\times Z).\]
The following result was proved in \cite{kre:pr} using a case-by-case examination; we will give a new proof based on the criterion in Section \ref{subsec:preq}. 

\begin{proposition}[Krepski \cite{kre:pr}] \label{prop:krepski}
The quasi-Hamiltonian $G$-space $D(G/Z)$ is prequantizable at level $k$ if and only if  
$k$ is a multiple of the basic level $k_0$. 
\end{proposition}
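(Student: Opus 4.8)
The plan is to deduce the proposition from the descent criterion for prequantizations recorded at the end of Section~\ref{subsec:preq}. Since $G$ is simply connected, the double $D(G)=G\times G$ is the universal cover of $D(G')=D(G)/(Z\times Z)$: the group $Z\times Z$ acts freely by left translations by central elements, so $\pi_1(D(G'))\cong Z\oplus Z$, an abelian group. Because $H_2(D(G),\Z)=0$, the double $D(G)$ carries a unique prequantization at every integer level $k$; thus the pair $(\wt\omega,\eta)$ is always integral. By the descent criterion, $(\omega,\eta)$ is integral, i.e.\ $D(G')$ is prequantizable at level $k$, if and only if the commutator function $q\colon\pi_1(D(G'))\times\pi_1(D(G'))\to\U(1)$ of the associated central extension satisfies $q\equiv 1$. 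Here I also use that $\U(1)$ is a divisible, hence injective, $\Z$-module, so that every \emph{abelian} central extension of the abelian group $Z\oplus Z$ by $\U(1)$ splits; thus the triviality of $q$ is exactly what is needed. The whole problem therefore reduces to computing $q$.

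Next I would use that $q$ is an alternating bicharacter on $Z\oplus Z$, so it is determined by its values on pairs of generators. A generator in one of the two $G'$-factors is represented by a loop lifting to a path in $G$ from $e$ to an element of $Z$; concretely, for $z\in Z$ I choose $\zeta_z\in\Lambda_Z$ with $\exp\zeta_z=z$ and use the torus path $s\mapsto\exp(s\zeta_z)$. To evaluate $q$ on a first-factor class $u_z$ and a second-factor class $v_w$ I take the torus-valued test map
\[
 f\colon T^2\to D(G'),\qquad f(s,t)=\bigl[\exp(s\zeta_z),\,\exp(t\zeta_w)\bigr],
\]
which is well defined because the two sets of endpoints differ by elements of $Z$, and whose two circles represent $u_z$ and $v_w$ as required in \eqref{eq:q}.

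The key computation is the evaluation of \eqref{eq:q} for this $f$. The decisive simplification is that $f$ takes values in the image of the maximal torus, so the moment map $\Phi\circ f=aba^{-1}b^{-1}$ is identically $e$; hence the homotopy $h$ may be taken constant and the $\eta$-contribution vanishes. It then remains to integrate the $2$-form \eqref{eq:2formdouble}. With $a=\exp(s\zeta_z)$ and $b=\exp(t\zeta_w)$ one has $a^*\theta^L=a^*\theta^R=\zeta_z\,ds$ and $b^*\theta^L=b^*\theta^R=\zeta_w\,dt$, while the third summand of \eqref{eq:2formdouble} pairs the form $\zeta_z\,ds+\zeta_w\,dt$ with its negative and drops out by symmetry of $B$. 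The first two summands each contribute $\tfrac12 B(\zeta_z,\zeta_w)\,ds\wedge dt$, so $\int_{T^2}f^*\omega=B(\zeta_z,\zeta_w)=k\,(\zeta_z\cdot\zeta_w)$ and therefore $q(u_z,v_w)=\exp\bigl(\tpi\,k\,\zeta_z\cdot\zeta_w\bigr)$. The same torus-lift computation, run with both loops in a single factor (so that $b$ is constant), shows that $q$ vanishes on the diagonal blocks, consistent with $q$ being alternating.

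Finally I would translate the vanishing of $q$ into the stated arithmetic condition. The number $k\,\zeta_z\cdot\zeta_w$ is well defined modulo $\Z$ independently of the chosen representatives, since altering $\zeta_z$ by an element of $Q^\vee$ changes it by $k\,B_{\on{basic}}(Q^\vee,\Lambda_Z)\subset k\,B_{\on{basic}}(Q^\vee,P^\vee)\subset\Z$ by \eqref{eq:basicfact}. Thus $q\equiv 1$ if and only if $k\,\zeta_z\cdot\zeta_w\in\Z$ for all $z,w\in Z$, which, writing every element of $\Lambda_Z$ as $\zeta_z+Q^\vee$ and using \eqref{eq:basicfact} once more, is equivalent to $k\,\Lambda_Z\cdot\Lambda_Z\subset\Z$. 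By the definition \eqref{eq:k0} of the basic level, this holds exactly when $k_0\mid k$, which proves the proposition. The step I expect to demand the most care is the evaluation of $\int_{T^2}f^*\omega$: one must verify that the awkward third term of \eqref{eq:2formdouble} genuinely cancels and that confining the lift to the torus is precisely what kills the $\eta$-term; once the descent criterion and the injectivity of $\U(1)$ are in hand, the rest is formal.
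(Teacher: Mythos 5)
Your proposal is correct and follows essentially the same route as the paper's proof: both reduce prequantizability to the vanishing of the commutator map $q$ on $\pi_1(D(G'))\cong Z\times Z$ via the criterion \eqref{eq:q}, evaluate $q$ on torus-valued test maps $f\colon S^1\times S^1\to D(T')$ (where $\Phi\circ f\equiv e$ kills the $\eta$-term and the third summand of \eqref{eq:2formdouble} cancels), and conclude $q=1$ iff $k\,\Lambda_Z\cdot\Lambda_Z\subset\Z$, i.e., $k_0\mid k$. The only cosmetic difference is that you compute $q$ on generators and extend by bimultiplicativity, whereas the paper evaluates the integral for an arbitrary pair $(c,d)$ directly, obtaining $q(c,d)=\exp\big(\tpi\,k(\xi_1\cdot\zeta_2-\xi_2\cdot\zeta_1)\big)$ in one stroke.
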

\begin{proof}
We will compute the commutator map on $\pi_1(D(G'))=Z^2$ using the integral expression \eqref{eq:q}. Given $c,d\in Z^2$, write $c=(\exp_T(\xi_1),\exp_T(\xi_2))$
and $d=(\exp_T(\zeta_1),\exp_T(\zeta_2))$
 with $\xi_i,\zeta_i\in \Lambda_Z$ for $i=1,2$. The map 
 \[ \wt{f}\colon \R\times \R\to D(G'),\ \ 
(s,t)\mapsto \exp_{G'}(s\xi_1+t\zeta_1,\ s\xi_2+t\zeta_2)\]
descends to a map $f\colon S^1\times S^1\to D(G')$,  and the induced map on fundamental groups $\Z\times \Z\to \pi_1(D(G'))$ takes $(1,0)$ and $(0,1)$ to 
$c,d\in \pi_1(D(G'))$, respectively. The map $h_1=\Phi\circ f\colon D(G')\to G$ is constant; hence we may take the homotopy $h$ to be a constant map, and 
\eqref{eq:q} simplifies to 
\[ q(c,c')=\exp\Big(\tpi \int_{S^1\times S^1} f^*\omega\Big)
=\exp\Big(\tpi \int_{[0,1]\times [0,1]} \wt{f}^*\omega\Big). \]
To compute the integral, note that $\wt{f}$ takes values in  $D(T')=D(T)/Z\times Z$. But the pull-back of the 2-form on $D(G)$, given by \eqref{eq:2formdouble},  to $D(T)$ is simply 
\[ \omega_{D(T)}=k\ a^*\theta_T\cdot b^*\theta_T\]
where $\theta_T\in \Omega^1(T,\t)$ is the Maurer-Cartan form on $T$. 
Hence, 
\[ \wt{f}^*\omega=k (\xi_1\cdot \zeta_2-\xi_2\cdot \zeta_1)\ \d s\wedge \d t.\]
Integrating over the square, we obtain 
\[ q(c,c')=\exp(\tpi k (\xi_1\cdot \zeta_2-\xi_2\cdot \zeta_1)).\]
In particular, $q=1$ if and only if $k\,\xi\cdot\zeta\in\Z$ for all $\xi,\zeta\in\Lambda_Z$. 
\end{proof}
By \ref{subsec:preq}\eqref{it:2e} and \eqref{eq:isomfactors}, Proposition \ref{prop:krepski} has the following immediate consequence. 
\begin{corollary}\label{cor:surface}
For a compact oriented surface of genus $g$ with one boundary component, the moduli space $M_{G'}(\Sigma_\gg^1)$ (regarded as a quasi-Hamiltonian $G$-space) is prequantizable at level $k$ 
if and only if $k$ is a multiple of the basic level $k_0$. 
\end{corollary}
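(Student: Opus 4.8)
The plan is to deduce Corollary \ref{cor:surface} directly from Proposition \ref{prop:krepski} together with the prequantization properties of fusion products. First I would invoke the factorization \eqref{eq:isomfactors}, which exhibits the moduli space $M_{G'}(\Sigma_\gg^1)$ as a $\gg$-fold fusion product of copies of the double $D(G')$:
\[ M_{G'}(\Sigma_\gg^1)\cong D(G')\times \cdots \times D(G') \qquad (\gg\text{ factors}).\]
Each factor $D(G')$ is itself a quasi-Hamiltonian $G$-space (via $D(G')=D(G)/(Z\times Z)$), and the fusion-product structure on the right-hand side is the standard one from Section \ref{subsec:basic}.

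Next I would apply property \ref{subsec:preq}\eqref{it:2e}, which states that a fusion product of two quasi-Hamiltonian spaces is prequantizable at level $k$ if and only if both factors are prequantizable at level $k$. Iterating this over the $\gg$ factors (the fusion product is associative), the $\gg$-fold product $D(G')\times\cdots\times D(G')$ is prequantizable at level $k$ if and only if every factor $D(G')$ is prequantizable at level $k$. But Proposition \ref{prop:krepski} tells us precisely that a single $D(G')$ is prequantizable at level $k$ exactly when $k$ is a multiple of the basic level $k_0$. Since this condition is identical for every factor, it holds for one factor if and only if it holds for all of them, and hence if and only if it holds for the fusion product. This immediately yields the stated equivalence for $M_{G'}(\Sigma_\gg^1)$.

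There is essentially no obstacle here: the whole argument is a bookkeeping reduction to the single-double case already settled in Proposition \ref{prop:krepski}. The only points requiring a moment's care are that the basic level $k_0$ depends only on $Z$ (through the lattice $\Lambda_Z$, via \eqref{eq:k0}) and not on $\gg$, so the prequantizability threshold is the same for each factor, and that the isomorphism \eqref{eq:isomfactors} is an isomorphism of quasi-Hamiltonian $G$-spaces compatible with the fusion structure, so that property \ref{subsec:preq}\eqref{it:2e} genuinely applies. Both of these are already recorded in the preceding text, so the corollary follows in one line once the factorization and the fusion-product criterion are in hand.
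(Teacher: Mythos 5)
Your argument is correct and is exactly the paper's proof: the corollary is stated there as an immediate consequence of the fusion-product prequantizability criterion \ref{subsec:preq}\eqref{it:2e} applied to the decomposition \eqref{eq:isomfactors}, combined with Proposition \ref{prop:krepski}. Your additional remarks on associativity and the independence of $k_0$ from $\gg$ are accurate bookkeeping points that the paper leaves implicit.
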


\subsection{Phase factors}
In this section, we discuss the phase factors that we will encounter when evaluating the fixed point contributions later on. If $k$ is a multiple of the basic level $k_0$, consider the map 
\begin{equation} \label{eq:bil1}
\Lambda_Z\times\Lambda_Z\to \U(1),\ \ 
(u,v)\mapsto e^{\tpi  k\, (1-w_*)^{-1}u\cdot v},
\end{equation}
where $w_*$ is the Coxeter transformation. 
 Since $B=k B_{\on{basic}}$ takes on integer values on $\Lambda_Z$, this expression
 takes on the value $1$ whenever one of $u$ or $v$ lies in the sublattice $(1-w_*)\Lambda_Z$. That is, \eqref{eq:bil1} descends to $\Lambda_Z/(1-w_*)\Lambda_Z$.  Using the identification 
\[(T')^{w_*}= (1-w_*)^{-1}\Lambda_Z/\Lambda_Z\cong 
\Lambda_Z/(1-w_*)\Lambda_Z,\]
this defines a bimultiplicative form 
\begin{equation}\label{eq:kappa}
 \kappa\colon (T')^{w_*}\times  (T')^{w_*}\to \U(1),\ (t_1',t_2')\mapsto \kappa(t_1',t_2')
\end{equation}
given by \eqref{eq:bil1} for $t_1'=\exp_{T'}((1-w_*)^{-1} u)$ and 
$t_2=\exp_{T'}((1-w_*)^{-1}v)$. 

If $k$ is a multiple of the level $k_1$, this descends further to a bimultiplicative form
on  $Z=\Lambda_Z/Q^\vee$.
%
%\begin{equation}\label{eq:delta}\delta\colon Z\times Z\to \U(1),\ \ (c_1,c_2)\mapsto \delta(c_1,c_2)\end{equation} 
%given by  \eqref{eq:bil1} whenever $c_1=\exp(u),\ c_2=\exp(v)$. 
Indeed, since 
$(1-w_*)^{-1} Q^\vee=P^\vee$ by Proposition \ref{prop:dwyer},  \eqref{eq:bil1} takes on the value $1$ whenever $u$ or $v$ are in $Q^\vee$. In particular, this is the case whenever $k_1=k_0$.  

If $k_1>k_0$, the map \eqref{eq:bil1} need not descend to $Z$ in general. However, 
for our applications to the 
localization formula \eqref{eq:localization},  the following more complicated statement suffices. 
\begin{lemma}\label{lem:also}
Suppose $k$ is a multiple of $k_0$, and let $c_1,c_2\in Z$ be given. If the action of $c_1,c_2$ on the set $P_k$ of level $k$ weights has at least one fixed point, then 
there is a well-defined phase factor 
\[ \delta(c_1,c_2)=e^{\tpi  k\, (1-w_*)^{-1}u\cdot v}\] 
where $u,v\in \Lambda_Z$ with $c_1=\exp(u),\ c_2=\exp(v)$.
\end{lemma}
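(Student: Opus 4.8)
The plan is to reduce the assertion to a single integrality statement about each of $c_1,c_2$ separately, and then to extract that statement from the existence of a fixed weight via the formula \eqref{eq:actiononevelkweights} for the action on $P_k$. The standing hypothesis $k_0\mid k$ is used only to guarantee that the expression \eqref{eq:bil1} is defined at all (i.e.\ that $B=kB_{\on{basic}}$ is integral on $\Lambda_Z$); the real content of the lemma is the \emph{descent} of this expression from $\Lambda_Z$ down to $Z=\Lambda_Z/Q^\vee$.

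First I would unwind what ``well defined'' means. The lifts $u,v\in\Lambda_Z$ of $c_1,c_2$ are determined only modulo $Q^\vee$, so I must show that the exponent $k\,(1-w_*)^{-1}u\cdot v$ is unchanged modulo $\Z$ when $u$ or $v$ is altered by an element of $Q^\vee$. Replacing $u$ by $u+\beta$ with $\beta\in Q^\vee$ changes the exponent by $k\,(1-w_*)^{-1}\beta\cdot v$; since $(1-w_*)^{-1}Q^\vee=P^\vee$ by Proposition \ref{prop:dwyer}, this equals $k\,p\cdot v$ with $p$ ranging over all of $P^\vee$, so invariance under change of $u$ is equivalent to $k\,v\cdot P^\vee\subset\Z$. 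Replacing $v$ by $v+\gamma$ with $\gamma\in Q^\vee$ changes the exponent by $k\,(1-w_*)^{-1}u\cdot\gamma=k\,u\cdot(1-w_*^{-1})^{-1}\gamma$, using that $w_*$ is orthogonal for the basic inner product, so that $(1-w_*)^{-1}$ has adjoint $(1-w_*^{-1})^{-1}$. As $w_*^{-1}$ is again a Coxeter element, Proposition \ref{prop:dwyer} gives $(1-w_*^{-1})^{-1}Q^\vee=P^\vee$, and invariance under change of $v$ becomes $k\,u\cdot P^\vee\subset\Z$. Each of these two conditions is itself independent of the chosen lift, because $k\,Q^\vee\cdot P^\vee\subset\Z$ by \eqref{eq:basicfact}. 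Thus the lemma reduces to the following claim, applied to $c=c_1$ and to $c=c_2$: \emph{if $c\in Z$ fixes some $\lambda\in P_k$ under $\bullet_k$, and $u\in\Lambda_Z$ is any lift of $c$, then $k\,u\cdot P^\vee\subset\Z$.}

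To prove this claim I would feed the fixed-point equation into \eqref{eq:actiononevelkweights}. From $c\bullet_k\lambda=w_c(\lambda-k\zeta_{c^{-1}})=\lambda$ one gets, after applying $w_c^{-1}$,
\[ k\,\zeta_{c^{-1}}=(1-w_c^{-1})\lambda. \]
Since $\exp(-\zeta_{c^{-1}})=c$, the element $-\zeta_{c^{-1}}$ is a lift of $c$, so it suffices to check $k\,\zeta_{c^{-1}}\cdot P^\vee\subset\Z$. For $p\in P^\vee$, invariance of the basic inner product under $w_c$ gives
\[ k\,\zeta_{c^{-1}}\cdot p=(1-w_c^{-1})\lambda\cdot p=\lambda\cdot(1-w_c)p. \]
The crucial point is now purely lattice-theoretic: the Weyl group acts trivially on $P^\vee/Q^\vee\cong Z(G)$, so $(1-w_c)p\in Q^\vee$; and for $\lambda\in P$ the quantity $\lambda\cdot q$ is nothing but the integral pairing $\langle\lambda,q\rangle\in\Z$ between $P$ and $Q^\vee$. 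Hence $\lambda\cdot(1-w_c)p\in\Z$, which proves the claim.

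The one subtlety I would be careful to avoid is circularity: one must keep $\lambda$ as an honest weight in $P$ and invoke $\langle P,Q^\vee\rangle\subset\Z$, rather than substituting $\zeta_\lambda=(\lambda+\rho)/(k+\cox)$ and trying to clear the denominator $k+\cox$, which goes nowhere. With the claim in hand, a weight $\lambda\in P_k$ fixed by both $c_1$ and $c_2$ supplies $k\,u\cdot P^\vee\subset\Z$ and $k\,v\cdot P^\vee\subset\Z$ simultaneously, and the two reduction steps then show $\delta(c_1,c_2)$ is independent of the lifts. I regard the identity $k\,\zeta_{c^{-1}}=(1-w_c^{-1})\lambda$ extracted from the fixed point as the heart of the argument; everything else is bookkeeping.
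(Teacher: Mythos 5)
Your proof is correct, and it takes a genuinely different route from the paper's. The paper makes the same initial reduction — well-definedness of $\delta(c_1,c_2)$ amounts to showing that if $c\in Z$ fixes some weight in $P_k$ and $u\in\Lambda_Z$ lifts $c$, then $k\,u\cdot P^\vee\subset\Z$ — but it then disposes of this claim by noting it is automatic when $k_1=k_0$ (by definition of $k_1$) and, when $k_1>k_0$, by a type-by-type computation in Appendix \ref{app:b}: for the three $\Z_2$-subgroups in type $D_l$ one checks directly that the fixed-point set on $P_k$ is empty unless $k$ is even, and for type $A_l$ one extracts from the explicit action \eqref{eq:act} the necessary congruence $Nk\equiv 0 \bmod (l+1)$ and verifies integrality by hand. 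You instead prove the claim uniformly in all types: from $c\bullet_k\lambda=\lambda$ and \eqref{eq:actiononevelkweights} you extract $k\,\zeta_{c^{-1}}=(1-w_c^{-1})\lambda$, so that for $p\in P^\vee$, orthogonality of $w_c$ gives $k\,\zeta_{c^{-1}}\cdot p=\lambda\cdot(1-w_c)p$, which is an integer because $W$ acts trivially on $P^\vee/Q^\vee$ (indeed $s_ip=p-\langle\alpha_i,p\rangle\alpha_i^\vee$ with $\langle\alpha_i,p\rangle\in\Z$) and $\langle P,Q^\vee\rangle\subset\Z$; the passage from the distinguished lift $-\zeta_{c^{-1}}$ to an arbitrary lift costs only $k\,Q^\vee\cdot P^\vee\subset\Z$, i.e.\ \eqref{eq:basicfact}. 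Your bookkeeping for the two lift-changes is also sound: the transpose of $(1-w_*)^{-1}$ is $(1-w_*^{-1})^{-1}$, and $(1-w_*^{-1})^{-1}Q^\vee=P^\vee$ follows from Proposition \ref{prop:dwyer} since $1-w_*^{-1}=-w_*^{-1}(1-w_*)$ and $w_*Q^\vee=Q^\vee$. As for what each approach buys: your argument eliminates the case analysis entirely and in fact gives a conceptual, type-free proof of the lemma in Appendix \ref{app:b}, which is cleaner and more robust; the paper's explicit computations, on the other hand, yield the sharper by-product that in the exceptional $D_l$ (and certain $A_l$) situations the fixed-point set is actually \emph{empty} when $k_1\nmid k$ — information exploited elsewhere (the remark on vanishing correction terms and the divisibility of Verlinde numbers by $\# Z^{2\gg}$) that your uniform argument does not by itself recover.
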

To prove this lemma, we must verify that the exponential does not depend on the choice of $u,v$. Equivalently, we must show that  if the action of 
$c\in Z$ on $P_k$ has a fixed point, and $c=\exp(u)$ for $u\in \Lambda_Z$, 
then $k_0\ u\cdot P^\vee\subset \Z$. Note that for $k_1=k_0$, the existence of a fixed point  is irrelevant, by definition of $k_1$.  If 
$k_1>k_0$, one can prove the lemma by a  case-by-case calculation, 
which is done in the Appendix. 

\begin{remark}
Note that the phase factor $\delta(c_1,c_2)$ does not depend on the choice of Coxeter element $w_*$. In fact, replacing $w_*$ with its conjugate under a Weyl group element $w$ amounts to replacing $u,v$ with their images under $w^{-1}$. But $\exp(w^{-1} u)=\exp(u)$, since $\exp(u)$ is in the center, and similarly for $v$. 
\end{remark}

One may compute the phase factors $\delta$ case-by-case, using e.g., the tables in Bourbaki \cite{bou:li2}. Here are some results of such computations (for details, see Appendix \ref{app:c}):
\begin{examples}
\begin{enumerate}
\item Let $G=A_l$, with $Z=\Z_m$ where $m$ divides $l+1$. Let $c\in Z$ be a generator, and 
suppose $k$ is a multiple of $k_0$, so that $k(l+1)$ is divisible by $m^2$. By explicit calculation, one obtains
\[\label{eq:cc} \delta(c,c)=(-1)^{l\f{k(l+1)}{m^2}},\]
while of course $\delta(c^r,c^s)=\delta(c,c)^{rs}$. In particular, if $l$ is even, then 
$\delta=1$ for all $k$. 
\item 
For $G=B_l$, thus $k_0=1$, and $c\in Z=\Z_2$ the non-trivial element, we have  
\[\delta(c,c)=(-1)^k.\]
\item For $G=C_l$, we have $\delta=1$ for all $k$. 
\item For $G=D_l$ with $l$ odd, $Z=Z(G)=\Z_4$, and $k$ a multiple of $k_0=4$, 
\[ \delta(c,c)=(-1)^{\f{k}{4}}\]
where $c$ is the generator of $\Z_4$. 
\end{enumerate}
\end{examples}

\subsection{Classifying the prequantizations of $D(G/Z)$ }\label{subsec:label}
Suppose that $k$ is a multiple of $k_0$ so that $D(G')$ is prequantizable at level $k$. 
According to \ref{subsec:preq}\eqref{it:2c}, the inequivalent prequantizations are a torsor under $\Hom(Z\times Z,\U(1))$. We would like to have a way of distinguishing these prequantizations. To this end, we consider the symplectic submanifold $D(T')\subset D(G')$. Since this is contained in $\Phinv(e)$, a prequantization of $D(G')$ gives rise to 
a $W$-equivariant prequantum line bundle over $D(T')$. We observe that the latter determines the former; in fact it suffices to look at the action of the Coxeter element $w_*$. 
\begin{proposition}
Any prequantization of $D(G')$ gives rise to a $W$-equivariant prequantum line bundle 
$L'\to D(T')$. Furthermore:
\begin{enumerate}
\item \label{it:b}
If $m\in D(T')$ is a $W$-fixed point, then the $W$-action on the fiber $L'|_m$ is trivial.
\item \label{it:a}
The prequantization of $D(G')$ is uniquely determined by the action of the Coxeter element $w_*$ on the restriction of $L'$ to the fixed point set $D(T')^{w_*}$.
\end{enumerate}
\end{proposition}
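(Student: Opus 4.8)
We have $D(G') = D(G)/(Z \times Z)$, a quasi-Hamiltonian $G$-space prequantized at level $k$. The fundamental group is $\pi_1(D(G')) = Z^2$. The proposition claims:
1. A prequantization gives a $W$-equivariant prequantum line bundle $L' \to D(T')$.
2. If $m$ is $W$-fixed, then $W$ acts trivially on $L'|_m$.
3. The prequantization is determined by the $w_*$-action on $L'$ restricted to $D(T')^{w_*}$.

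**Setting up the structure.** $D(T') \subset D(G')$ lies in $\Phi^{-1}(e)$ (since $\Phi$ is the group commutator, and $T'$ is abelian so commutators are trivial). So $D(T') \subset M_0$. Via property (2h), the prequantization $x$ gives $x_0 \in H^2(M_0)$, hence a prequantum line bundle $L_0 \to M_0$, and restricting gives $L' \to D(T')$. The $W$-action on $D(T')$ (coming from $N(T)/T$ acting by conjugation) lifts to $L'$ since the prequantization is $G$-equivariant, giving the $W$-equivariant structure.

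**Key idea.** The set of prequantizations is a torsor under $\Hom(Z^2, U(1))$. To prove (3), I need to show that the map from prequantizations to "$w_*$-actions on $L'|_{D(T')^{w_*}}$" is injective. Since the torsor structure is over $\Hom(Z^2, U(1))$, it suffices to show that two prequantizations differing by a nontrivial character $\chi \in \Hom(Z^2, U(1))$ give *different* $w_*$-actions on $L'|_{D(T')^{w_*}}$.

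**My plan:**

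First, I would establish the geometric setup: verify $D(T') \subset \Phi^{-1}(e)$, apply property (2h) to get $L_0$ and restrict to get the $W$-equivariant line bundle $L'$. The $W$-equivariance follows from $G$-equivariance of the Kostant prequantization lift, since $W = N(T)/T$ acts through conjugation by representatives in $N(T) \subset G$.

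Second, for part (a)/(b): the question of *how* $w$ acts on a fiber $L'|_m$ is a representation-theoretic/holonomy question. When two prequantizations differ by $\chi \in \Hom(Z^2, U(1)) = \Hom(\pi_1(D(G')), U(1))$, the corresponding flat line bundle's effect on the $w_*$-action at a fixed point $m'$ of $w_*$ is computed by evaluating $\chi$ on the element of $\pi_1$ traced out by moving $m'$ along the path $w_*$ acts by. The key computation is: the $w_*$-action at a point $m' \in D(T')^{w_*}$ changes by $\chi$ evaluated on the loop $[0,1] \ni s \mapsto w_*^{(s)} \cdot m'$ where $w_*^{(s)}$ interpolates. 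Concretely, lifting $m' = (\exp_{T'}\xi_1, \exp_{T'}\xi_2)$ and letting $w_*$ act via a path in $N(T)$, the resulting loop in $D(T')$ represents the class $((1-w_*)\xi_1, (1-w_*)\xi_2) \bmod (\text{lattice})$ in $\pi_1$. This is where the formula $(1-w_*)P^\vee = Q^\vee$ from Proposition \ref{prop:dwyer} and the description $(T')^{w_*} = (1-w_*)^{-1}\Lambda_Z/\Lambda_Z$ enter.

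Third, the crux: I would show the map $\chi \mapsto (\text{change in } w_*\text{-action on } L'|_{D(T')^{w_*}})$ is *injective* on $\Hom(Z^2, U(1))$. The change in $w_*$-action at the fixed point $m'_{(\bar u, \bar v)}$ corresponding to $(\bar u, \bar v) \in ((T')^{w_*})^2$ is $\chi$ evaluated on $(u \bmod \Lambda_Z^2)$-type classes. Since $(T')^{w_*} = (1-w_*)^{-1}\Lambda_Z/\Lambda_Z \cong \Lambda_Z/(1-w_*)\Lambda_Z$ surjects onto $Z$ by the exact sequence \eqref{eq:exactsequence}, as $(\bar u, \bar v)$ ranges over $((T')^{w_*})^2$ the relevant pairings detect all of $\Hom(Z^2, U(1))$. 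So distinct $\chi$ produce distinct $w_*$-actions, proving injectivity and hence (a). Part (b) is the special case: at a $W$-fixed point all loops $s \mapsto w \cdot m'$ are contractible (the point being fixed by all of $W$, the relevant lattice element $(1-w)\xi$ lies in a trivial class), forcing trivial action.

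\medskip

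\emph{Main obstacle.} The hardest part will be the precise bookkeeping in the second and third steps: correctly identifying the homotopy class in $\pi_1(D(G')) = Z^2$ of the loop traced by the $w_*$-action at a fixed point, and matching the resulting pairing with the surjection $(T')^{w_*} \to Z$ of \eqref{eq:exactsequence}. One must be careful that the $w_*$-action on $L'|_{m'}$ is well-defined (independent of the $N(T)$-representative and the interpolating path modulo the lattice), and that the change under $\chi$ is genuinely the evaluation of $\chi$ on the correct class. The surjectivity of $(T')^{w_*} \to Z$ is exactly what guarantees injectivity of the map on characters, so Proposition on the fixed-point structure does the essential work; the remaining labor is verifying the homotopy-class identification carefully rather than any deep new idea.
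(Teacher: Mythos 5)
Your geometric setup and your proof of the uniqueness claim (the second item) follow essentially the same route as the paper: the difference of two prequantizations is the flat bundle $D(T)\times_\phi\C\to D(T')$ for $\phi\in\Hom(Z\times Z,\U(1))$; the $w_*$-action on the fiber at a fixed point $(t_1',t_2')$ changes by $\phi(c_1,c_2)$, where the lifts satisfy $w_*t_i=t_ic_i^{-1}$ (your monodromy description via the loop class $\big((1-w_*)\xi_1,(1-w_*)\xi_2\big)=(c_1,c_2)$ is the same computation, phrased through holonomy instead of the paper's direct calculation in the associated bundle); and injectivity on characters holds because every pair $(c_1,c_2)\in Z\times Z$ is realized at some fixed point, which is exactly the surjectivity of $(T')^{w_*}\to Z$ from the exact sequence \eqref{eq:exactsequence}. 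Up to packaging, this part is the paper's argument.

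The genuine gap is in your treatment of the triviality claim (the first item), and it is twofold. First, your contractible-loop argument, even where it applies, only shows that the scalar by which $w$ acts on $L'|_m$ is \emph{independent of the choice of prequantization} — the change, being $\chi$ evaluated on a trivial class, equals $1$ — but it does not show that the scalar itself equals $1$. You need an absolute anchor, and this is precisely what the paper supplies and your sketch omits: points of $D(Z(G)/Z)$ are $G$-fixed points of $D(G')_0$ lying in $\Phi_0^{-1}(0)$, so Kostant's formula gives the weight of the $G$-action on the fiber of $L_0$ there as $2\pi i\langle\Phi_0(m),\cdot\,\rangle=0$; since $W$ acts through representatives in $N(T)\subset G$, the $W$-action on those fibers is trivial. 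Without this input, the common value of the scalar across all prequantizations could a priori be any fixed element of $\U(1)$.

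Second, your parenthetical justification — ``the point being fixed by all of $W$, the relevant lattice element $(1-w)\xi$ lies in a trivial class'' — is false as stated: being fixed by all of $W$ in $T'$ does not force the class to be trivial. For $G=\SU(2)$, $Z=Z(G)$, the point $t'=\exp_{T'}(\hh\varpi_1^\vee)$ is fixed by all of $W=\{1,w_*\}$, yet $(1-w_*)(\hh\varpi_1^\vee)=\varpi_1^\vee$ represents the nontrivial class of $Z=P^\vee/Q^\vee$; correspondingly, by Proposition \ref{prop:label} the $w_*$-action at $(t',t')$ is $\kappa(t',t')=i^k\neq 1$ for $k\equiv 2 \bmod 4$, and it genuinely changes with the prequantization. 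What is true, and what the paper's proof actually establishes (and later uses), is triviality over $D(Z(G)/Z)$, i.e., at points whose \emph{lifts to $D(T)$} are $W$-fixed: there one may choose $\xi_i\in P^\vee$, and $(1-w)P^\vee\subset Q^\vee$ makes the loop class trivial — after which the Kostant-formula anchor above finishes the argument. So your plan for the first item needs both the corrected fixed-point interpretation and the missing absolute computation.
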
 
\begin{proof}
Let $D(G')_0$ be the Hamiltonian $G$-space as in \ref{subsec:basic}\eqref{it:1d}. 
Given a prequantization of $D(G')$, we obtain by \ref{subsec:preq}\eqref{it:2h} a $G$-equivariant prequantum line bundle $L_0\to D(G')_0$. Elements of $D(Z(G)/Z)\subset D(G')_0$ are $G$-fixed points, and by Kostant's formula for prequantizations the $G$-action on the fiber at such points is trivial. 
Since $D(T')$ is contained in the identity level set of the moment map, it is contained in 
$D(G')_0$, and the pull-backs of $\omega_0$ and of $\omega$ to $D(T')$ coincide. By restriction, the symplectic submanifold $D(T')$ inherits a $W$-equivariant prequantum line bundle $L'\to D(T')$, 
where the $W$-action at fibers over $D(Z(G)/Z))\subset D(T')$ is trivial. 
 This proves \eqref{it:b}. 

As for \eqref{it:a}, note that any two prequantizations of $D(G')$ differ
by some $\phi\in \Hom\big(Z\times Z,\U(1)\big)$. The line bundle $L_0'\to D(G')_0$ 
changes by tensor product with  the flat line bundle 
$D(G)_0\times_{\phi}\C\to D(G')_0$, and hence $L'$ by tensor product with 
\begin{equation}\label{eq:flat} D(T)\times_{\phi}\C\to D(T').\end{equation}  
Hence, it suffices to  show that $\phi$ is uniquely determined by the action of $w_*$ on the  restriction of \eqref{eq:flat} 
to $ D(T')^{w_*}$.   Given $c_1,c_2\in Z$, choose $t_i'\in T'$ such that the lifts $t_i$ satisfy $w_*(t_i)=t_i c_i^{-1}$. The action of $w_*$ on an element
 $[(t_1,t_2,1)]$ in \eqref{eq:flat}  is calculated as follows: 
\[ w_*[(t_1,t_2,1)]=[(w_*t_1,w_*t_2,1)]=[(t_1 c_1^{-1},t_2 c_2^{-1},1)]=[(t_1,t_2,\phi(c_1,c_2))].\]
This shows that we can recover $\phi$ from the action at fixed points. 
\end{proof}

We are now in a position to classify the prequantizations of $D(G')$:

\begin{proposition}\label{prop:label}
Suppose $k$ is a multiple of the basic level $k_0$. 
\begin{enumerate}
\item\label{it:3a}
There exists a unique level $k$ prequantization of $D(G')$ with the property that the 
action of $w_*$ on the fiber of $L'\to D(T')$ at $(t_1',t_2')\in D(T')^{w_*}$ is given by $\kappa(t_1',t_2')$. 
\item \label{it:3b}
Any other prequantization of $D(G')$ is obtained from that in \eqref{it:3a} by some $\phi=(\phi_1,\phi_2)\in \Hom(Z\times Z,\U(1))$. 
The phase factor for the action of $w_*$ on the fiber at $(t_1',t_2')$ changes to 
\[ \phi_1(c_1)\ \phi_2(c_2)\ \kappa(t_1',t_2')\]
where $w_* t_i=t_i c_i^{-1}$. 
\end{enumerate}
\end{proposition}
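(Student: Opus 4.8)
The plan is to prove part \eqref{it:3a} first by exhibiting the desired prequantization, and then to derive \eqref{it:3b} as a nearly immediate consequence of the torsor structure already established. Recall that by Proposition \ref{prop:krepski} a level $k$ prequantization of $D(G')$ exists whenever $k$ is a multiple of $k_0$, and by \ref{subsec:preq}\eqref{it:2a} the set of prequantizations is a torsor under $\Hom(\pi_1(D(G')),\U(1))=\Hom(Z\times Z,\U(1))$. The preceding proposition tells us that any prequantization is determined by the action of $w_*$ on the restriction of $L'\to D(T')$ to the fixed point locus $D(T')^{w_*}$, and that passing from one prequantization to another by $\phi=(\phi_1,\phi_2)$ multiplies this $w_*$-action at $(t_1',t_2')$ by $\phi_1(c_1)\phi_2(c_2)$, where $w_*t_i=t_ic_i^{-1}$.

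Granting this, the content of \eqref{it:3b} is purely the statement that the map sending a prequantization to the function $(t_1',t_2')\mapsto (\text{$w_*$-action at }(t_1',t_2'))$ is equivariant for the two torsor structures, which is exactly the last formula in the preceding proof. So the real work is \eqref{it:3a}: I must show that among all prequantizations, there is one (hence, by rigidity, exactly one) whose $w_*$-phase equals $\kappa(t_1',t_2')$. First I would fix an arbitrary level $k$ prequantization and let $\kappa_0(t_1',t_2')$ denote its actual $w_*$-phase at $(t_1',t_2')\in D(T')^{w_*}$. The strategy is to show that the ratio $\kappa/\kappa_0$ is realized by some $\phi\in\Hom(Z\times Z,\U(1))$; then twisting the chosen prequantization by that $\phi$ produces the required one, and uniqueness is forced by \eqref{it:a} of the previous proposition.

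To carry this out I would compute $\kappa_0$ via the integral formula of \ref{subsec:preq}\eqref{it:2i} together with the description of the Coxeter action, in the same spirit as the proof of Proposition \ref{prop:krepski}: the $w_*$-action on the restricted line bundle over the symplectic submanifold $D(T')$ is computed by integrating $\omega|_{D(T')}=k\,a^*\theta_T\cdot b^*\theta_T$ along a path in $D(T')$ joining a base point to its $w_*$-image, using that a fixed point $(t_1',t_2')$ lifts to $(t_1,t_2)$ with $w_*t_i=t_ic_i^{-1}$ and that $t_i'=\exp_{T'}((1-w_*)^{-1}u_i)$ for $u_i\in\Lambda_Z$. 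The key point is that $w_*$ acts by $\Ad_{t_*}$ and that the line bundle is trivialized at the $W$-fixed points $D(Z(G)/Z)$ by part \eqref{it:b}, so the holonomy reduces to the bilinear pairing $e^{\tpi k(1-w_*)^{-1}u_1\cdot u_2}=\kappa(t_1',t_2')$ up to a factor of the form $\phi_1(c_1)\phi_2(c_2)$. I expect the main obstacle to be precisely this last bookkeeping: pinning down that the discrepancy $\kappa/\kappa_0$ is genuinely \emph{bimultiplicative and character-valued in each argument separately}, i.e.\ factors as $\phi_1(c_1)\phi_2(c_2)$ with $\phi_i\in\Hom(Z,\U(1))$, rather than being an arbitrary function on $D(T')^{w_*}$. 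This should follow from the bimultiplicativity of $\kappa$ established in the discussion preceding \eqref{eq:kappa} together with the corresponding multiplicativity of $\kappa_0$ (the $w_*$-action on fibers is multiplicative in the fixed point, being a linear action on line bundles), so that their ratio descends to a homomorphism on $(T')^{w_*}$ in each slot, which by exactness of \eqref{eq:exactsequence} and the triviality of the $W$-action over $Z(G)/Z$ from \eqref{it:b} factors through the quotient $Z$.
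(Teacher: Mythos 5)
Your overall architecture --- fix a base prequantization, show that the discrepancy between its $w_*$-phase $\kappa_0$ and the target $\kappa$ is realized by some $\phi\in \Hom(Z\times Z,\U(1))$, then twist --- is exactly the logic underlying the paper's proof, and your reduction of part (b) to the preceding proposition is fine. But the step you yourself flag as the crux is where the argument fails. The claim that $\kappa_0$ is ``multiplicative in the fixed point, being a linear action on line bundles'' is false: the fibers of $L'$ at distinct points of $D(T')^{w_*}$ are not canonically identified, so the phase function of an equivariant line bundle carries no a priori multiplicativity. Indeed the final answer itself, $\phi_1(c_1)\phi_2(c_2)\kappa(t_1',t_2')$, is in general neither multiplicative nor bimultiplicative in either slot (multiplying two points in the first slot duplicates the factor $\phi_2(c_2)$). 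Moreover, even if the ratio $\kappa/\kappa_0$ were bimultiplicative, that would not yield what you need: a bimultiplicative form on $(T')^{w_*}\times (T')^{w_*}$ --- such as $\kappa$ itself --- is generally \emph{not} a character of the product group, whereas the split form $\phi_1(c_1)\phi_2(c_2)$ is; the implication ``bimultiplicative $\Rightarrow$ factors as $\phi_1(c_1)\phi_2(c_2)$'' is invalid. Nothing formal excludes a priori that the phase of an arbitrary prequantization differs from $\kappa$ by some other bimultiplicative form not congruent to $1$ modulo characters of $Z\times Z$; ruling this out is precisely the computational content of part (a), and it cannot be obtained by bookkeeping.

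What the paper does instead --- and what your sketch gestures at without executing --- is to pass to the universal cover $\t\times\t$ of $D(T')$, where the prequantum line bundle is the trivial bundle with the explicit connection $\pi i k\,(q\cdot dp-p\cdot dq)$, and to classify \emph{all} connection-preserving lifts of the deck-and-Weyl group $(\Lambda_Z\times\Lambda_Z)\rtimes W$: the $W$-lift trivial on the fiber at the origin is unique (a legitimate normalization by part \eqref{it:b} of the preceding proposition), the lifts of the translations involve scalars $\sigma(u,v)$, and the group law of the semi-direct product forces $\psi(u,v)=\sigma(u,v)e^{\pi i k\,u\cdot v}$ to be a $W$-invariant homomorphism $\psi=\psi_1\psi_2$. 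Evaluating the element $((-u,-v),w_*)$ at the lifted fixed point $\big((1-w_*)^{-1}u,(1-w_*)^{-1}v\big)$ then \emph{produces} the split form: the phase is $\psi_1(u)^{-1}\psi_2(v)^{-1}\kappa(t_1',t_2')$. It is this computation, not an abstract multiplicativity, that delivers part (a), with descent of $\psi_i$ from $\Lambda_Z/(1-w_*)\Lambda_Z$ to $Z=\Lambda_Z/Q^\vee$ secured along the lines you correctly indicate: at points of $D(Z(G)/Z)$ one has $u,v\in Q^\vee$, so $(1-w_*)^{-1}u\in P^\vee$ and $\kappa=1$ there by \eqref{eq:basicfact}, whence triviality of the action over $D(Z(G)/Z)$ forces $\psi_i$ to kill $Q^\vee=(1-w_*)P^\vee$. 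Two smaller inaccuracies in your sketch: the map of \ref{subsec:preq}\eqref{it:2i} is the wrong tool (it produces the class $c_1(\L_F)$, not equivariant phases; the phase comes from the line bundle $L_0$ of \ref{subsec:preq}\eqref{it:2h} via Kostant's formula), and ``integrating $\omega|_{D(T')}$ along a path'' does not typecheck --- the honest version is parallel transport for the explicit connection on the cover, which is exactly the computation above.
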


\begin{proof}
The symplectic form on $D(T')$ is given by $k \pr_1^* \theta_{T'}\cdot \pr_2^*\theta_{T'}$; hence, its pull-back to $\t\times\t$ is the symplectic form $k dq\cdot dp$,\ where $\d q,\ \d p\in \Omega^1(\t,\t)$ are  the `tautological' 1-forms for the two $\t$-factors. It has a prequantum line bundle, unique up to isomorphism, given by the 
trivial line bundle $(\t\times\t)\times \C$ with the connection 1-form
\[ \pi i  k (q\cdot dp-p\cdot dq).\]
Given a prequantization of $D(G')$, resulting in a prequantum line bundle $L'$ over $D(T')=T'\times T'$, there is a connection-preserving isomorphism between the
pullback of $L'$ and this  trivial line bundle. In particular, the latter inherits an action 
of the semi-direct product $(\Lambda_Z\times \Lambda_Z)\rtimes W$ by connection-preserving automorphisms. But the only connection-preserving lift of the $W$-action, in such a way that  the action on the fiber at $(0,0)$ is trivial, is the obvious action 
\[  w.(q,p,z)=(wq,wp,z),\]
while the  most general  connection-preserving lift of the map $(q,p)\mapsto (q-u,p-v)$ for $u,v\in \Lambda_Z$ is of the form 
\[ (q,p,z)\mapsto \Big( q-u,p-v,\sigma(u,v)
e^{\pi i k\, (p\cdot u-q\cdot v)}\Big)\]
for some scalars $\sigma(u,v)\in \U(1)$. The condition that this defines a group action of the semi-direct product  $(\Lambda_Z\times \Lambda_Z)\rtimes W$ is that the map 
\[ \psi(u,v)=\sigma(u,v)\ e^{\pi i k\, u\cdot v}\]
defines a group homomorphism $\psi\colon \Lambda_Z\times \Lambda_Z\to \U(1)$, which furthermore is $W$-invariant. (In checking the homomorphism property, note that $e^{\pi i  k u\cdot v}=\pm 1$.)  Write $\psi(u,v)=\psi_1(u)\psi_2(v)$ with $W$-invariant maps $\psi_i\colon \Lambda_Z\to \U(1)$. The $W$-invariance of the $\psi_i$ 
means in  particular that they descend to the quotient $\Lambda_Z/(1-w_*)\Lambda_Z\cong (T')^{w_*}$.  

Let $u,v\in \Lambda_Z$, and put $\xi=(1-w_*)^{-1}u,\ (1-w_*)^{-1}v$ so that
$(t_1',t_2')=(\exp_{T'}\xi,\exp_{T'}\zeta)$ is a $w_*$-fixed point. 
The point $(\xi,\zeta)\in \t\times \t$ is fixed under $((-u,-v),w_*)\in (\Lambda_Z\times \Lambda_Z)\rtimes W$, and its action on the fiber of $(\t\times \t)\times \C$ at that point coincides with the action of $w_*$ on the fiber of its image in $D(T')$. We compute:  
\begin{align*}
(-u,-v).w_*.(\xi,\zeta,1)&=(-u,-v).(\xi-u,\zeta-v,1)\\
&=\big(\xi,\zeta,\psi(-u,-v)\,e^{\pi i k\,((\zeta-v)\cdot(-u)-(\xi-u)\cdot(-v)-u\cdot v )}\big)\\
&= \big(\xi,\zeta,\psi_1(u)^{-1}\psi_2(v)^{-1}
e^{2\pi i k \ \xi\cdot (1-w_*)\zeta}\big)\\
&= \big(\xi,\zeta,\psi_1(u)^{-1}\psi_2(v)^{-1}
\kappa(t_1',t_2')\big).\\
\end{align*}
Hence, the action of $w_*$ on the fiber at $(t_1',t_2')$ is given by the scalar, 
$ \psi_1(u)^{-1}\psi_2(v)^{-1}
\kappa(t_1',t_2')$. 
Writing $\phi_i=\psi_i^{-1}$, this proves the proposition.  
\end{proof}

In the following section, we will consider a maximal torus $U$ in apposition to $T$, with $\Ad_{t_*}$ acting as the Coxeter transformation. Proposition \ref{prop:label}, with $U'=U/Z$ playing the role of $T'$, shows that any prequantization of $D(G')$ defines a $N(U)/U$-equivariant prequantum line bundle $L'\to D(U')$, and that there is a unique prequantization such that for 
$(u_1',u_2')\in D(U')^{t_*}$, the weight for the action of $t_*$ on $L'_{u_1,u_2}$ is given by $\kappa(u_1',u_2')$.

\subsection{Computation of the fixed point contributions}\label{subsec:fixed}
Suppose $k$ is a multiple of the basic level $k_0$, and let $x\in H^3(\Phi,\Z)$ 
be a given level $k$ prequantization of the quasi-Hamiltonian $G$-space $D(G')$, 
with resulting quantization 
\[ \ca{Q}(D(G'))\in R_k(G').\]
Its values at $t=t_\lambda=\exp(\zeta_\lambda)$ with $\lambda\in P_k$ are given by the fixed point formula  
\eqref{eq:localization}. In this section, we will work out the contributions 
\begin{equation}\label{eq:ffixed}
\chi_F(t)^{1/2} \int_F \frac{\widehat{A}(F)\
\exp(\hh c_1(\ca{L}_F))}{D_\R(\nu_F,t)}\end{equation}
from components $F\subset D(G')^t$.  
Proposition \ref{prop:kostant3}, together with Lemma \ref{lem:equivariant} shows: 
\begin{proposition}\label{prop:structurefixedpointset}
The fixed point set for the action of $t$ on $D(G')$ is given by 
	\begin{equation}\label{eq:tstarfixed1}
	D(G')^{t}=\bigcup_{\stackrel{c_1,c_2\in Z}{c_i\bullet_k\lambda=\lambda}}  N(T')^{(c_1)}\times N(T')^{(c_2)},\end{equation}
	where $N(T')^{(c)}=N(T)^{(c)}/Z$ for $c\in Z$. 
\end{proposition}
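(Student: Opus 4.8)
The plan is to reduce everything to the single-factor statement of Proposition \ref{prop:kostant3} and then translate the resulting stabilizer condition into the weight condition using the equivariance Lemma \ref{lem:equivariant}.

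First I would observe that the residual $G$-action on $D(G')=G'\times G'$ is the diagonal conjugation action, so the element $t$ (acting through its image in $G'=G/Z$) acts factorwise by $\Ad_t$. Hence $(a',b')$ is fixed by $t$ precisely when each of $a'$ and $b'$ is fixed by $\Ad_t$ on $G'$, giving
\[ D(G')^{t} = (G')^{t} \times (G')^{t}. \]
Since $t=\exp(\zeta_\lambda)$ with $\zeta_\lambda\in\on{int}(\Alc)$, Proposition \ref{prop:kostant3} applies directly and yields
\[ (G')^{t} = \bigcup_{c\in Z_{\zeta_\lambda}} N(T')^{(c)}, \]
where $Z_{\zeta_\lambda}$ is the stabilizer of $\zeta_\lambda$ under the affine action \eqref{eq:affineaction} and $N(T')^{(c)}=N(T)^{(c)}/Z$. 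Taking the product of this description with itself expresses $D(G')^{t}$ as the union over pairs $(c_1,c_2)\in Z_{\zeta_\lambda}\times Z_{\zeta_\lambda}$ of the sets $N(T')^{(c_1)}\times N(T')^{(c_2)}$, which is exactly the right-hand side of \eqref{eq:tstarfixed1} once $Z_{\zeta_\lambda}$ is identified with $\{c\in Z\mid c\bullet_k\lambda=\lambda\}$.

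The remaining step is precisely that identification. By Lemma \ref{lem:equivariant} the affine action satisfies $c.\zeta_\lambda=\zeta_{c\bullet_k\lambda}$, so the condition $c\in Z_{\zeta_\lambda}$, i.e.\ $c.\zeta_\lambda=\zeta_\lambda$, is equivalent to $\zeta_{c\bullet_k\lambda}=\zeta_\lambda$. Because $c\bullet_k\lambda$ again lies in $P_k$ and the assignment $\mu\mapsto \zeta_\mu=(\mu+\rho)/(k+\cox)$ is injective on $P_k$, this equality holds if and only if $c\bullet_k\lambda=\lambda$. Substituting this description of $Z_{\zeta_\lambda}$ into the displayed union completes the argument.

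I do not anticipate a serious obstacle: the statement is a direct concatenation of the already-established Proposition \ref{prop:kostant3} with the equivariance Lemma \ref{lem:equivariant}. The only points requiring a moment of care are the factorwise behavior of the diagonal conjugation action on the double—ensuring the fixed-point set genuinely splits as a product—and the injectivity of $\mu\mapsto\zeta_\mu$, which is what legitimizes passing from $\zeta_{c\bullet_k\lambda}=\zeta_\lambda$ back to $c\bullet_k\lambda=\lambda$.
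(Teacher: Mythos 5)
Your proof is correct and follows essentially the same route as the paper, whose entire argument is the remark that Proposition \ref{prop:kostant3} together with Lemma \ref{lem:equivariant} yields the statement. The details you make explicit---the factorwise splitting $D(G')^{t}=(G')^{t}\times (G')^{t}$ under the diagonal conjugation action, and the injectivity of $\mu\mapsto\zeta_\mu=(\mu+\rho)/(k+\cox)$ on $P_k$, which converts $c\in Z_{\zeta_\lambda}$ into $c\bullet_k\lambda=\lambda$---are exactly the points the paper leaves implicit.
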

 
Let $F\subset D(G')^t$ be the component indexed by $c_1,c_2$. Being a component of 
$D(N(T'))$, it is obtained from $D(T')\cong T'\times T'\subset D(G')$ by left translation. In particular, 
\[ \wh{A}(F)=1.\]
Furthermore, the normal bundle is a trivial bundle with fiber $\g/\t$, with 
$t$ acting diagonally by the adjoint action. By the definition \eqref{eq:dr} of 
$D_\R$, this gives 
\[ D_\R(\nu_F,\,t)=(-1)^{\hh \dim G/T}{\det}_{\g/\t}(1-\Ad_t)=(-1)^{\hh \dim G/T}|J(t)|^2,\]
with the Weyl denominator $J(t)=\sum_{w\in W}(-1)^{l(w)}t^{w\rho}$. 
Hence, \eqref{eq:ffixed} simplifies to 
\begin{equation}\label{eq:scalar}
(-1)^{\hh \dim G/T} \ 
\f{\chi_F(t)^{1/2}}{|J(t)|^2} \int_F \exp\big(\hh c_1(\ca{L}_F)\big).
\end{equation}
For the remaining integral we find: 
\begin{lemma} We have
\[ \int_F \exp\big(\hh c_1(\ca{L}_F)\big)=\f{\# T_{k+\cox}}{\# Z^2}.\] 
\end{lemma}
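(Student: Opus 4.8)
The plan is to use that $F$ is diffeomorphic to the torus $T'\times T'$, so that $\int_F\exp(\hh c_1(\ca{L}_F))$ is nothing but the Liouville volume of $F$ for the $2$-form $\hh c_1(\ca{L}_F)$, i.e. a covolume of the lattice $\Lambda_Z\oplus\Lambda_Z$. Concretely, since $F=N(T')^{(c_1)}\times N(T')^{(c_2)}$ is a left translate of $D(T')$, I would fix representatives $n_i\in N(T')^{(c_i)}$, so that $\Ad_{n_i}|_\t=w_{c_i}$, and identify $F\cong T'\times T'$ via $(t_1,t_2)\mapsto(n_1t_1,n_2t_2)$. The integral depends only on the de Rham class of $\hh c_1(\ca{L}_F)$ on this torus, so everything reduces to (a) finding that class and (b) integrating its exponential.

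For (a) I would first pin down the level shift $k\mapsto k+\cox$. By \ref{subsec:quant}, $c_1(\ca{L}_F)$ is the image of $2x+y$ under the map $[(\sigma,\eta_r)]\mapsto[\iota_F^*\sigma]$ of \ref{subsec:preq}\eqref{it:2i}. The key observation is that $H^3(\Phi,\R)\cong\R$ is one-dimensional, detected by the level: since $H^1(G',\R)=H^2(G',\R)=0$ one has $H^2(D(G'),\R)=0$, so the long exact sequence of the relative complex embeds $H^3(\Phi,\R)\hra H^3(G,\R)=\R$. Hence every real class is proportional to $[x]$ through its level, so $2[x]+[y]=\tf{2(k+\cox)}{k}[x]$ and, applying the linear map to $H^2(F,\R)$,
\[ \hh c_1(\ca{L}_F)=\tf{k+\cox}{k}\,[\omega_F],\qquad \omega_F:=\iota_F^*\omega .\]
Consequently $\int_F\exp(\hh c_1(\ca{L}_F))=\big(\tf{k+\cox}{k}\big)^l\Vol(F,\omega_F)$, and it remains to compute the symplectic volume of $F$.

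For (b) I would restrict the double $2$-form \eqref{eq:2formdouble} to $F$. Writing $a=n_1\exp\xi_1$, $b=n_2\exp\xi_2$ with $\xi_i\in\t$ and $\theta_i=\d\xi_i$, a direct Maurer--Cartan computation gives $a^*\theta^L=\theta_1$, $a^*\theta^R=w_{c_1}\theta_1$ (and likewise for $b$), while the mixed terms yield analogous expressions; collecting everything one finds $\omega_F=\hh k\,\langle\,\cdot\,,\wh\Om\,\cdot\,\rangle$ for the skew operator $\wh\Om=\left(\begin{smallmatrix}w_{c_2}^{-1}-w_{c_2} & S\\ -S^{T} & w_{c_1}-w_{c_1}^{-1}\end{smallmatrix}\right)$ on $\t\oplus\t$, with $S=1+w_{c_2}+w_{c_1}^{-1}-w_{c_2}w_{c_1}^{-1}$. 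The Liouville volume is then $\Vol(F,\omega_F)=(\hh k)^l\Pf([\langle f_i,\wh\Om f_j\rangle])=\pm\,k^l\,\on{covol}(\Lambda_Z)^2\sqrt{\det\wh\Om}/2^l$ for a $\Z$-basis $\{f_i\}$ of $\Lambda_Z\oplus\Lambda_Z$, the sign being fixed to $+$ by the symplectic orientation. The crucial input — and the step I expect to be the main obstacle — is the determinant identity $\det\wh\Om=4^l$. This is where the hypotheses genuinely enter: by \eqref{centerweil} the assignment $c\mapsto w_c$ is a homomorphism into $W$ whose image, being isomorphic to the abelian group $Z(G)$, is abelian, so $w_{c_1}$ and $w_{c_2}$ \emph{commute}. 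I would therefore block-diagonalise them simultaneously into joint rotation planes $\cong\C$ (on which $w_{c_1}=\mu$, $w_{c_2}=\nu$ with $|\mu|=|\nu|=1$) and $\pm1$-eigenlines, and verify on each block that $\det_\C(\wh\Om|_{\on{block}})=(\bar\nu-\nu)(\mu-\bar\mu)+|S|^2=4$, whence $\det_\R=4^{\dim}$; summing over blocks gives $\det\wh\Om=4^l$. Commutativity is essential here: for non-commuting orthogonal pairs the determinant is strictly smaller, so the use of \eqref{centerweil} cannot be avoided. This yields $\Vol(F,\omega_F)=k^l\on{covol}(\Lambda_Z)^2$.

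Finally I would assemble the pieces arithmetically. Combining (a) and (b), $\int_F\exp(\hh c_1(\ca{L}_F))=(k+\cox)^l\on{covol}(\Lambda_Z)^2$. Since $Q^\vee\subset\Lambda_Z$ with $[\Lambda_Z:Q^\vee]=\#Z$, we have $\on{covol}(\Lambda_Z)=\on{covol}(Q^\vee)/\#Z$; and because the basic inner product identifies $\t\cong\t^*$ isometrically and $P=(Q^\vee)^*$, a short covolume count gives $\#T_{k+\cox}=(k+\cox)^l\on{covol}(Q^\vee)^2$. Substituting, $\int_F\exp(\hh c_1(\ca{L}_F))=(k+\cox)^l\on{covol}(Q^\vee)^2/\#Z^2=\#T_{k+\cox}/\#Z^2$, as claimed.
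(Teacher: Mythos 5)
Your proposal is correct, and its first half coincides with the paper's own argument: the paper likewise uses the vanishing of $H^2$ of the double to see that $H^3(\Phi,\R)$ injects into $H^3(G,\R)$, so that a real relative class is determined by its level and the de Rham class of $\hh c_1(\ca{L}_F)$ is forced to equal $\f{k+\cox}{k}[\omega_F]$, reducing the lemma to a symplectic volume. From there the routes diverge. The paper avoids computing the volume of $F$ from scratch: it lifts to the $\#Z^2$-fold cover $\wt F=N(T)^{(c_1)}\times N(T)^{(c_2)}\subset D(G)$, writes the same block matrix $\left(\begin{smallmatrix}a&b\\ c&d\end{smallmatrix}\right)$ that you found (your $\wh\Om$ is twice it, and your $-S^T$ agrees with the paper's $c$ since $w^T=w^{-1}$), observes that $a,b,c,d$ commute with $ad-bc=1$ so the full determinant is $\det(ad-bc)=1$, concludes that the volume form of $\wt F$ is the left translate of that of $D(T)$, and then quotes the Bismut--Labourie formula $\on{Vol}(D(T))=\# T_{k+\cox}$ before dividing by the covering degree $\#Z^2$. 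You instead compute the Liouville volume of $F$ directly as a Pfaffian over the lattice $\Lambda_Z\oplus\Lambda_Z$: your identity $\det\wh\Om=4^l$ is exactly the paper's $ad-bc=1$ in disguise, but where the paper dismisses it as ``elementary calculation'' you actually verify it, by simultaneous block-diagonalization of the commuting orthogonal transformations $w_{c_1},w_{c_2}$ — and your block identity $(\bar\nu-\nu)(\mu-\bar\mu)+|S|^2=4$ checks out, including the real $\pm1$-eigenline cases where $S=\pm2$. You then re-derive $\# T_{k+\cox}=(k+\cox)^l\on{covol}(Q^\vee)^2$ by a dual-lattice covolume count, with the factor $\#Z^{-2}$ emerging from $[\Lambda_Z:Q^\vee]=\#Z$ rather than from the covering degree; this is a correct, self-contained replacement for the citation of \cite[Proposition 1.2]{bi:sy}, in effect re-proving that volume formula, at the cost of extra length compared with the paper's slicker comparison-with-$D(T)$ trick. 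Two minor blemishes, neither affecting validity: your aside that non-commuting orthogonal pairs give a ``strictly smaller'' determinant is unjustified and unnecessary (commutativity is automatic since $c\mapsto w_c$ is a homomorphism on the abelian group $Z(G)$, which is all either argument needs), and the ``collecting everything'' step for the mixed Maurer--Cartan terms is left implicit, though the matrix you land on is the correct one.
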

\begin{proof} (Cf.~\cite[Proposition 2.2]{al:ve}.) 
The class $c_1(\L_F)$ is obtained from $2x+y$ by the map $H^3(\Phi,\Z)_{\on{ev}}\to H^2(F,\Z)$ discussed in \ref{subsec:preq}\eqref{it:2h}. Passing to real coefficients, 
the map $H^3(\Phi,\R)\to H^3(G,\R)$ is an isomorphism since $H^2(D(G),\R)=0$. 
Hence, there is a \emph{unique} class in $H^3(\Phi,\R)$ at any given level. It follows that 
the de Rham class of $c_1(\L_F)$ coincides with the class of the symplectic form on $F$, using the bilinear form at level $2(k+\cox)$. Letting $\omega_{F,1}$ be the symplectic 
structure for the 2-form defined by $B_{\on{basic}}$, it follows that 
\[ \int_F \exp\big(\hh c_1(\ca{L}_F)\big)=\int_F \exp((k+\cox)\omega_{F,1}).\]
Letting $\wt{F}=N(T)^{(c_1)}\times N(T)^{(c_2)}\subset D(G)$ be the pre-image of $F$, with $\omega_{\wt{F},1}$ the pull-back symplectic form, we obtain 
\[ \int_F \exp\big(\hh c_1(\ca{L}_F)\big)=\f{1}{\# Z^2} 
\int_{\wt{F}} \exp((k+\cox)\omega_{\wt{F},1}\big).\]
We thus need to compute the symplectic volume of $\wt{F}$, for the 2-form defined by 
$(k+\cox)B_{\on{basic}}$. We claim that this is the same as the symplectic 
volume of $D(T)\subset D(G)$. The latter equals $\# T_{k+\cox}$ (see e.g., \cite[Proposition 1.2]{bi:sy}), which hence completes the proof of the lemma. 

To proceed, we need the explicit expression of the 2-form at points $(a,b)$ of $\wt{F}$. 
Use left trivialization of the tangent bundles to identify  $T_{(a,b)}(N(T)^{(c_1)}\times N(T)^{(c_2)})\cong \t\times\t$. The transformations $\Ad_a,\Ad_b$ act on the two $\t$-factors as \emph{commuting} transformations $w_1=w_{c_1},\ w_2=w_{c_2}$, respectively. For any given invariant bilinear form $B$, 
the 2-form \eqref{eq:2formdouble} on $D(G)$ restricts to the skew-symmetric bilinear form on $\t\times \t$, given by 
\[
 B\left(\left(\begin{array}{c} \xi_1\\ \xi_2\end{array}\right),\  \left(\begin{array}{cc} a& b\\ c& d \end{array}\right) 
\left(\begin{array}{c} \xi_1'\\ \xi_2'\end{array}\right)\right),\] 
for $(\xi_1,\xi_2),\ (\xi_1',\xi_2')\in \t\times \t$. 
with 
\[ a=-\hh(w_2-w_2^{-1}),\ \ \ \ b=\hh(1+w_2+
w_1^{-1}-w_2w_1^{-1}),\]
\[ c=-\hh(1+w_1+
w_2^{-1}-w_1w_2^{-1}),\ \ \ \ \ d= \hh(w_1-w_1^{-1}). \]
Since the endomorphisms $a,b,c,d$ commute, and since $ad-bc=I$ by elementary calculation, we have 
\[ \det\left(\begin{array}{cc}a& b\\ c& d\end{array}\right)=
\det(ad-bc)=1.\]

It follows that the volume form defined by $\omega_{\wt{F}}$ coincides with the left translate of the volume form on $D(T)$.  
\end{proof} 
It remains to determine the scalar factor $\chi_F(t)^{1/2}$. 
This factor depends on the choice of prequantization. We have a distinguished prequantization, defined in terms of the maximal torus in apposition $U$ as explained at the end of Section \ref{subsec:label}; the other prequantizations are related by a homomorphism $\phi=(\phi_1,\phi_2)\colon Z\times Z\to \U(1)$. Recall also the phase factor $\delta(c_1,c_2)\in \U(1)$ associated with the fixed point component $F$. 
\begin{lemma}
We have 
\[ \chi_F(t)^{1/2}=(-1)^{\hh \dim(G/T)}\ \phi_1(c_2)\phi_2(c_2)\delta(c_1,c_2).\]
\end{lemma}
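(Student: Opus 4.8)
The plan is to evaluate the phase formula \eqref{eq:complex} at a well-chosen base point, splitting $\chi_F(t)^{1/2}$ into its weight part $\mu_F(t)$ and its determinant part $\det_\C(A_F(t)^{1/2})$. First I would reduce to the distinguished prequantization of Section \ref{subsec:label}: by Proposition \ref{prop:label}\eqref{it:3b}, passing to any other prequantization multiplies the $w_*$-action on the fibre, hence the weight entering \eqref{eq:complex}, by $\phi_1(c_1)\phi_2(c_2)$. It therefore suffices to prove $\chi_F(t)^{1/2}=(-1)^{\hh\dim(G/T)}\,\delta(c_1,c_2)$ for the distinguished prequantization. By Proposition \ref{prop:kostant3} the component $F$ meets $D(U')$, so I choose a base point $m=(u_1,u_2)\in D(U')$ with lifts $u_i\in U\cap N(T)^{(c_i)}$; since $U$ is abelian, $\Phi(m)=e$, so $m\in M_0$ and both $L_0$ and $\mu_F(t)$ are defined.

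For the weight I would compare $t=t_\lambda$ with the principal element $t_*$. Since $u_i\in N(T)^{(c_i)}$ we have $\Ad_{t_*}u_i=c_iu_i$, while the fixed-point condition $c_i\bullet_k\lambda=\lambda$ gives $\Ad_t u_i=c_iu_i$; hence $s:=t_*^{-1}t=\exp(\zeta_\lambda-\zeta_*)$ satisfies $\Ad_s u_i=u_i$, so $s$ fixes the \emph{lift} $(u_1,u_2)\in D(G)$ and not merely its image in $D(G')$. As $t,t_*,s$ commute and all fix $m$, the fibre scalars multiply: $\mu_F(t)=\mu_F(t_*)\,\mu_F(s)$. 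Because $s$ fixes $(u_1,u_2)$ honestly and $\Phi_0(m)=0$, Kostant's prequantization formula gives $\mu_F(s)=1$. On the other hand $t_*$-conjugation induces the Coxeter element $w_*\in N(U)/U$ on $D(U')$, so by the defining normalization of the distinguished prequantization (end of Section \ref{subsec:label}) $\mu_F(t_*)=\kappa(u_1',u_2')$, which equals $\delta(c_1,c_2)$ upon comparing the definition \eqref{eq:kappa} of $\kappa$ with that of $\delta$. Thus $\mu_F(t)=\delta(c_1,c_2)$.

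It remains to compute $\det_\C(A_F(t)^{1/2})$. The tangent space to $F$ is $\t\oplus\t$, on which $t$ acts trivially and which contributes $1$. The normal space is $(\g/\t)\oplus(\g/\t)$ with $t$ acting by $\Ad_t\oplus\Ad_t$; the key point is that the mixing terms in the double's $2$-form \eqref{eq:2formdouble} force the $t$-invariant compatible complex structure to interchange the two $G$-factors, identifying the normal space with $(\g/\t)_\C=\bigoplus_\alpha\g_\alpha$ and $A_F(t)$ with $\Ad_t$, acting by $t^\alpha$ on $\g_\alpha$. For $\alpha>0$ one has $0<\langle\alpha,\zeta_\lambda\rangle<1$, so the prescribed square root is $e^{i\pi\langle\alpha,\zeta_\lambda\rangle}$, whereas for $-\alpha$ the same rule forces $-e^{-i\pi\langle\alpha,\zeta_\lambda\rangle}$; each pair $\{\alpha,-\alpha\}$ thus contributes $-1$, giving $\det_\C(A_F(t)^{1/2})=(-1)^{\#\{\alpha>0\}}=(-1)^{\hh\dim(G/T)}$. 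Multiplying the two factors proves the distinguished case, and restoring $\phi_1(c_1)\phi_2(c_2)$ from the first step yields the general statement.

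The main obstacle is the determinant step: one must pin down the correct $t$-invariant compatible complex structure coming from the twisted $2$-form \eqref{eq:2formdouble}---it is the interchange of the two factors, not a ``positive-roots-holomorphic'' structure on each factor separately, that produces the sign---and then track the half-integer eigenvalues through the square-root convention of \eqref{eq:complex}. By contrast the weight computation rests on the clean observation that $t_*^{-1}t$ fixes the lift $(u_1,u_2)$ exactly, which is what transports the apposition-torus normalization of the distinguished prequantization from $t_*$ to the arbitrary regular element $t_\lambda$.
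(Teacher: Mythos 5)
Your overall architecture matches the paper's: evaluate \eqref{eq:complex} at a base point $m\in F\cap D(U')$, pin down the weight via the apposition-torus normalization of the distinguished prequantization, and compute the determinant sign separately. In the weight step there is a small but real gap: Kostant's formula is infinitesimal, so it only controls the action of the \emph{identity component} of the stabilizer of $m$; knowing that the single element $s=t_*^{-1}t$ fixes the lift $(u_1,u_2)$ does not yet give $\mu_F(s)=1$ (after all, $t_*$ also fixes $m$ honestly in $D(G')$ and acts by $\kappa\neq 1$ in general). The fix is short and is exactly the paper's argument in a different guise: since the affine action of $c_i$ fixes both $\zeta_*$ and $\zeta_\lambda$, subtracting the two identities gives $w_{c_i}(\zeta_\lambda-\zeta_*)=\zeta_\lambda-\zeta_*$, so the entire one-parameter subgroup $\exp\big(r(\zeta_\lambda-\zeta_*)\big)$ fixes $(u_1,u_2)$, placing $s$ in the identity component of the stabilizer; the paper phrases this as $t$ and $t_*$ lying in the same component of the subgroup $T_F$ acting trivially on $F$, using the affine subspace of common fixed points of $c_1,c_2$, which stays in $\on{int}(\Alc)$ by convexity.

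The determinant step, however, has a genuine gap, and it is the one you yourself flagged. Your claim that the invariant compatible complex structure is the plain interchange of the two factors is correct at points where $\Ad_{u_1'}=\Ad_{u_2'}=1$, but false at a base point of a component with $c_i\neq e$: there the form \eqref{eq:2formdouble} is the twisted block matrix with $a=-\hh(k_2-k_2^{-1})$, $b=\hh(1+k_2+k_1^{-1}-k_2k_1^{-1})$, etc., where $k_i=\Ad_{u_i'}$, and the standard interchange is not compatible with it (the block $b$ is not even symmetric in general). The identification of $A_F(t)$ with $\Ad_t$ on $\bigoplus_\alpha \g_\alpha$, with exactly one eigenvalue $t^\alpha$ per root, is precisely the branch selection governed by the complex structure; a different selection (say $t^\alpha$ with multiplicity two in place of the pair $t^{\pm\alpha}$) changes the product of the prescribed square roots, so your sign $(-1)^{\hh\dim(G/T)}$ is not automatic. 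This is where the bulk of the paper's proof lives: it splits $\g=V'\oplus V''$ according to whether both $k_i$ act as $-1$ (on $V'\oplus V'$ the form reduces to \emph{minus} the standard structure, handled by a separate model computation), and on $V''\oplus V''$ it deforms the twisted matrix linearly to the standard one, proving nondegeneracy along the whole path via the spectral estimate that $1-b+c$ has no eigenvalue $\ge 3$ on $V''$; homotopy invariance of the (integer-valued) eigenvalue selection then transports your computation at the standard structure to the actual one. Without some such deformation or invariance argument, the key step of the lemma is asserted rather than proved.
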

\begin{proof}By Proposition \ref{prop:kostant3}, $F\cap D(U')$ is non-empty.  Letting $m\in F\cap D(U')$, since $\Phi(m)=e$ we may compute the phase factor 
as a product $\chi_F(t)^{1/2}=\mu_F(t)\,{\det}_\C(A_F(t)^{1/2})$, see \eqref{eq:complex}. Here $\mu_F(t)$ is the phase factor for the action of $t$ on the prequantum line bundle at $m$. To compute it, let $T_F\subset F$ be the subgroup of elements acting trivially on $F$. We claim that  the elements $t=t_\lambda$ and $t_*$ are in the same component of $T_F$.  Indeed, since the action of $Z(G)$ on $\t$ is an affine action, the set of elements $\zeta\in\t$ fixed by both $c_1$ and $c_2$ is an affine subspace, containing   $\zeta_*$ and $\zeta_\lambda$. By Proposition \ref{prop:kostant3}, for any $\zeta$ in this affine subspace, the element $\exp(\zeta)$ fixes $F$ pointwise, proving the claim.
 
As a consequence, we obtain
\[ \mu_F(t)=\mu_F(t_*)=\phi_1(c_2)\phi_2(c_2)\delta(c_1,c_2),\]
using Proposition \ref{prop:label} with $U'$ playing the role of $T'$. To compute 
${\det}_\C(A_F(t)^{1/2})$, we make some preliminary observations. Suppose $V$ is a Euclidean vector space, and let $W=V\oplus V$ with the standard symplectic structure 
(and compatible complex structure) given by the matrix 
\[ \left(\begin{array}{cc}0 & 1\\ -1& 0\end{array}\right).\] 
For $h\in\on{O}(V)$, the transformation $g=h\oplus h$ of $W$ is symplectic, and its eigenvalues other than $1$ come in complex conjugate pairs $e^{i\theta},\ e^{-i\theta}$, with $0<\theta<\pi$. With our choice of square root, the eigenvalues of $g^{1/2}$ are then $e^{i\theta/2}$ and $e^{i(\pi/2-\theta/2)}$. 
Hence, ${\det}_\C(g^{1/2})=(-1)^n$, where $n$ is the codimension in $V$ of the subspace where $V$ acts trivially. 
The same result is obtained when we use \emph{minus} the standard symplectic structure on $V\oplus V$. 

By \eqref{eq:2formdouble}, the symplectic form on $T_m D(G')\cong \g\oplus \g$ (using left  trivialization of the tangent bundle) is given in matrix form by 
\[A=\left(\begin{array}{cc} a& b\\ c& d \end{array}\right) ,\]
with 
 \[ a=-\hh(k_2-k_2^{-1}),\ \ \ \ b=\hh(1+k_2+
k_1^{-1}-k_2k_1^{-1}),\]
\[ c=-\hh(1+k_1+
k_2^{-1}-k_1k_2^{-1}),\ \ \ \ \ d= \hh(k_1-k_1^{-1}).\]
Here $k_1=\Ad_{u_1'},\ k_2=\Ad_{u_2'}\in \on{O}(\g)$ for the given point $m=(u_1',u_2')$. Note that $k_1,k_2$ commute with each other (since the $u_i'\in U'$ commute), and they also commute with the transformation 
$h=\Ad_{t_*}$ (since the $u_i'$ are fixed points of $\Ad_{t_*}$). 
Let $V'\subset V=\g$ be the subspace on which both $k_1,k_2$ act as minus the identity, and  $V''$ its orthogonal complement. Then both $V'\oplus V'$ and 
$V''\oplus V''$ are invariant under the diagonal action of $\Ad_{t_*}$. On $V'\oplus V'$, the matrix $A$ simplifies to minus the standard symplectic structure, hence by the discussion above, it contributes $(-1)^{\dim V'}$ to ${\det}_\C(A_F(t)^{1/2})$. 

On $V''\oplus V''$, we consider the homotopy 
\[ A_s=s A+(1-s) \left(\begin{array}{cc}0 & 1\\ -1& 0\end{array}\right),\ \ 0\le s\le 1,\]
with the standard symplectic structure. We claim that $A_s$ is symplectic 
on $V''\oplus V''$ for all $s\in [0,1]$. Indeed, using $ad-bc=1$ one finds
\[{ \det}_{V''\oplus V''}(A_s)={\det}_{V''}(1-s(1-s)(2-b+c)).\]
We want to show that the symmetric operator 
$s(1-s)(2-b+c)$ has no eigenvalues $\ge 1$ on $V''$. Since 
$s(1-s)\in [0,\f{1}{4}]$ for $s\in [0,1]$, this is equivalent to showing that 
$1-b+c$ has no eigenvalues $\ge 3$ on $V''$. But 
\[ 1-b+c=-\f{w_1+w_1^{-1}}{2}-\f{w_2+w_2^{-1}}{2}+\f{w_2w_1^{-1}+w_1w_2^{-1}}{2}\]
has norm $\le 3$, since each summand has norm $\le 1$. It can take on the value $3$ only on those generalized eigenspaces of $w_1,w_2$ where all three summands are equal to $1$. But this does not happen on $V''$, by definition. We conclude 
that $V''\oplus V''$ contributes $(-1)^{\dim V''}$ to ${\det}_\C(A_F(t)^{1/2})$. 
We see that 
 ${\det}_\C(A_F(t_*)^{1/2})$ is 
 $(-1)^n$, where $n$ is the codimension of the $\Ad_{t_*}$-fixed space in $\g$. Since $t_*$ is regular, this is just $\t$, and we obtain $(-1)^{\dim G/T}$ as desired. 
\end{proof}

Putting everything together, and summing over fixed point components, we have shown that the fixed point contribution \eqref{eq:ffixed} from $F$ is given by 
\[\f{1}{\# Z^2}\ \f{\# T_{k+\cox}}{|J(t_\lambda)|^2} \ \phi_1(c_1)\phi_2(c_2)\delta(c_1,c_2).\]
We can use the $S$-matrix to write $\f{\# T_{k+\cox}}{|J(t_\lambda)|^2}=
S_{\lambda,0}^{-2}$. Summing over all fixed point contributions, we then obtain 
\[ \ca{Q}(D(G'))(t_\lambda)=\f{1}{\# Z^2} \,S_{\lambda,0}^{-2}
\sum_{\stackrel{c_1,c_2\in Z}{c_i\bullet_k\lambda=\lambda}}\phi_1(c_1)\phi_2(c_2)\delta(c_1,c_2).\]
Therefore, 
\[ \ca{Q}(D(G'))=\f{1}{\# Z^2}\sum_{\lambda\in P_k}
\Big(\sum_{\stackrel{c_1,c_2\in Z}{c_i\bullet_k\lambda=\lambda}}\phi_1(c_1)\phi_2(c_2)\delta(c_1,c_2)\Big)S_{\lambda,0}^{-2}
\ \wt{\tau}_\lambda\]
as an element of $R_k(G)$. Note that the condition $c_i\bullet_k\lambda=\lambda$ in this formula comes from the structure of the fixed point set, Proposition \ref{prop:structurefixedpointset}.

\section{Moduli spaces for nonsimply connected groups}
\subsection{Fuchs-Schweigert formulas for  $M_{G'}(\Sigma_\gg^1,\mu)$}
Let $\Sigma_\gg^1$ be the compact, oriented surface of genus $\gg$ with one boundary component, and $M_{G'}(\Sigma_\gg^1)=\Hom(\pi_1(\Sigma_\gg^1),G')$ viewed as a quasi-Hamiltonian $G$-space. We have 
\[  M_{G'}(\Sigma_\gg^1)=D(G')\times \cdots \times D(G')\]
(the $\gg$-fold fusion product). Since the quantization of a fusion product  is just the product of the quantizations, we arrive at the following formula for a surface of genus $\gg$ with one boundary component
 \[ \ca{Q}(M_{G'}(\Sigma_\gg^1))=\f{1}{\# Z^{2\gg}} \sum_{\lambda\in P_k}
\sum_{\stackrel{c_1,\ldots,c_{2\gg}\in Z}{c_i\bullet_k\lambda=\lambda}}
\prod_{i=1}^{2\gg} \phi_i(c_i)\prod_{j=1}^g \delta(c_{2j-1},c_{2j}) S_{\lambda,0}^{-2\gg}\ \wt{\tau}_\lambda\ . \]
Interchanging the two summations, we obtain
 \[ \ca{Q}(M_{G'}(\Sigma_\gg^1))=\f{1}{\# Z^{2\gg}}
\sum_{c_1,\ldots,c_{2\gg}}
\prod_{i=1}^{2\gg} \phi_i(c_i)
\prod_{j=1}^g \delta(c_{2j-1},c_{2j})
 \sum_{\stackrel{\lambda\in P_k}{c_i\bullet_k\lambda=\lambda}}
S_{\lambda,0}^{-2\gg}\  \wt{\tau}_\lambda\ . \]
(To be precise, the first sum is only over those $c_i$ whose action on $P_k$ has 
at least one common fixed point.) The reduced space at a value $\exp(\mu/k)$ is the 
moduli space $\ca{Q}(M_{G'}(\Sigma_\gg^1,\mu))$ of flat $G$-bundles with boundary holonomy  in the conjugacy class $\Co=G.\exp(\mu/k)$. It inherits a prequantization 
whenever $\mu\in P_k$. According to the `quantization commutes with reduction theorem' for group-valued moment maps (see \cite{al:fi} and \cite{me:twi}), the corresponding quantization is the multiplicity of $\tau_\mu$ in 
$\ca{Q}(M_{G'}(\Sigma_\gg^1))$. We hence arrive at the following version of the Fuchs-Schweigert formulas. 
\begin{theorem}
Let $G$ be a compact, simple, simply connected Lie group, $Z\subset Z(G)$ a finite subgroup of the center, and $G'=G/Z$. Let $k\in\N$ be such that $B=k B_{\on{basic}}$ takes on integer values on the lattice $\Lambda_Z$. Let $\Sigma_\gg^1$ be a compact oriented surface of genus $\gg$ with one boundary component, and consider the prequantization indexed by $\phi\in \Hom(Z^{2\gg},\U(1))$. Then 
\[  \ca{Q}(M_{G'}(\Sigma_\gg^1,\mu))=\f{1}{\# Z^{2\gg}}
\sum_{c_1,\ldots,c_{2\gg}}
\prod_{i=1}^{2\gg} \phi_i(c_i)
\prod_{j=1}^g \delta(c_{2j-1},c_{2j}) \sum_{\stackrel{\lambda\in P_k}{c_i\bullet_k\lambda=\lambda}}
S_{\lambda,0}^{1-2\gg}\ S_{\lambda,\ast \mu} \]
for all $\mu \in P_k$.
\end{theorem}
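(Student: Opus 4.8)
The plan is to observe that the substantive computation is already complete: the element $\ca{Q}(M_{G'}(\Sigma_\gg^1))\in R_k(G')$ has been obtained in the diagonal basis $\{\wt\tau_\lambda\}$ at the end of Section~\ref{subsec:fixed}, and the theorem is merely the translation of that formula into a statement about the reduced spaces. The only genuinely new ingredient is the quantization-commutes-with-reduction principle, and after invoking it the remaining task is a change of basis.

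First I would set up the reduction. By the shifting trick, $M_{G'}(\Sigma_\gg^1,\mu)$ is the symplectic quotient of $M_{G'}(\Sigma_\gg^1)$ at the conjugacy class $\Co=G.\exp(\mu/k)$, equivalently $\big(M_{G'}(\Sigma_\gg^1)\times\Co^*\big)\qu G$. Since $\mu\in P_k$, the class $\Co$ carries a unique level $k$ prequantization by \ref{subsec:preq}\eqref{it:2d}, and by \ref{subsec:preq}\eqref{it:2e} the fusion product inherits one, so the reduced space is prequantized. The quantization-commutes-with-reduction theorem for group-valued moment maps (\cite{al:fi}, \cite{me:twi}) then identifies $\ca{Q}(M_{G'}(\Sigma_\gg^1,\mu))\in\Z$ with the multiplicity of $\tau_\mu$ in $\ca{Q}(M_{G'}(\Sigma_\gg^1))$.

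It then remains to extract that multiplicity. Starting from the expression already derived,
\[
\ca{Q}(M_{G'}(\Sigma_\gg^1))=\f{1}{\# Z^{2\gg}}\sum_{\lambda\in P_k}S_{\lambda,0}^{-2\gg}\Big(\sum_{\stackrel{c_1,\ldots,c_{2\gg}\in Z}{c_i\bullet_k\lambda=\lambda}}\prod_{i=1}^{2\gg}\phi_i(c_i)\prod_{j=1}^\gg\delta(c_{2j-1},c_{2j})\Big)\wt\tau_\lambda,
\]
I would substitute the change-of-basis relation $\wt\tau_\lambda=\sum_\nu S_{\lambda,0}S_{\ast\lambda,\nu}\tau_\nu$ from \eqref{eq:inversion} and read off the coefficient of $\tau_\mu$; the factors $S_{\lambda,0}^{-2\gg}$ and $S_{\lambda,0}$ combine to $S_{\lambda,0}^{1-2\gg}$, giving $\frac{1}{\# Z^{2\gg}}\sum_\lambda S_{\lambda,0}^{1-2\gg}S_{\ast\lambda,\mu}A_\lambda$ with $A_\lambda$ the inner sum over the $c_i$. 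Two routine steps finish the job: the standard $S$-matrix identity $S_{\ast\lambda,\mu}=S_{\lambda,\ast\mu}$ (a consequence of the symmetry and unitarity of $S$, equivalently $\ol{S}=S^{-1}=S^3$) puts the entry in the form claimed, and interchanging the $\lambda$- and $c_i$-summations — exactly as in the unreduced formula — yields the stated expression. One bookkeeping point worth recording is that, for a given choice of $c_i$, the inner sum over $\lambda$ is empty unless the $c_i$ have a common fixed point in $P_k$, which is precisely the hypothesis under which the phase factors $\delta(c_{2j-1},c_{2j})$ are defined by Lemma~\ref{lem:also}; hence no ill-defined term ever contributes, and one may equally read the evaluation off directly from $\ca{Q}(t_\lambda)=a_\lambda$ together with $\tau_\mu(t_\lambda)=S_{\mu,\ast\lambda}/S_{0,\ast\lambda}$ in \eqref{eq:sm2}.

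The main obstacle here is conceptual rather than computational: it lies entirely in the correct invocation of quantization commutes with reduction — verifying prequantizability of the reduced space for $\mu\in P_k$ and confirming that the relevant invariant is the multiplicity of $\tau_\mu$. Once that principle is in place, the passage from the $\wt\tau$-basis to the $\tau$-basis is a one-line linear-algebra calculation, so the theorem follows immediately from the fixed-point computation of the preceding section.
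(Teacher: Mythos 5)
Your proposal is correct and follows essentially the same route as the paper: the paper likewise invokes the shifting trick and the quantization-commutes-with-reduction theorem of \cite{al:fi,me:twi} to identify $\ca{Q}(M_{G'}(\Sigma_\gg^1,\mu))$ with the multiplicity of $\tau_\mu$ in $\ca{Q}(M_{G'}(\Sigma_\gg^1))$, then reads off that multiplicity via the change of basis \eqref{eq:inversion}, with the same caveat that only tuples $(c_1,\ldots,c_{2\gg})$ admitting a common fixed point in $P_k$ contribute, so that $\delta$ is well defined by Lemma \ref{lem:also}. The only slight misattribution is that the genus-$\gg$ expression you take as your starting point is not at the end of Section \ref{subsec:fixed} (which treats only $D(G')$, i.e.\ genus one) but is obtained at the start of Section 5 from it via multiplicativity of quantization under fusion products together with \eqref{eq:isomfactors} --- a step you use implicitly and which is harmless here.
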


The term where all $c_i=e$ may be regarded as the `leading term'. It is just the corresponding term for the group $G$, divided by $\# Z^{2\gg}$:
\[  \f{1}{\# Z^{2\gg}}\ca{Q}(M_G(\Sigma_\gg^1,\mu)).\]
The terms where at least one $c_i\neq e$ involve a summation over a proper subset of $P_k$, and may be seen as `correction terms'. 

\begin{remark}
In some very special cases, the correction terms are zero. In particular, this then means 
that the integers $\ca{Q}(M_G(\Sigma_\gg^1,\mu))$ must be divisible by $\# Z^{2\gg}$. One such example, discussed in Appendix \ref{app:b}, occurs for $G=A_l$ and $Z=\Z_m$ where $m$ is a prime number dividing $l+1$. Then $k_0=1$, but unless  $k$ is a multiple of $m$ the action of $Z$ on $P_k$ has trivial stabilizers. There are similar examples  $G=D_l$, and suitable $k,\ l$ and $Z\cong \Z_2\subset \Z(G)$, as explained in Appendix \ref{app:b}.
\end{remark}

\subsection{The case $G'=\on{PU}(n)$, with $n$ prime}

An interesting case, considered by Beauville \cite{be:ve} in the algebro-geometric setting, 
is that of $G'=\on{PU}(n)$ with $n$ an odd prime number. Thus 
$G=\SU(n)$ with $Z=Z(G)=\Z_n$. A special feature of this situation is that the action of $Z$ on the alcove 
has  $\zeta_*=\f{\rho}{\cox}$ as its unique fixed point, and is \emph{free} away from this fixed point. We recall that $k_0=n$, and that $\delta=1$ (since $G=A_l$ where $l=n-1$ is even.) Suppose $k$ is a multiple of $n$, so that $M_{G'}(\Sigma_\gg^1)$ is prequantizable at level $k$; the inequivalent prequantizations are indexed by $\phi=(\phi_1,\ldots,\phi_{2\gg})\in \Hom(Z^{2\gg},\U(1))$. In the general formula, 
the `leading term' coming from $c_1=\ldots=c_{2\gg}=e$ is $(\# Z)^{-2\gg}M_G(\Sigma)$. If any $c_i\neq e$, then the unique level $k$ weight fixed
under $c_1,\ldots,c_{2\gg}$ is 
\[\lambda_*=(k+n)\zeta_*=\f{k}{n}\rho\in P_k.\]
We hence obtain 
\[  \ca{Q}(M_{G'}(\Sigma_\gg^1))=\f{1}{n^{2\gg}}\Big(\ca{Q}(M_G(\Sigma))
+(x-1) S_{\lambda_*,0}^{-2\gg}\, \wt{\tau}_{\lambda_*}\Big)\]
where 
\[ x=\sum_{c_1,\ldots,c_{2\gg}}\  \prod_{i=1}^{2\gg} \phi_i(c_i) 
= \prod_{i=1}^{2\gg} \left(\sum_{c\in \Z_n}\phi_i(c)\right)=\begin{cases} n^{2\gg} & \mbox{ if all $\phi_i=1$},\\
0 & \mbox{ if some $\phi_i\neq 1$}.\end{cases}\]
To read off multiplicities, we still need to express $\wt{\tau}_{\lambda_*}$ in terms of the standard basis of the fusion ring. For the following, see  Theorem 2, Lemma 3.4.2 and Lemma 3.5.2 in \cite{ko:ma}. 

\begin{proposition}[Kostant \cite{ko:ma}] Let $G$ be simple, simply connected and simply laced (i.e., $Q=Q^\vee$). Then 
\begin{equation}\label{eq:kostant1}
|J(t_*)|^2=\# Z(G)\ h^l.
\end{equation}
Furthermore, for $\mu\in P$ we have that 
\begin{equation}\label{eq:kostant2}
 \eps(\mu):=\chi_\mu(t_*)\in \{-1,0,1\};\end{equation}
in fact, $\eps(\mu)=(-1)^{l(w)}$  if there exists  $w\in W$ with $w(\mu+\rho)-\rho\in h Q$, and $0$ otherwise. 
\end{proposition}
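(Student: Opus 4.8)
The plan is to evaluate everything at the principal element $t_*=\exp(\zeta_*)$, $\zeta_*=\tfrac1h\rho^\vee$, via the Weyl denominator and character formulas, reducing both assertions to facts about root heights and the eigenvalues of a Coxeter element. The basic input is that for any root $\alpha$ one has $t_*^\alpha=e^{\tpi\l\alpha,\zeta_*\r}=q^{\on{ht}(\alpha)}$, where $q:=e^{\tpi/h}$ is a primitive $h$-th root of unity and $\on{ht}(\alpha)=\l\alpha,\rho^\vee\r$ is the height; equivalently $\Ad_{t_*}$ acts on $\g_\alpha$ by $q^{\on{ht}(\alpha)}$. Since all heights lie in $\{-(h-1),\dots,h-1\}\setminus\{0\}$, the element $t_*$ is regular and $J(t_*)\ne0$.

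For \eqref{eq:kostant1} I would start from the product form of the Weyl denominator. As $t_*$ lies in the compact torus, $\ol{t_*^{\alpha/2}}=t_*^{-\alpha/2}$, so
\[ |J(t_*)|^2=\prod_{\alpha>0}|t_*^{\alpha/2}-t_*^{-\alpha/2}|^2=\prod_{\alpha}(1-q^{\on{ht}(\alpha)}), \]
the last product over all roots. Kostant's principal $\mathfrak{sl}_2$ decomposition \cite{ko:tds} shows the multiset of root heights is $\bigsqcup_{i=1}^l\{\pm1,\dots,\pm m_i\}$ for the exponents $m_1,\dots,m_l$, so regrouping gives $|J(t_*)|^2=\prod_{i=1}^l g(m_i)$ with $g(m)=\prod_{j=1}^m(1-q^j)(1-q^{-j})=\prod_{j=1}^m|1-q^j|^2>0$. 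Using $\prod_{j=1}^{h-1}(1-q^j)=h$ together with the substitution $j\mapsto h-j$ one verifies the pairing identity $g(m)\,g(h-m)=h^2\,|1-q^m|^2$. Because $m_i+m_{l+1-i}=h$, summing over the involution $i\mapsto l+1-i$ yields the clean uniform identity
\[ \Big(\prod_{i=1}^l g(m_i)\Big)^2=\prod_{i=1}^l g(m_i)\,g(h-m_i)=h^{2l}\prod_{i=1}^l|1-q^{m_i}|^2. \]
Finally, the eigenvalues of a Coxeter element on $\t$ are $q^{m_1},\dots,q^{m_l}$, so $\prod_i|1-q^{m_i}|=|{\det}_\t(1-w_*)|$, and by Proposition \ref{prop:dwyer} the map $1-w_*$ carries $P^\vee$ isomorphically onto $Q^\vee$, whence this determinant has absolute value $[P^\vee:Q^\vee]=\#Z(G)$. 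Taking positive square roots gives $|J(t_*)|^2=\#Z(G)\,h^l$.

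For \eqref{eq:kostant2} I would apply the Weyl character formula, $\chi_\mu(t_*)=F(\mu+\rho)/F(\rho)$ with $F(\nu)=\sum_{w\in W}(-1)^{l(w)}t_*^{w\nu}$, the denominator $F(\rho)=J(t_*)$ being nonzero by part \eqref{eq:kostant1}. The function $F$ is $W$-anti-invariant in $\nu$, and it is invariant under translation by $hQ$, since $t_*^{h\beta}=e^{\tpi\l\beta,\rho^\vee\r}=1$ for $\beta\in Q$ (using $\l Q,P^\vee\r\subset\Z$); hence $F$ descends to an anti-invariant function on $P/hQ$. If $\ol{\mu+\rho}$ is not regular for the $W$-action on $P/hQ$ — equivalently $\l\mu+\rho,\alpha^\vee\r\in h\Z$ for some root $\alpha$ — then its stabilizer in the affine reflection group $W\ltimes hQ$ contains a reflection $s$, and anti-invariance together with $hQ$-periodicity forces $F(\mu+\rho)=-F(\mu+\rho)=0$, so $\eps(\mu)=0$. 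If $\ol{\mu+\rho}$ is regular, I would note that the regular $W$-orbits in $P/hQ$ are indexed by the weights in the interior of the level-$h$ alcove; by the fact recalled in Section \ref{sec:prel} (the case $k=0$ of the description of $(k+\cox)\on{int}(\Alc)\cap P$, with $\cox=h$ here), the only such weight is $\rho$, so there is a single regular orbit $W\ol\rho$. Thus a regular $\ol{\mu+\rho}$ is $W$-conjugate to $\ol\rho$, i.e. $w(\mu+\rho)-\rho\in hQ$ for some $w$, and anti-invariance gives $\eps(\mu)=(-1)^{l(w)}$; since $\ol\rho$ is regular, no non-regular class can satisfy this condition, so the two cases match the stated dichotomy exactly.

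The main obstacle is the regularity analysis in the second part: everything hinges on identifying the regular $W$-orbits of $P/hQ$ with lattice points interior to the scaled alcove, and then on the \emph{uniqueness} of $\rho$ there, which is precisely the input that collapses the regular locus to a single orbit and pins down the sign $(-1)^{l(w)}$. In the first part the only non-formal ingredient is Kostant's height--exponent distribution, after which the computation is pure bookkeeping; the pairing identity $g(m)g(h-m)=h^2|1-q^m|^2$ together with $|{\det}(1-w_*)|=\#Z(G)$ does all the work and, pleasantly, needs no separate treatment of the self-paired middle exponent.
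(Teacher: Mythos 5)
Your proof is correct, but it is worth noting that the paper itself offers no argument for this proposition at all: it is quoted from Kostant, with a pointer to Theorem 2, Lemma 3.4.2 and Lemma 3.5.2 of \cite{ko:ma}. What you have done is reconstruct a self-contained proof in essentially Kostant's spirit. For \eqref{eq:kostant1}, your chain --- $t_*^\alpha=q^{\on{ht}(\alpha)}$, the height--exponent duality from the principal $\mathfrak{sl}_2$, the pairing identity $g(m)g(h-m)=h^2|1-q^m|^2$ via $\prod_{j=1}^{h-1}(1-q^j)=h$ and $m_i+m_{l+1-i}=h$, and finally $\prod_i|1-q^{m_i}|=|{\det}_\t(1-w_*)|=[P^\vee:Q^\vee]=\#Z(G)$ by Proposition \ref{prop:dwyer} --- checks out (I verified it numerically on $A_1$, $B_2$ and $G_2$), and the squaring trick does indeed absorb a self-paired middle exponent uniformly. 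Notably, your argument for \eqref{eq:kostant1} never uses the simply-laced hypothesis, so you actually prove that identity for all simple types; simple-lacedness enters only in \eqref{eq:kostant2}, where you need $h=h^\vee$ (so that $\rho$ is the unique weight in $h\on{int}(\Alc)$, the $k=0$ case of the fact recalled in Section \ref{sec:prel}) and, more importantly, need $hQ$ to coincide with the $h$-fold dilation of the coroot lattice under the basic inner product, so that $W\ltimes hQ$ is genuinely the dilated affine Weyl group. That last point is the one place you lean on an unstated standard fact: your step ``non-regular $\Rightarrow$ the stabilizer contains a reflection'' is exactly the theorem that point stabilizers in an affine reflection group are generated by the reflections they contain, and it deserves an explicit citation; it also silently delivers the well-definedness of the sign $(-1)^{l(w)}$ (uniqueness of $w$ follows from triviality of the stabilizer of the regular class $\ol{\rho}$). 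With that fact granted, the dichotomy argument --- vanishing on walls by anti-invariance plus $hQ$-periodicity, a single regular orbit through $\ol{\rho}$, and $F(\mu+\rho)=(-1)^{l(w)}F(\rho)$ --- is complete and matches the stated formula exactly, as a direct check on $A_1$ (where $\eps$ cycles through $1,0,-1,0$) confirms.
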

Equation \eqref{eq:kostant1} allows us to rewrite the square of the $S$-matrix element $S_{0,\lambda_*}$ as 
\[ S_{0,\lambda_*}^2=\f{|J(t_*)|^2}{\# T_{k+h}}=\f{h^l}{(k+h)^l}
=\left(\f{k}{h}+1\right)^{-l}.
\]
Define an element
\begin{equation}\label{eq:tausharp}
\tau_\natural=\sum_{\mu\in P_k} \eps(\mu)\tau_\mu \in R_k(G)
\end{equation}
with $\epsilon(\mu)$ as in \eqref{eq:kostant2}. 

\begin{lemma} Suppose $k\in \N$ is such that  $\lambda_*=k\zeta_*=\f{k}{\cox}\rho\in P_k$. Then 
\[ \wt{\tau}_{\lambda_*}=S_{0,\lambda_*}^2\ \tau_\natural.\]
\end{lemma}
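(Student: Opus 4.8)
The plan is to recognize that the special element $t_{\lambda_*}$ attached to $\lambda_*$ is exactly Kostant's principal element $t_*$. This converts the integers $\eps(\mu)=\chi_\mu(t_*)$ into entries of the $S$-matrix, after which the lemma follows immediately from the change-of-basis formula \eqref{eq:inversion}. First I would verify the identification $t_{\lambda_*}=t_*$. Since $G$ is simply laced, the basic inner product gives $\rho^\vee=\rho$ and the Coxeter and dual Coxeter numbers agree, $h=\cox$, so $\zeta_*=\tf{1}{\cox}\rho$. With $\lambda_*=\tf{k}{\cox}\rho$ by hypothesis,
\[ \zeta_{\lambda_*}=\f{\lambda_*+\rho}{k+\cox}=\f{\big(\tf{k}{\cox}+1\big)\rho}{k+\cox}=\f{\rho}{\cox}=\zeta_*, \]
whence $t_{\lambda_*}=\exp(\zeta_{\lambda_*})=\exp(\zeta_*)=t_*$.

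Next I would record the two content-bearing consequences. Because $-w_0$ is the opposition involution one has $w_0\rho=-\rho$, so $\lambda_*$ is self-dual: $\ast\lambda_*=-w_0\lambda_*=\lambda_*$, where $\ast\mu=-w_0\mu$ is the dual weight. Since $\lambda_*\in P_k$, the evaluation $\on{ev}_{t_{\lambda_*}}$ descends to $R_k(G)$ and $\tau_\mu$ is the image of $\chi_\mu$; combining $t_{\lambda_*}=t_*$ with \eqref{eq:sm2} and self-duality then gives, for every $\mu\in P_k$,
\[ \eps(\mu)=\chi_\mu(t_*)=\tau_\mu(t_{\lambda_*})=\f{S_{\mu,\ast\lambda_*}}{S_{0,\ast\lambda_*}}=\f{S_{\mu,\lambda_*}}{S_{0,\lambda_*}}, \]
where $S_{0,\lambda_*}\neq 0$ by the computation preceding the lemma (indeed $S_{0,\lambda_*}^2=(\tf{k}{\cox}+1)^{-l}$).

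Finally I would substitute these into the inversion formula \eqref{eq:inversion}. Taking $\mu=\lambda_*$ there, and using $\ast\lambda_*=\lambda_*$ together with the symmetry of $S$,
\[ \wt{\tau}_{\lambda_*}=\sum_{\lambda\in P_k}S_{\lambda_*,0}\,S_{\lambda_*,\lambda}\,\tau_\lambda=S_{0,\lambda_*}\sum_{\lambda\in P_k}S_{\lambda,\lambda_*}\,\tau_\lambda. \]
On the other hand, from the displayed identity for $\eps(\mu)$,
\[ S_{0,\lambda_*}^2\,\tau_\natural=S_{0,\lambda_*}^2\sum_{\mu\in P_k}\f{S_{\mu,\lambda_*}}{S_{0,\lambda_*}}\,\tau_\mu=S_{0,\lambda_*}\sum_{\mu\in P_k}S_{\mu,\lambda_*}\,\tau_\mu, \]
which coincides with the previous display, proving $\wt{\tau}_{\lambda_*}=S_{0,\lambda_*}^2\,\tau_\natural$. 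Everything past the first paragraph is bookkeeping with the already-established relations \eqref{eq:inversion} and \eqref{eq:sm2}; the step I expect to require the most care is confirming $t_{\lambda_*}=t_*$ with the correct normalization $\rho^\vee=\rho$, $h=\cox$ forced by the simply laced hypothesis, since every subsequent simplification hinges on that identification and on the resulting self-duality $\ast\lambda_*=\lambda_*$.
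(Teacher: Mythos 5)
Your proof is correct and follows essentially the same route as the paper: identify $t_{\lambda_*}=t_*$, use \eqref{eq:sm2} to write $\eps(\mu)=S_{\mu,\lambda_*}/S_{0,\lambda_*}$, and conclude via the change-of-basis formulas \eqref{eq:inversion}. The only (cosmetic) difference is direction — the paper expands $\tau_\natural$ in the $\wt{\tau}$-basis and collapses the double sum using unitarity of $S$, whereas you expand $\wt{\tau}_{\lambda_*}$ in the $\tau$-basis and match coefficients using only symmetry — and you make explicit two points the paper leaves implicit, namely the verification $\zeta_{\lambda_*}=\zeta_*$ and the self-duality $\ast\lambda_*=\lambda_*$.
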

\begin{proof}
 By definition, $\eps(\mu)=\tau_\mu(t_*)=\tau_\mu(t_{\lambda_*})$. 
 In terms of the $S$-matrix, this is (see \eqref{eq:sm2}) $S_{\mu,\lambda_*}/\ S_{0,\lambda_*}$. Together with the change-of-basis formulas \eqref{eq:inversion}, 
it follows that 
\[ \tau_\natural=\sum_{\mu,\nu} \f{S_{\mu,\lambda_*}}{S_{0,\lambda_*}} \f{S_{*\mu,\nu}}{S_{0,\nu}}\wt{\tau}_{*\nu}=
\sum_{\nu} \f{\delta_{\nu,\lambda_*}}{S_{0,\lambda_*} S_{0,\nu}}\wt{\tau}_{*\nu}
=\f{\wt{\tau}_{\lambda_*}}{S_{0,\lambda_*}^2}\]
where we used that the $S$-matrix is symmetric and unitary. 
\end{proof}
Consequently, we have found that 
\[ S_{\lambda_*,0}^{-2\gg}\, \wt{\tau}_{\lambda_*}=
\left(\f{k}{h}+1\right)^{l(\gg-1)}\tau_\natural.\]
Returning to the case at hand, with $G=\SU(n)$, $h=n$ and $l=n-1$, we obtain:

\begin{proposition} Let $G=\SU(n)$, where $n$ is prime, and $Z=Z(G)$ so that $G'=\on{PU}(n)$, and let $k$ be a multiple of $n$. For the prequantization given by $\phi=(\phi_1,\ldots,\phi_{2\gg})\in \Hom(Z^{2\gg},\U(1))$, we have that 
\[  \ca{Q}(M_{G'}(\Sigma_\gg^1))=\f{1}{n^{2\gg}}\Big(\ca{Q}(M_G(\Sigma_\gg^1))
+(n^{2\gg}-1)  
\left(\f{k}{n}+1\right)^{(n-1)(\gg-1)}
\tau_\natural\Big)\]
if all $\phi_i=1$, while 
\[ \ca{Q}(M_{G'}(\Sigma_\gg^1))=\f{1}{n^{2\gg}}\Big(\ca{Q}(M_G(\Sigma_\gg^1))
- \left(\f{k}{n}+1\right)^{(n-1)(\gg-1)}
\, \tau_\natural\Big)\]
if any $\phi_i\neq 1$. 
\end{proposition}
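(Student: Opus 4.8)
The plan is to specialize the general genus-$\gg$ formula already derived above to this setting and then collapse the inner weight-sum using the freeness of the $Z$-action. First I would record the two simplifications particular to $G=\SU(n)$ with $n$ an odd prime: since $l=n-1$ is even the phase factors satisfy $\delta\equiv 1$, and since $\# Z=n$ the overall prefactor is $n^{-2\gg}$. Substituting these into
\[ \ca{Q}(M_{G'}(\Sigma_\gg^1))=\f{1}{\# Z^{2\gg}}
\sum_{c_1,\ldots,c_{2\gg}}
\prod_{i=1}^{2\gg} \phi_i(c_i)
\prod_{j=1}^{\gg} \delta(c_{2j-1},c_{2j})
 \sum_{\stackrel{\lambda\in P_k}{c_i\bullet_k\lambda=\lambda}}
S_{\lambda,0}^{-2\gg}\  \wt{\tau}_\lambda \]
leaves an outer sum over tuples $(c_1,\ldots,c_{2\gg})\in Z^{2\gg}$ weighted only by $\prod_i\phi_i(c_i)$.

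The key step is to split this outer sum according to whether all $c_i=e$. For the tuple with every $c_i=e$ the constraint $c_i\bullet_k\lambda=\lambda$ is vacuous, so the inner sum runs over all of $P_k$ and reproduces $\ca{Q}(M_G(\Sigma_\gg^1))=\sum_{\lambda\in P_k}S_{\lambda,0}^{-2\gg}\wt\tau_\lambda$. For any tuple in which some $c_i\neq e$, I would invoke the stated fact that the $Z$-action on $\Alc$ is free away from its unique fixed point $\zeta_*$: the weight $\lambda_*=\tf{k}{n}\rho$ is always a common fixed point (since $\zeta_*$ is fixed by all of $Z(G)$), and by freeness it is the \emph{only} level-$k$ weight fixed by all the $c_i$. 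It lies in $P_k$ precisely because $k$ is a multiple of $n$, so $\zeta_{\lambda_*}=\zeta_*\in\on{int}(\Alc)$. Hence the caveat ``at least one common fixed point'' is automatic here, and the inner sum collapses to the single term $S_{\lambda_*,0}^{-2\gg}\wt\tau_{\lambda_*}$. Factoring out this common term, the total contribution of the nontrivial tuples is $(x-1)\,S_{\lambda_*,0}^{-2\gg}\wt\tau_{\lambda_*}$, where $x=\prod_{i=1}^{2\gg}\big(\sum_{c\in Z}\phi_i(c)\big)$; orthogonality of characters of $\Z_n$ then gives $x=n^{2\gg}$ when all $\phi_i=1$ and $x=0$ as soon as some $\phi_i$ is nontrivial.

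It remains to rewrite $S_{\lambda_*,0}^{-2\gg}\wt\tau_{\lambda_*}$ in the standard basis. Here I would apply the Lemma $\wt\tau_{\lambda_*}=S_{0,\lambda_*}^2\,\tau_\natural$ together with Kostant's identity $|J(t_*)|^2=\#Z(G)\,h^l$, which yields $S_{0,\lambda_*}^2=|J(t_*)|^2/\#T_{k+h}=(\tf{k}{h}+1)^{-l}$. Using the symmetry of the $S$-matrix, $S_{\lambda_*,0}^{-2\gg}\wt\tau_{\lambda_*}=S_{0,\lambda_*}^{2-2\gg}\tau_\natural=(\tf{k}{n}+1)^{(n-1)(\gg-1)}\tau_\natural$. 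Substituting the two values of $x$ gives the coefficient $n^{2\gg}-1$ when all $\phi_i=1$ and $-1$ when some $\phi_i\neq e$, which are exactly the two asserted formulas. I expect the only genuinely delicate point to be the collapse of the inner sum for nontrivial tuples, i.e.\ confirming that $\lambda_*$ is the unique common fixed point and that it genuinely lies in $P_k$; this is where the primality of $n$ (making the action on $\Alc\setminus\{\zeta_*\}$ free) and the divisibility $n\mid k$ are both used, while everything else is assembly of the established formula, the Lemma, and Kostant's identity.
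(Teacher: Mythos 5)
Your proposal is correct and follows essentially the same route as the paper: specialize the general genus-$\gg$ formula using $\delta\equiv 1$ (from $l=n-1$ even), split the outer sum into the trivial tuple (giving $n^{-2\gg}\ca{Q}(M_G(\Sigma_\gg^1))$) and the nontrivial tuples, where freeness of the $Z$-action on $\Alc\setminus\{\zeta_*\}$ forces the inner sum to collapse to the single weight $\lambda_*=\tfrac{k}{n}\rho$, and then evaluate via character orthogonality, the Lemma $\wt\tau_{\lambda_*}=S_{0,\lambda_*}^2\tau_\natural$, and Kostant's identity $|J(t_*)|^2=\#Z(G)\,h^l$. Your added verifications (that $\zeta_{\lambda_*}=\zeta_*$ so $\lambda_*\in P_k$ exactly when $n\mid k$, and the use of $S$-matrix symmetry in the exponent bookkeeping) match the paper's computation, so there is nothing to correct.
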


For $\mu\in P_k$, the integers $\ca{Q}(M_{G'}(\Sigma_\gg^1,\mu))$ are obtained from this formula as the coefficient of $\tau_\mu$, i.e., 
replacing $\ca{Q}(M_G(\Sigma_\gg^1))$ with 
$\ca{Q}(M_G(\Sigma_\gg^1,\mu))$ and $\tau_\natural$ with $\eps(\mu)$. 
For $\mu=0$ this is exactly the formulas given by Beauville \cite[Proposition 3.3]{be:ve}. 

\begin{remark}
Note the following consequence for  the Verlinde numbers
$N(\mu)=\ca{Q}(M_G(\Sigma_\gg^1,\mu))$ with respect to $G=\SU(n)$: Whenever $n$ is prime and the level $k$ is a multiple of  $n$, we have that 
\[ N(\mu)=\left(\f{k}{n}+1\right)^{(n-1)(\gg-1)}\eps(\mu)\mod n^{2\gg}\]
for all  $\mu\in P_k$. 
\end{remark}

 \begin{appendix}
 
 \section{Prequantization of nonsimply connected manifolds}\label{app:a}
In this section, we discuss the prequantizability of Hamiltonian and quasi-Hamiltonian $G$-spaces with an abelian fundamental group. We begin with some well known results about the prequantizability of closed 2-forms. 

\subsection{Prequantization of closed 2-forms}
Let $M$ be a connected manifold with a closed 2-form $\omega\in \Omega^2(M)$. Following Kostant \cite{ko:qu} and Souriau \cite{so:st}, one defines a  \emph{prequantization} of $(M,\omega)$ to be a Hermitian line bundle $L$ with Hermitian connection $\nabla$ whose Chern form agrees with $\omega$. The quotient of any two prequantum line bundles of $(M,\omega)$ is a Hermitian line bundle with a flat connection; the holonomy map for the connection gives a classification of 
flat bundles by homomorphisms $\pi_1(M)\to \U(1)$, or equivalently by the torsion subgroup of $H^2(M,\Z)$. 

\begin{remark}
Given a Hamiltonian $G$-action on $M$, with an equivariant moment map $\Phi\colon M\to \g^*$ satisfying $\iota(\xi_M)\omega=-\d\l\Phi,\xi\r$, there is a corresponding $G$-equivariant version of this condition. The infinitesimal lift of 
the $G$-action is given by Kostant's formula \cite{ko:qu}. If $G$ is simply connected, this infinitesimal action always 
integrates to a $G$-action on $L$. Hence, to discuss prequantizability of Hamiltonian $G$-spaces with simply connected $G$, 
it suffices to consider the non-equivariant situation. 
\end{remark}

By Chern-Weil theory, a prequantization of $(M,\omega)$ exists if and only if the 2-form is integral, in the sense that its de Rham cohomology class $[\omega]$ lies in the image of the coefficient homomorphism $H^2(M,\Z)\to H^2(M,\R)$. Equivalently, for every closed oriented surface $\Sigma$ and every 
map $f\colon \Sigma\to M$, 
\begin{equation}\label{eq:crit1} \int_\Sigma f^*\omega\in\Z. \end{equation}
It suffices to check this condition on any collection of maps such that the classes $f_*[\Sigma]$ span 
the second homology $H_2(M,\Z)$.  If $M$ is 1-connected, it follows from the Hurewicz theorem that 
$H_2(M,\Z)$ is generated by \emph{spherical} homology classes; hence one only needs to check for all maps $f\colon S^2\to M$ from 2-spheres.
Furthermore, in this case the prequantization is unique up to isomorphism.

 In \cite{ko:qu}, Kostant also discusses the situation where $M$ is connected but not simply connected. Clearly, a necessary condition for prequantizability is that the pull-back of $\omega$ to the universal cover, $\wt{\omega}\in \Omega^2(\wt{M})$, is prequantizable. Letting $\wt{L}$ be the prequantum line bundle, he shows that the  
group of connection   preserving automorphisms of $\wt{L}$ define a $\U(1)$-central extension of the group $\on{Diff}_\omega(\wt{M})$ of  2-form preserving diffeomorphism of $\wt{M}$. Pullback under the inclusion of $\pi_1(M)$ defines a central extension 
 \[ 1\to \U(1)\to \wh{\pi_1(M)}\to \pi_1(M)\to 1;\]
its triviality is necessary and sufficient for $\wt{L}$ to descend to a prequantum line bundle $L$ of $M$. 
 
Let us consider the special case that the fundamental group $\pi_1(M)$ is \emph{abelian}. Recall that equivalence classes of $\U(1)$-central extensions $\wh{A}$ of a finitely generated abelian group $A$ are in 1-1 correspondence with \emph{commutator maps}  
\begin{equation}
q\colon A\times A\to \U(1),
\end{equation}  that is, maps with the properties  $q(a,a)=1$ and 
\[ q(a_1+a_2,a)=q(a_1,a)\,q(a_2,a),\ \ \ \ q(a,a_1+a_2)=q(a,a_1)\,q(a,a_2)\]
for all $a,a_1,a_2\in A$. Given $\wh{A}$, the associated commutator map is given by 
\[ q(a_1,a_2)=\wh{a_1}\wh{a_2}\wh{a_1}^{-1}\wh{a_2}^{-1}\]
where $\wh{a}_i\in \wh{A}$ are lifts of $a_i\in A$. (Given a commutator map $q$, choose generators $a_1,\ldots,a_r$ for $A$, define $c\colon A\times A\to B$ by $c(a_i,a_j)=q(a_i,a_j)$ for $i>j$ and $c(a_i,a_j)=e$ for $i\le j$, 
and extend bi-additively. Then $c$ is a cocycle, and the resulting central extension has commutator map $q$.)

Thus, if $\omega\in\Omega^2(M)$ is a closed 2-form whose pull-back to $\wt{M}$ is prequantizable, then the central extension defined 
by Kostant defines a commutator map for $\pi_1(M)$. We will use the following alternative description of this commutator map, 
directly in terms of $\omega$. 
\begin{proposition}\label{prop:critaA} 
Let $M$ be a connected manifold with abelian fundamental group $\pi_1(M)$, and let $\omega\in \Om^2(M)$ be a closed 2-form whose pull-back to the universal cover 
$\wt{M}$ is integral. 
% satisfying condition \eqref{eq:crit1a} for all $f\colon S^2\to M$.  
\begin{enumerate}
\item 
There is a well-defined commutator map 
$q\colon \pi_1(M)\times \pi_1(M)\to \U(1)$ given by 
\[ q(u,v)=\exp\Big(\tpi \int_{{S^1\times S^1}}f^*\omega\Big).\]
Here $f\colon {S^1\times S^1}\to M$  for given $u,v\in \pi_1(M)$
is any smooth map such that the induced map on fundamental groups 
$f_*\colon \pi_1(S^1\times S^1)=\Z\times \Z\to \pi_1(M)$ satisfies $f_*(k,l)=ku+lv$.
\item The pair $(M,\omega)$ is prequantizable if and only if $q=1$. 
\end{enumerate}
\end{proposition}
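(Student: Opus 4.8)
The plan is to deduce both statements from the integrality criterion \eqref{eq:crit1} together with the Hopf exact sequence
\[ H_2(\wt M,\Z)\to H_2(M,\Z)\to H_2(\pi_1(M),\Z)\to 0, \]
where the first map is induced by the covering $\iota\colon \wt M\to M$ and the second by the classifying map $M\to B\pi_1(M)$. Since $\pi_1(M)$ is abelian, there is a canonical isomorphism $H_2(\pi_1(M),\Z)\cong \wedge^2\pi_1(M)$. For each pair $u,v$ a smooth map $f\colon S^1\times S^1\to M$ with $f_*(k,l)=ku+lv$ exists: on the $1$-skeleton it is prescribed by $u,v$, and it extends over the single $2$-cell because the attaching commutator is sent to $[u,v]=e$; moreover $f_*[S^1\times S^1]\in H_2(M,\Z)$ maps to $u\wedge v$ under the surjection above. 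Thus $H_2(M,\Z)$ is generated by $\on{im}(\iota_*)$ together with these torus classes, all of which are represented by maps of closed oriented surfaces.

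First I would establish \emph{(a)}. Integration defines a homomorphism $\int\om\colon H_2(M,\Z)\to\R$. Any class $\iota_*\ti c$ satisfies $\int_{\iota_*\ti c}\om=\int_{\ti c}\wt\om\in\Z$, because $\wt\om$ is integral; hence $\int\om$ is integer-valued on $\on{im}(\iota_*)$ and descends to a homomorphism $\bar\phi\colon H_2(\pi_1(M),\Z)\to\R/\Z$. Under $H_2(\pi_1(M),\Z)\cong\wedge^2\pi_1(M)$ this is an alternating bilinear form, and, using that $f_*[S^1\times S^1]\mapsto u\wedge v$, one has $q(u,v)=\exp(\tpi\,\bar\phi(u\wedge v))$. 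This yields all three assertions of \emph{(a)} at once: independence of the choice of $f$ (two choices have classes with the same image $u\wedge v$, hence differ by an element of $\on{im}(\iota_*)$, on which $\int\om\in\Z$), together with $q(u,u)=1$ and bimultiplicativity, which are exactly the alternating and bilinear properties of $\bar\phi$. The identity $q(u,u)=1$ can alternatively be seen directly, since a map realizing $(u,u)$ may be taken to factor through $S^1$, where $\om$ pulls back to a $2$-form on a $1$-manifold and hence to $0$.

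For \emph{(b)} I would combine \eqref{eq:crit1} with the generation statement. By the criterion, $(M,\om)$ is prequantizable iff $\int_\Sigma f^*\om\in\Z$ for all closed oriented surfaces, and it suffices to test this on a generating set of $H_2(M,\Z)$. On $\on{im}(\iota_*)$ the integral is automatically integral, as above; on the torus classes, $\int_{S^1\times S^1}f^*\om\in\Z$ if and only if $q(u,v)=\exp\big(\tpi\int_{S^1\times S^1}f^*\om\big)=1$. Hence $\om$ is integral if and only if $q(u,v)=1$ for all $u,v\in\pi_1(M)$, which is the claim. In the language of Kostant's extension $\wh{\pi_1(M)}$, this identifies $q$ with its commutator map and prequantizability of $M$ with the splitting of the extension.

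The main obstacle is topological rather than analytic: one must know that $H_2(M,\Z)$ is generated by $\on{im}(\iota_*)$ together with torus classes — that is, the Hopf exact sequence and the identification of its cokernel with $\wedge^2\pi_1(M)$ — and one must realize each $u\wedge v$ by an honest smooth map of the $2$-torus. Both inputs rely on the hypothesis that $\pi_1(M)$ is abelian, so that the attaching commutator of the $2$-cell dies and $H_2(\pi_1(M),\Z)=\wedge^2\pi_1(M)$. Once these are in place, the remaining steps are formal.
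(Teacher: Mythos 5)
Your proof is correct, but it runs along a genuinely different track from the paper's. The paper argues entirely by hand: existence of $f$ via the square with edge-word $\alpha\beta\alpha^{-1}\beta^{-1}$ (as you also do), well-definedness by deforming two torus maps to agree on the $1$-skeleton and observing that the two extensions differ by a spherical class on which $\omega$ has integer periods, and, for part (b), the statement that $H_2(M,\Z)$ is generated by torus classes, proved by cutting a genus-$\gg$ surface along separating circles $S_j$, noting each $S_j$ maps to a product of commutators in $\pi_1(M)$ and is therefore null-homotopic, and collapsing to a wedge of tori. You replace both of these geometric steps by the Cartan--Leray/Hopf exact sequence $H_2(\wt M,\Z)\to H_2(M,\Z)\to H_2(\pi_1(M),\Z)\to 0$ together with $H_2(A,\Z)\cong \wedge^2 A$ for abelian $A$ (your identification of $\on{im}(\iota_*)$ with the kernel is justified: it contains the spherical classes and is contained in the kernel, which by Hopf \emph{is} the spherical image). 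This buys you two things the paper's argument gets less directly: well-definedness of $q$ falls out because two torus maps with the same $(u,v)$ have homology classes with equal image $u\wedge v$, hence differ by a class from the cover; and bimultiplicativity plus $q(u,u)=1$ are automatic from $q=\exp(\tpi\,\bar\phi(u\wedge v))$ with $\bar\phi$ linear on $\wedge^2\pi_1(M)$, whereas the paper's ``clear from the definition'' actually hides a small concatenation argument. What the paper's route buys in exchange is self-containedness: it uses only explicit homotopies and the surface-decomposition trick, with no group homology, which matches the elementary tone of the appendix. One point worth making explicit in your write-up: your isomorphism $H_2(A,\Z)\cong\wedge^2 A$ holds for arbitrary abelian groups, so no finite-generation hypothesis on $\pi_1(M)$ is needed at this step, and your appeal to the paper's remark that \eqref{eq:crit1} need only be tested on a spanning set of $H_2(M,\Z)$ is exactly the reduction the paper itself invokes.
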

\begin{proof}
(a) To show that $f$ with the desired properties exists, let us regard  the 2-torus ${S^1\times S^1}$ as obtained from a square $[0,1]^2$, by gluing the sides according to the pattern 
$\alpha,\beta,\alpha^{-1},\beta^{-1}$. Given $u,v\in \pi_1(M)$, choose based loops 
$\gamma_u,\gamma_v\in  C^\infty(S^1, M)$ 
taking the generator of $\pi_1(S^1)=\Z$ to $u,v$, respectively. This defines a map $f_0$ from the boundary  
of the square to $M$, given as the concatenation of loops 
$\gamma_u\,\gamma_v\,\gamma_u^{-1}\,\gamma_v^{-1}$. Since $\pi_1(M)$ is abelian, this loop is contractible, hence $f_0$ extends to a continuous 
map from the square, defining a continuous map $f\colon {S^1\times S^1}\to M$ with the desired property on $\pi_1(M)$. 
By standard techniques, it can be deformed into a smooth map. 

Given two maps $f,f'\colon {S^1\times S^1}\to M$ inducing the same map on $\pi_1$, we can smoothly deform to arrange that $f_0=f_0'$. 
The maps from $f,f'$ correspond to two different extensions to the square $[0,1]^2$. Since $\omega$ takes on integer values on spherical homology classes, the resulting integrals $\int_{{S^1\times S^1}}f^*\omega,\ \int_{{S^1\times S^1}}(f')^*\omega$ differ by an integer. 
It is clear from the definition that the map $q$ is a commutator map. 

(b) Since $\pi_1(M)$ is abelian, the homology group $H_2(M,\Z)$ is generated by images of 2-tori. (Regard a 
surface $\Sigma$ of genus $g$ as a connected sum of 2-tori, glued along circles $S_j\subset \Sigma$. Then $S_j$ 
represents an element of $[\pi_1(\Sigma),\pi_1(\Sigma)]$, hence its image under a map $f\colon \Sigma \to M$ is contractible. 
Choosing such retractions, we have deformed $f$ into a map sending each $S_j$ to a point, thus effectively to a map from a wedge of 
2-tori.) Hence, to check integrality of $\omega$ it suffices to verify \eqref{eq:crit1} for 2-tori, which is precisely the condition that $q=1$. \qedhere
\end{proof}

\subsection{Integrality of closed relative forms}
To describe the prequantizability of quasi-Hamiltonian $G$-spaces, we need an extension of this discussion to \emph{relative} forms. 
Suppose $M,N$ are manifold, $\Phi\colon M\to N$ is a smooth map, and $(\omega,\eta)\in \Omega^3(\Phi)=\Omega^2(M)\oplus \Omega^3(N)$ relatively closed, that is, 
\[ \d\omega=-\Phi^*\eta,\ \ \d\eta=0.\]
The class of $(\omega,\eta)$ in relative cohomology is integral if its natural pairing with all classes in $H_3(\Phi,\Z)$ is integral. 
If $N$ is 2-connected, this condition means that for every map $f\colon \Sigma\to M$ from a compact oriented 
surface, and every smooth homotopy $h\colon \Sigma\times [0,1]\to N$ between 
$h_1=\Phi\circ f$ and a constant map $h_0$ (where we write $h_t=h(\cdot,t)$), 
we have that  
\begin{equation}\label{eq:cond3a}
 \int_\Sigma f^*\omega +\int_{\Sigma\times [0,1]} h^*\eta\in\Z.\end{equation}
(Observe that by Stokes' theorem, the left-hand side is invariant under homotopies of the pair of maps $f$ and $h$.)  Applying this criterion  to a constant map $f$, we see
in particular that the closed 3-form $\eta$ must be integral. Using again that 
$H_2(M,\Z)$ is generated by images of 2-spheres (resp., of 2-tori) if $M$ is 1-connected (resp., $\pi_1(M)$ is abelian), we have: 
\begin{lemma}
Let $\Phi\colon M\to N$ be a smooth map, where $N$ is 2-connected, 
and  $(\omega,\eta)\in \Omega^3(\Phi)$ a closed relative 3-form such that $\eta$ is integral.  
\begin{enumerate}
\item If $M$ is 2-connected, then $(\omega,\eta)$ is integral. 
\item If $M$ is 1-connected, then $(\omega,\eta)$ is integral if and only if 
\eqref{eq:cond3a} holds for all maps from a 2-sphere $\Sigma=S^2$. 
\item If $M$ is connected and $\pi_1(M)$ is abelian, then $(\omega,\eta)$ is integral if and only if \eqref{eq:cond3a} holds for all maps from a 2-torus $\Sigma=S^1\times S^1$.
\end{enumerate} 
\end{lemma}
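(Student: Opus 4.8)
The plan is to characterize integrality of $(\omega,\eta)$ by pairing with relative $3$-cycles, and then to whittle the relevant cycles down to the surfaces listed in each case. First I would recall that a relative $3$-cycle for $\Phi$ is represented geometrically by a pair $(f,h)$, where $f\colon\Sigma\to M$ is a map of a closed oriented surface and $h\colon\Sigma\times[0,1]\to N$ is a homotopy from $\Phi\circ f$ to a constant map; the natural pairing of $[(\omega,\eta)]$ with such a cycle is exactly the left-hand side of \eqref{eq:cond3a}. Since $N$ is $2$-connected, every map $\Phi\circ f\colon\Sigma\to N$ is null-homotopic, so such an $h$ always exists, and any two choices of $h$ differ by a closed $3$-cycle in $N$; integrality of $\eta$ then shows that the value of \eqref{eq:cond3a} is well defined modulo $\Z$, independently of $h$ (as already noted via Stokes' theorem after \eqref{eq:cond3a}). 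The forward implications in (b) and (c) are then immediate, since the listed surfaces furnish particular relative cycles; the substance is the converse, which the reduction below supplies.

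The homological bookkeeping is the heart of the argument. I would invoke the long exact sequence of the map $\Phi$ in integral homology,
\[ H_3(M,\Z)\xrightarrow{\ \Phi_*\ }H_3(N,\Z)\to H_3(\Phi,\Z)\xrightarrow{\ \partial\ }H_2(M,\Z)\xrightarrow{\ \Phi_*\ }H_2(N,\Z). \]
Because $N$ is $2$-connected we have $H_2(N,\Z)=0$, so $\partial$ is surjective, with kernel the image of $H_3(N,\Z)$. Thus $H_3(\Phi,\Z)$ is generated by (i) classes coming from $3$-cycles lying entirely in $N$, together with (ii) lifts of any generating set of $H_2(M,\Z)$. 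On classes of type (i) the pairing is $\int\eta$ over a closed $3$-cycle in $N$, which is an integer since $\eta$ is integral. Consequently $(\omega,\eta)$ is integral if and only if it pairs integrally with some lifts of generators of $H_2(M,\Z)$; and a lift of the class $f_*[\Sigma]$ is provided precisely by the cycle $(f,h)$ above, whose pairing is \eqref{eq:cond3a}. This reduces the whole problem to testing \eqref{eq:cond3a} on maps $f$ representing a generating set of $H_2(M,\Z)$.

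It remains to identify such generating sets, exactly as in the absolute case of Proposition \ref{prop:critaA}. In case (a), $M$ is $2$-connected, so $H_2(M,\Z)=0$ and there is nothing to check: $(\omega,\eta)$ is automatically integral. In case (b), $M$ is $1$-connected, so by the Hurewicz theorem $H_2(M,\Z)$ is generated by spherical classes, i.e.\ by images of maps $f\colon S^2\to M$; hence it suffices to verify \eqref{eq:cond3a} for $\Sigma=S^2$. In case (c), $\pi_1(M)$ is abelian, and the argument from the proof of Proposition \ref{prop:critaA}(b) shows that $H_2(M,\Z)$ is generated by images of $2$-tori (a genus-$g$ surface is a connected sum of tori glued along circles lying in the commutator subgroup, hence null-homotopic in $M$); so it suffices to test \eqref{eq:cond3a} for $\Sigma=S^1\times S^1$.

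The only genuinely delicate point is the homological reduction in the second paragraph: one must be certain that lifts of $H_2(M,\Z)$-generators, together with cycles supported in $N$, really do generate $H_3(\Phi,\Z)$, and that the $\eta$-contribution of the latter is integral. Both follow cleanly from the $2$-connectedness of $N$ (which kills $H_2(N,\Z)$ and makes $\partial$ surjective) and from integrality of $\eta$; once this is in place, the three cases are immediate consequences of the standard generation statements for $H_2(M,\Z)$ already used in Proposition \ref{prop:critaA}.
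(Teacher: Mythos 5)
Your proof is correct and follows essentially the same route as the paper, which derives the lemma from the preceding discussion: integrality is tested by pairing with $H_3(\Phi,\Z)$, the $2$-connectedness of $N$ converts relative cycles into pairs $(f,h)$ with pairing \eqref{eq:cond3a}, and the generation of $H_2(M,\Z)$ by spheres (Hurewicz) resp.\ tori (the connected-sum argument from Proposition \ref{prop:critaA}) finishes each case. Your second paragraph simply makes explicit, via the long exact sequence of the map $\Phi$, the homological bookkeeping that the paper leaves implicit; this is a welcome clarification but not a different argument.
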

 
%
%\begin{equation}\label{eq:cond3b}\int_{S^2} f^*\omega +\int_{S^2\times [0,1]} h^*\eta\in\Z;\end{equation}for general $M$ it signifies the integrality of $(\wt{\omega},\eta)\in \Omega^3(\wt{\Phi})$ for the lift of $\Phi$ to the universal cover $\wt{M}$.

Repeating the argument from the proof of Proposition \ref{prop:critaA}, we obtain:

\begin{proposition}\label{prop:prequant1}
Let $M$ be a connected manifold with an abelian fundamental group $\pi_1(M)$, and $\Phi\colon M\to N$ a smooth map into a 
2-connected manifold $N$. Suppose that $(\omega,\eta)\in \Omega^3(\Phi)$ is a relative cocycle whose pull-back $(\wt{\omega},\eta)\in\Omega^3(\wt{\Phi})$ to the universal cover $\wt{M}$ is integral.
% satisfying the condition \eqref{eq:cond3} for all maps from 2-spheres. (Equivalently, its lift $(\wt{\omega},\eta)\in\Omega^3(\wt{\Phi})$ to the universal cover $\wt{M}$ is integral.) 
\begin{enumerate}
\item
There is a well-defined commutator map $q\colon \pi_1(M)\times \pi_1(M)\to \U(1)$ given by 
\[ q(u,v)= \exp\Big(\tpi \big(\int_{{S^1\times S^1}} f^*\omega +\int_{{S^1\times S^1}\times [0,1]} h^*\eta\big)\Big).\]
Here $f\colon S^1\times S^1\to M$ is any smooth map such that the induced map on fundamental groups takes the generators to $u,v$ respectively, and $h$ 
%$h\colon S^1\times S^1\times [0,1]\to N$%
 is a smooth homotopy between $h_1=\Phi\circ f$ and 
a constant map $h_0$. 
\item The relative cocycle $(\omega,\eta)$ is integral if and only if $q=1$. 
\end{enumerate}
\end{proposition}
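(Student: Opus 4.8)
The plan is to repeat the proof of Proposition~\ref{prop:critaA}, upgrading every step from closed $2$-forms on $M$ to relative cocycles $(\omega,\eta)\in\Omega^3(\Phi)$. For part (a), the map $f\colon S^1\times S^1\to M$ realizing prescribed $u,v\in\pi_1(M)$ is produced exactly as before: present $S^1\times S^1$ as a square with boundary word $\alpha\beta\alpha^{-1}\beta^{-1}$, send the boundary to the concatenation $\gamma_u\gamma_v\gamma_u^{-1}\gamma_v^{-1}$ of representing loops, use that this loop is contractible (because $\pi_1(M)$ is abelian) to extend over the square, and smooth the result. The genuinely new ingredient is the homotopy $h$. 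Since $N$ is $2$-connected and $S^1\times S^1$ is a $2$-complex, the map $h_1=\Phi\circ f$ is null-homotopic (the obstructions to null-homotopy lie in $H^i(S^1\times S^1;\pi_i(N))$ for $i\le 2$, and these groups vanish), so a smooth homotopy $h$ to a constant map exists.

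The heart of the argument is that $q(u,v)$ is well defined, i.e.\ independent of $(f,h)$ modulo $\Z$ in the exponent. I would read $\int_{S^1\times S^1}f^*\omega+\int_{S^1\times S^1\times[0,1]}h^*\eta$ as the pairing of $(\omega,\eta)$ with the relative $3$-cycle $z_{f,h}=(f_*[S^1\times S^1],\,h_*([S^1\times S^1]\times[0,1]))\in H_3(\Phi,\Z)$; note $f_*[S^1\times S^1]$ is a $2$-cycle in $M$ and its image in $N$ is capped off by $h$ because $h_0$ is constant. Given a second choice $(f',h')$ with the same $(u,v)$, deform so the two boundary maps agree; then $f,f'$ are two extensions over the square, and gluing the squares along their common boundary gives $F\colon S^2\to M$ whose fundamental class realizes $f_*[S^1\times S^1]-f'_*[S^1\times S^1]$. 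Thus $z_{f,h}-z_{f',h'}$ is represented by a relative cycle whose $M$-component is \emph{spherical}. Because $S^2$ is simply connected, $F$ lifts to $\wt{F}\colon S^2\to\wt{M}$, and the entire difference cycle lifts to a relative cycle for $\wt{\Phi}\colon\wt{M}\to N$ (the $N$-component is unchanged). Since $(\wt{\omega},\eta)$ is integral by hypothesis, the pairing over the lifted cycle is an integer, so $\langle(\omega,\eta),z_{f,h}\rangle$ and $\langle(\omega,\eta),z_{f',h'}\rangle$ differ by an integer. Independence from $h$ alone is the special case $f=f'$: gluing the two homotopies along $\Phi\circ f$ yields a singular $3$-cycle in $N$ on which the integral $3$-form $\eta$ integrates to an integer. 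The identities $q(u,u)=1$ and bi-multiplicativity of $q$ then follow formally from the geometric construction, as in Proposition~\ref{prop:critaA}.

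For part (b), the forward implication is immediate: if $(\omega,\eta)$ is integral then every pairing $\langle(\omega,\eta),z_{f,h}\rangle$ lies in $\Z$, so $q\equiv 1$. For the converse I would invoke the preceding Lemma: since $\pi_1(M)$ is abelian, integrality of $(\omega,\eta)$ is equivalent to the condition \eqref{eq:cond3a} holding for all maps from a $2$-torus. By the well-definedness from part (a), the value of \eqref{eq:cond3a} for an arbitrary torus map $f$ depends only on the induced pair $(u,v)=(f_*[\alpha],f_*[\beta])$ and equals $q(u,v)$ modulo $\Z$; hence \eqref{eq:cond3a} holds for every torus precisely when $q\equiv 1$.

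I expect the main obstacle to be the bookkeeping in the well-definedness step: identifying the difference of the two relative cycles with one whose $M$-component is spherical, lifting it consistently to $\wt{M}$ while leaving the $N$-component in place, and checking that the $f$-dependence is controlled by integrality of $(\wt{\omega},\eta)$ and the $h$-dependence by integrality of $\eta$.
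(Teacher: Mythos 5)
Your proposal is correct and follows essentially the same route as the paper, which proves this proposition simply by ``repeating the argument from the proof of Proposition~\ref{prop:critaA}''; your write-up faithfully carries out that repetition, adding the relative-cocycle bookkeeping (existence of the homotopy $h$ via $2$-connectedness of $N$, spherical differences lifting to $\wt{M}$, $h$-independence via integrality of $\eta$, and the converse in (b) via the preceding Lemma) that the paper leaves implicit. The one point you use tacitly --- that $\eta$ itself is integral, as required both for $h$-independence and to invoke the Lemma --- does follow from integrality of $(\wt{\omega},\eta)$ by pairing with relative cycles having trivial $M$-component (equivalently, taking $f$ constant), exactly as the paper notes just before the Lemma.
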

The following observations sometimes simplify the use of this criterion:
\begin{remark}
(a) 
The condition that $h_0$ be a constant map can be weakened. For example, it is enough to require that $h_0$ takes values in a 1-dimensional submanifold. 

(b) If $f$ can be chosen in such a way that $\Phi\circ f$ is constant, then one can take $h$ to be 
the trivial extension to ${S^1\times S^1}\times [0,1]$, and the second integral disappears. 
\end{remark}

\section{On the action of $Z$ on $P_k$}\label{app:b}
Let $G$ be compact, simple and simply connected,  $Z\subset Z(G)$, and $k$ a multiple of the basic level $k_0$. In this section we will prove the following fact: 

\begin{lemma} 
Suppose $c\in Z$ is an element whose action on $P_k$ has a fixed point, and 
$\zeta\in \Lambda_Z$ with $\exp\zeta=c$. 
Then 
\begin{equation}\label{eq:toshow}
 k\ \zeta\cdot P^\vee\subset \Z.
\end{equation}
\end{lemma}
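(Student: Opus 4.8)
The plan is to prove the equivalent statement $k\,\zeta_c\cdot P^\vee\subset\Z$, where $\zeta_c\in P^\vee$ denotes the vertex of $\Alc$ with $\exp_T(\zeta_c)=c$, and then transfer this back to $\zeta$. As in Section~\ref{sec:prel}, I identify $\t^*\cong\t$ by means of the basic inner product, so that $\lambda,\rho$ are regarded as elements of $\t$ and $\lambda\cdot\xi=\l\lambda,\xi\r$ for all $\xi\in\t$.

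First I would carry out the reduction to $\zeta_c$. Both $\zeta$ and $\zeta_c$ lie in $\Lambda_Z$ and exponentiate to $c$, so $\zeta-\zeta_c\in\exp_T^{-1}(e)=Q^\vee$; by \eqref{eq:basicfact} this gives $(\zeta-\zeta_c)\cdot P^\vee\subset\Z$, and hence $k\,\zeta\cdot P^\vee\subset\Z$ is equivalent to $k\,\zeta_c\cdot P^\vee\subset\Z$.

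The core step is to obtain a clean closed formula for $k\,\zeta_c$. The key point is that $\lambda$ is not the only available fixed point: the weight $\rho$ is fixed under the level-$\cox$ action (being the unique weight in $\cox\,\on{int}(\Alc)$), so $\zeta_0=\rho/\cox$ is, like $\zeta_\lambda$, a fixed point of the affine action \eqref{eq:affineaction} of $c$. For any affine-fixed $\xi$ one has $w_c(\xi-\zeta_{c^{-1}})=\xi$, hence $(1-w_c)\xi=-w_c\zeta_{c^{-1}}=\zeta_c$, the right-hand side being the image of $0$ under the action of $c$ and therefore independent of $\xi$. Applying this to $\zeta_\lambda=(\lambda+\rho)/(k+\cox)$ (which is fixed because $c\bullet_k\lambda=\lambda$, by Lemma~\ref{lem:equivariant}) and to $\zeta_0=\rho/\cox$ yields
\[ (1-w_c)(\lambda+\rho)=(k+\cox)\,\zeta_c,\qquad (1-w_c)\rho=\cox\,\zeta_c.\]
Subtracting the second from the first cancels both $\rho$ and the shift $\cox$, leaving the identity $k\,\zeta_c=(1-w_c)\lambda$.

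Finally I would pair this identity with an arbitrary $\eta\in P^\vee$. Transporting $w_c$ across the $W$-invariant form and using that $(1-w_c^{-1})\eta\in Q^\vee$ (since each $(1-s_i)\eta=\l\alpha_i,\eta\r\,\alpha_i^\vee$ lies in $Q^\vee$), I get
\[ k\,\zeta_c\cdot\eta=\big((1-w_c)\lambda\big)\cdot\eta=\lambda\cdot\big((1-w_c^{-1})\eta\big)=\l\lambda,(1-w_c^{-1})\eta\r\in\Z,\]
because $\lambda\in P=\Hom(Q^\vee,\Z)$. This proves $k\,\zeta_c\cdot P^\vee\subset\Z$, and with the reduction above the lemma follows. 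I expect the only real obstacle to be the level shift: the fixed point $\lambda$ by itself delivers merely $(k+\cox)\,\zeta_c\cdot P^\vee\subset\Z$, and the crux is to remove the spurious $\cox$ by subtracting the contribution of the universally-fixed weight $\rho$. This makes the argument uniform and bypasses the case-by-case verification.
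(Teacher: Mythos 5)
Your proof is correct, and it takes a genuinely different route from the paper's. The paper treats the case $k_1=k_0$ as trivial (by the definition of $k_1$) and then verifies the remaining cases $k_1>k_0$ one by one --- $D_l$ with the three $\Z_2$-subgroups and $A_l$ with $Z=\Z_m$ --- using Toledano-Laredo's explicit coordinate descriptions of the action of $Z$ on $P_k$. You instead extract from an arbitrary fixed point $\lambda$ the uniform identity $k\,\zeta_c=(1-w_c)\lambda$ and conclude by pairing with $P^\vee$, via $(1-w_c^{-1})P^\vee\subset Q^\vee$ and $P=\Hom(Q^\vee,\Z)$. All your steps check out: $\zeta-\zeta_c\in Q^\vee$ handles the independence of the lift; $-w_c\zeta_{c^{-1}}=\zeta_c$ is indeed the image of $0$ under the affine action \eqref{eq:affineaction}, since left translation by $c$ sends the class of $e$ to the class of $c$; $\rho/\cox=\zeta_*$ is affine-fixed exactly as you say; and $(1-w)P^\vee\subset Q^\vee$ follows from the simple-reflection computation by the telescoping identity $(1-ww')=(1-w)+w(1-w')$ (equivalently, $W$ acts trivially on $P^\vee/Q^\vee$), a step you should perhaps make explicit. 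Two further remarks. First, your detour through $\zeta_\lambda$ and the universally fixed weight $\rho$ is unnecessary: the key identity drops out in one line from the definition \eqref{eq:actiononevelkweights}, since $\lambda=w_c(\lambda-k\zeta_{c^{-1}})$ gives $(1-w_c)\lambda=-k\,w_c\zeta_{c^{-1}}=k\,\zeta_c$; so the ``level shift'' you single out as the crux never actually arises. Second, a fair comparison of what each approach buys: the paper's case-by-case computation proves, in the cases $k_1>k_0$, the stronger assertion that the fixed-point set of $c$ on $P_k$ is \emph{empty} unless $k$ is a multiple of $k_1$, and it is this emptiness (not just the lemma) that is reused in the remarks of Appendix \ref{app:b} and the main text on trivial stabilizers and vanishing correction terms; your argument does not yield those statements, but in exchange it is shorter, type-uniform (it needs neither the tables nor even the standing hypothesis $k_0\mid k$), and immune to the transcription errors that coordinate verifications invite.
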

Note that since $Q^\vee\cdot P^\vee\subset \Z$, it suffices to check the condition for \emph{any} $\zeta$ exponentiating to $c$.
\begin{proof}
If $k_1=k_0$, there is nothing to prove (by definition of $k_1$). We examine  the cases where $k_1>k_0$, using the explicit description of the action of $Z(G)$ on level $k$ weights, as described in \cite{to:po}. 

(1) Let $G=D_l$, with the standard identification $\t\cong \t^*\cong \R^l$. 
The lattice $P$ consists of all $\sum_i \mu_i e_i$  with $\mu_i\in \hh\Z$, such that $2\mu_1,\ldots,2\mu_l$ are all either even or all odd. We will show that if 
$k$ is a multiple of $k_0$ but not of $k_1$, then the action of $c\in Z$ 
on $P_k$ has no fixed points. 

(1a) Let  $Z$ be generated by $c=\exp(\varpi_1^\vee)$, so that $k_0=1,\ k_1=2$. 
We claim that unless $k$ is even (and hence a multiple of $k_1$), the fixed point set of $c$ on $P_k$ is empty. Indeed, this action is given by 
\[ \mu=(\mu_1,\ldots,\mu_l)\mapsto (k-\mu_1,\mu_2,\ldots,\mu_{l-1},-\mu_l), 
\]
hence $\mu$ is fixed  if and only if $2\mu_1=k$ and $\mu_l=0$. 
In particular, $2\mu_l=0$ is even, $2\mu_1$ must be even (by the description of $P$). 
Hence, there is no solution when $k$ is odd. 

(1b) Suppose $l$ is divisible by $4$, and 
$Z$ is generated by $c=\exp(\varpi_{l-1}^\vee)$. Again, $k_0=1,\ k_1=2$. This time, the action on level $k$ weights reads as 
\[  (\mu_1,\ldots,\mu_l)\mapsto \left(\f{k}{2}+\mu_l,\f{k}{2}-\mu_{l-1},\ldots,
\f{k}{2}-\mu_2,\f{k}{2}-\mu_1\right).\]
The fixed point condition  reads as $\mu_1=\f{k}{2}+\mu_l,\ldots$. Again, since the integers $2\mu_i$ must have the same parity, there is no solution when $k$ is odd. 

(1c) Suppose $l$ is divisible by $4$, and $Z$ is generated by $c=\exp(\varpi_l^\vee)$, hence $k_0=1,\ k_1=2$. The action on level $k$ weights reads as 
\[  (\mu_1,\ldots,\mu_l)\mapsto \left(\f{k}{2}-\mu_l,\f{k}{2}-\mu_{l-1},\ldots,
\f{k}{2}-\mu_2,\f{k}{2}-\mu_1\right);\]
the same argument as in (b) shows that there are no fixed points when $k$ is odd. 

(2) Consider next $G=A_l$. The weight lattice $P$ is identified with the quotient of $\Z^{l+1}$ by the rank one  sublattice  $\Z(e_1+\ldots+e_{l+1})$. The center $Z(G)$ is generated by $c_0=\exp(\varpi_1^\vee)$, using the labeling of coweights as in Bourbaki \cite{bou:li2}. The 
action of the generator $c_0$ on  $P_k$  is given by 
\begin{equation}\label{eq:act} (\mu_1,\ldots,\mu_{l+1})\mapsto (k+\mu_{l+1},\mu_1,\ldots,\mu_{l}) \mod \Z(e_1+\ldots+e_{l+1}).\end{equation}
The action of some power $c_0^N$ on $P_{k}$ involves $N$ such shifts. Note that the sum of coefficients $\mu_1+\ldots+\mu_{l+1}$ of the expression in parentheses changes by $Nk$, while the sum of coefficients of $e_1+\ldots+e_{l+1}$ is $l+1$. Hence, if the equivalence class of $(\mu_1,\ldots,\mu_{l+1})$ is a fixed point for the action of $c_0^N$, then $Nk$ must be a multiple of 
$l+1$. With these initial observations, 
let $Z=\Z_m\subset Z(G)=\Z_{l+1}$ where $m$ divides $l+1$.  Recall that $k_0$ is the smallest natural number such that $k_0 (l+1)$ is a multiple of $m^2$, while $k_1=m$.
Put $n=(l+1)/m$, so that $Z$ is generated by $c_0^n=\exp(n\varpi_1^\vee)$. By the above, the fixed point set of $c:=c_0^{nr}\in Z$ on $P_k$ is empty unless $nrk$ is a multiple of $l+1$, hence suppose that this is the case. The element $\zeta=nr\varpi_1\in \Lambda_Z$ exponentiates to $c^{nr}$, and we obtain 
\[ k \zeta\cdot P^\vee= knr \varpi_1^\vee\cdot P^\vee
\subset (l+1) \ \varpi_1^\vee \cdot P^\vee\subset \Z,\] 
as desired. 
\end{proof}
\begin{remark}
The discussion for $G=A_l=\SU(l+1)$ shows in particular the following: Suppose $m$ is a prime number such that $m^2$ divides $l+1$, and let $Z=\Z_m$ so that $k_0=1$. 
Then, unless $k$ is a multiple of $m$, the action of $Z$ on $P_k$ has trivial stabilizers. 
\end{remark}

\section{Coxeter transformations} \label{app:c}
In this section, we carry out several case-by-case calculations of the phase factors $\delta(c_1,c_2)$. Since $\delta$ is multiplicative in both entries, it suffices to compute it on generators. Our calculations will follow the tables in Bourbaki \cite{bou:li2}, and we will largely use the conventions given there, e.g., for enumeration of simple roots and fundamental weights.  We will take the Coxeter element as the product of simple reflections in the order $w_*=s_l\cdots\cdots s_1$.

\begin{enumerate}
\item Let $G=A_l$, with its standard realization of $\t$ realized as the subspace of $\R^{l+1}$ where the coordinates add to zero, and $Q=Q^\vee=\t\cap \Z^{l+1}$. 
The center is generated by $c_0=\exp(\varpi_1^\vee)$, with the standard enumeration of the fundamental (co)weights as in \cite{bou:li2}. The Coxeter element $w_*$ acts by cyclic permutation, $e_i\mapsto e_{i+1}$ for $i<l+1$ and $e_{l+1}\mapsto e_1$. One finds that $\zeta_*=\f{1}{\cox}\rho$ 
satisfies $w_* \zeta_*=\zeta_*-\varpi_1$; hence 
\[ (1-w_*)^{-1}\varpi_1=\f{1}{l+1} \rho=\f{1}{l+1}(\f{l}{2}\alpha_1+\ldots+\f{i}{2}(l-i+1)\alpha_i+\ldots+\f{l}{2}\alpha_l).\]
Consequently, 
\[ (1-w_*)^{-1}\varpi_1^\vee\cdot \varpi_1^\vee=\f{l}{2(l+1)}.\] 
If $Z=Z(G)$, and if $k$ is a multiple of $k_0=l+1$, this gives
\[ \delta(c_0,c_0)=(-1)^{l\f{k}{l+1}}.\] 
More generally, if $m$ divides $l+1$, with quotient $(l+1)/m=n$, and 
$Z=\Z_m$ generated by $c_1=c_0^n$, and if $k$ is a multiple of $k_0$ so that 
$k(l+1)$ is a multiple of $m^2$, we obtain: 
\[ \delta(c_1,c_1)
=\exp(\tpi \f{k l (l+1)}{m^2})=(-1)^{l \f{k(l+1)}{m^2}}.\]

\item Let $G=B_l$, with the standard realization of the root system in $\t=\R^l$ such that $Q=\Z^l$, and $Q^\vee\subset \Z^l$ are the integral points whose coefficient sum is even. The center $Z(G)=\Z_2$ is generated by the element $c_0=\exp(\varpi_1^\vee)$ with $\varpi_1^\vee=e_1$, and the Coxeter transformation 
  $w_*$ is given by   $e_i\mapsto e_{i+1}$ for $i<l$ and $e_l\mapsto -e_1$. One finds  $(1-w_*)^{-1}\varpi_1^\vee=\hh (e_1+\ldots+e_l)$, hence 
 \[ (1-w_*)^{-1}\varpi_1^\vee\cdot \varpi_1^\vee=\hh.\] 
It follows that for any level $k$,  
\[ \delta(c_0,c_0)=(-1)^{k}.\]

\item Let $G=C_l$. The root system is obtained from that of $B_l$ by interchanging the roles of $Q$ and $Q^\vee$; with the same Weyl group and the same Coxeter element. Note however that the basic inner product for $C_l$ is twice of that of $B_l$, and 
$\varpi_1^\vee=e_1$. We obtain,  
\[ (1-w_*)^{-1}\varpi_1^\vee\cdot \varpi_1^\vee=1.\]
It follows that $\delta(c_0,c_0)=1$ for all $k$.

\item Let $G=D_l$, with $\t\cong \R^l$. The lattice $Q=Q^\vee$ consists of elements of 
$\Z^l$ with even coefficient sum, while $P=P^\vee$ is spanned by $\Z^l$ together with 
$\varpi_l^\vee=\hh(e_1+\ldots+e_l)$. The Coxeter element $w_*$ acts by 
\[ e_1\mapsto e_2,\ldots,e_{l-2}\mapsto e_{l-1},\ e_{l-1}\mapsto -e_1,\ e_l\mapsto -e_l.\]
If $l$ is odd, then $Z=\Z_4$ is generated by $c_0=\exp(\varpi_l)$. One computes
\[ (1-w_*)^{-1}\varpi_l=\f{1}{2}(e_1+2e_2+\ldots+(l-1)e_{l-1})+\f{1}{4}e_l
-\f{l-1}{4}(e_1+\ldots+e_{l-1}),\]
and hence 
\[ (1-w_*)^{-1}\varpi_l^\vee\cdot \varpi_l^\vee=
\f{1}{8}(-l^2+3l+1).
\]
Hence, for $Z=\Z_4$ and $k$ a multiple of $k_0=4$, 
\[ \delta(c_0,c_0)=\exp(\tpi \f{k}{8}(-l^2+3l+1))
=(-1)^{\f{k}{4}(-l^2+3l+1)}
=(-1)^{\f{k}{4}}
\]
(since $-l^2+3l$ is even). If $Z=\Z_2$, generated by $c_1=c_0^2$, one has $k_0=1$, and $\delta(c_1,c_1)=(-1)^k$.

If $l$ is even, then the center if generated by $c_0=\exp(\varpi_l^\vee)$ and 
$c_0'=\exp(\varpi_1^\vee)$ with $\varpi_1^\vee=e_1$. The expression for 
$(1-w_*)^{-1}\varpi_l^\vee$  is as before, while 
$(1-w_*)^{-1}\varpi_1^\vee=\hh(e_1+\ldots+e_{l-1})$. Hence 
\[ (1-w_*)^{-1}\varpi_l^\vee\cdot \varpi_1^\vee=\f{3-l}{4},\ \ 
 (1-w_*)^{-1}\varpi_1^\vee\cdot \varpi_1^\vee=\hh.
\]
Hence, if $Z=Z(G)=\Z_2\times \Z_2$, and $k$ a multiple of $k_0=2$, 
\[ \delta(c_0,c_0)=i^{\f{k}{2}}(-1)^{\f{kl}{4}},\ \ 
\delta(c_0,c_0')=(-i)^k,\ \ 
\delta(c_0',c_0')=1.\]
\end{enumerate} 

 \end{appendix}

\bibliographystyle{amsplain}
%\bibliography{../../../Biblio/ref} 
%\bibliography{../../Biblio/ref} 
 \def\cprime{$'$} \def\polhk#1{\setbox0=\hbox{#1}{\ooalign{\hidewidth
  \lower1.5ex\hbox{`}\hidewidth\crcr\unhbox0}}} \def\cprime{$'$}
  \def\cprime{$'$} \def\cprime{$'$} \def\cprime{$'$} \def\cprime{$'$}
  \def\polhk#1{\setbox0=\hbox{#1}{\ooalign{\hidewidth
  \lower1.5ex\hbox{`}\hidewidth\crcr\unhbox0}}} \def\cprime{$'$}
  \def\cprime{$'$} \def\cprime{$'$} \def\cprime{$'$} \def\cprime{$'$}
\providecommand{\bysame}{\leavevmode\hbox to3em{\hrulefill}\thinspace}
\providecommand{\MR}{\relax\ifhmode\unskip\space\fi MR }
% \MRhref is called by the amsart/book/proc definition of \MR.
\providecommand{\MRhref}[2]{%
  \href{http://www.ams.org/mathscinet-getitem?mr=#1}{#2}
}
\providecommand{\href}[2]{#2}

\end{document}